\newtheorem{theorem}{Theorem}[section]
\newtheorem*{theorem*}{Theorem}
\newtheorem{lemma}[theorem]{Lemma}
\newtheorem{proposition}[theorem]{Proposition}
\newtheorem{corollary}[theorem]{Corollary}
\newtheorem*{conjecture*}{Conjecture}
\newtheorem{remark}[theorem]{Remark}
\newtheorem{definition}[theorem]{Definition}
\newcommand{\opname}[1]{\operatorname{\mathsf{#1}}}
\renewcommand{\mod}{\opname{mod}\nolimits}
\newcommand{\add}{\opname{add}\nolimits}
\newcommand{\der}{\md}
\newcommand{\rank}{\opname{rank}\nolimits}
\newcommand{\ind}{\opname{ind}}
\newcommand{\lcm}{\opname{lcm}}
\newcommand{\X}{\mathbb{X}}
\newcommand{\Z}{\mathbb{Z}}
\newcommand{\N}{\mathbb{N}}
\newcommand{\D}{\mathbb{D}}
\renewcommand{\P}{\mathbb{P}}
\newcommand{\Hom}{\opname{Hom}}
\newcommand{\go}{\opname{G_0}}
\newcommand{\Ext}{\opname{Ext}}
\newcommand{\coh}{\opname{coh}}
\newcommand{\End}{\opname{End}}
\newcommand{\vect}{\opname{vect}}
\newcommand{\mc}{\mathcal{C}}
\newcommand{\md}{\mathcal{D}}
\newcommand{\mh}{\mathcal{H}}
\newcommand{\mt}{\mathcal{T}}
\newcommand{\mv}{\mathcal{V}}
\newcommand{\mw}{\mathcal{W}}
\begin{document}

\title{On cluster-tilting graphs for hereditary categories}\thanks{Partially supported by the National Natural Science Foundation of China (Grant No. 11471224) }

\author{Changjian Fu}
\address{Changjian Fu\\Department of Mathematics\\SiChuan University\\610064 Chengdu\\P.R.China}
\email{changjianfu@scu.edu.cn}
\author{Shengfei Geng}
\address{Shengfei Geng\\Department of Mathematics\\SiChuan University\\610064 Chengdu\\P.R.China}
\email{genshengfei@scu.edu.cn}
\subjclass[2010]{16G10, 16E10, 18E30}
\keywords{Hereditary category, (Cluster)-tilting object, (Cluster)-tilting graph, Happel-Unger's conjecture}
\maketitle

\begin{abstract}
Let $\mh$ be a connected hereditary abelian category with tilting objects. It is proved that the cluster-tilting graph associated with $\mh$ is always connected.  As a consequence, we establish the connectedness of the tilting graph for the category $\coh\X$ of coherent sheaves over a weighted projective line $\X$  of wild type.
The connectedness of tilting graphs for such categories was conjectured by Happel and Unger, which has immediately applications in cluster algebras. For instance, we deduce that there is a bijection between the set of indecomposable rigid objects of the cluster category $\mathcal{C}_{\X}$ of $\coh\X$ and the set of cluster variables of the cluster algebra $\mathcal{A}_{\X}$ associated with $\coh\X$.
\end{abstract}

\tableofcontents

\section{Introduction}
Hereditary abelian categories with tilting objects have played an important role in the representation theory of finite dimensional algebras, which are of special interest in connection with the construction of quasitilted algebras~\cite{HRS}. The main examples of such categories are the category $\mod H$ of finitely generated right modules of a finite dimensional hereditary algebra $H$ and the category $\coh\X$ of coherent sheaves over a weighted projective line $\X$ in the sense of Geigle and Lenzing~\cite{GL1}. If the ground field is algebraically closed, then each connected hereditary abelian category with tilting objects is derived equivalent to $\mod H$ or $\coh\X$, see~\cite{H01}. 

Let $k$ be an algebraically closed field and $\mh$ a hereditary abelian category over $k$ with finite dimensional $\Hom$ and $\Ext$ spaces. Recall that an object $T\in \mh$ is called a {\it tilting object} if $\Ext^1_\mh(T,T)=0$ and for $X\in \mh$ with $\Hom_\mh(T,X)=0=\Ext^1_\mh(T,X)$, we have that $X=0$. 
Each basic tilting object of $\mh$ has the same number of  indecomposable direct summands. It was shown in~\cite{HU} that the set $\mathscr{T}_\mh$ of basic tilting objects of $\mh$ admits a partial order $\leq$~({\it cf.} also ~\cite{U93, U,RS,HU05}). The tilting graph is the Hasse diagram of the poset $(\mathscr{T}_\mh, \leq)$. In other words, 
the {\it tilting graph} $\mathcal{G}_t(\mh)$ of $\mh$ has as vertices the isomorphism classes of basic tilting objects of $\mh$, while two vertices $T$ and $T'$ are connected by an edge if and only if they differ by precisely one indecomposable direct summand.

We are interested in the connectedness of $\mathcal{G}_t(\mh)$. When $\mh$ is derived equivalent to a hereditary algebra which is not of wild type, Happel and Unger~\cite{HU} obtained an explicitly characterization for the connectedness. On the other hand, Unger~\cite{U93} conjectured that the tilting graph of a wild hereditary algebra is never connected.
According to Happel's classification theorem~\cite{H01}, Happel and Unger~\cite{HU} proposed the following conjecture.
\begin{conjecture*}[Happel-Unger]
Let $\mh$ be a connected hereditary abelian category. If $\mh$ is derived equivalent to $\coh\X$ with $\X$ tubular or wild or to $\mod H$ with $H$ wild, then $\mathcal{G}_t(\mh)$ is connected if and only if $\mh$ does not contain nonzero projective objects.
\end{conjecture*}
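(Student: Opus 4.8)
The plan is to first establish the stronger statement that the cluster-tilting graph $\mathcal{G}_{ct}(\mathcal{C}_{\mh})$ of the cluster category $\mathcal{C}_{\mh}=D^{b}(\mh)/\tau^{-1}[1]$ is connected, and then derive the conjecture from it. Two standard facts about hereditary categories drive everything: $\mathcal{C}_{\mh}$ is $2$-Calabi--Yau with $\Ext^{1}_{\mathcal{C}_{\mh}}(X,Y)\cong\Ext^{1}_{\mh}(X,Y)\oplus D\Ext^{1}_{\mh}(Y,X)$ for $X,Y\in\mh$, so that rigidity in $\mathcal{C}_{\mh}$ coincides with rigidity in $\mh$; and $\mh$ has no nonzero projective objects if and only if the Auslander--Reiten translate $\tau$ restricts to an auto-equivalence of $\mh$ (equivalently, $\mh$ has no nonzero injectives). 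The whole argument is organized around this dichotomy.

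For the ``if'' direction, assume $\mh$ has no nonzero projectives, so that $\tau$ is an auto-equivalence of $\mh$. Then the $\tau^{-1}[1]$-orbit of any indecomposable $Z[k]$ of $D^{b}(\mh)$ contains the single degree-zero object $\tau^{k}Z\in\mh$, so the canonical functor $\mh\to\mathcal{C}_{\mh}$ is a bijection on indecomposables. Consequently every cluster-tilting object of $\mathcal{C}_{\mh}$ (maximal rigid, with the correct number of summands) is a tilting object of $\mh$, mutation carries tilting objects to tilting objects, and the relation ``differ by one indecomposable summand'' matches mutation because an almost complete cluster-tilting object has exactly two completions. Hence $\mathcal{G}_{t}(\mh)=\mathcal{G}_{ct}(\mathcal{C}_{\mh})$ and connectedness is immediate; in particular this settles $\coh\X$ for $\X$ tubular or wild, which carry no nonzero projectives.

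The real content is therefore the connectedness of $\mathcal{G}_{ct}(\mathcal{C}_{\mh})$, which I would prove by induction on the number $n$ of indecomposable summands via Iyama--Yoshino Calabi--Yau reduction: for a rigid indecomposable $M$ the reduction $\mathcal{U}_{M}={}^{\perp}(M[1])/(M)$ is again $2$-Calabi--Yau, its cluster-tilting objects correspond bijectively and mutation-compatibly to those of $\mathcal{C}_{\mh}$ containing $M$, and $\mathcal{G}_{ct}(\mathcal{U}_{M})$ is the link of $M$ in $\mathcal{G}_{ct}(\mathcal{C}_{\mh})$. Fixing a convenient reference indecomposable $M_{0}$ (a line bundle when $\mh=\coh\X$), it then suffices to (i) connect every cluster-tilting object by mutations to one having $M_{0}$ as a summand, and (ii) connect all $M_{0}$-containing cluster-tilting objects through $\mathcal{U}_{M_{0}}$ by the inductive hypothesis, for which one needs $\mathcal{U}_{M_{0}}$ to again be of hereditary type of smaller rank (a perpendicular-category statement in the spirit of Geigle--Lenzing~\cite{GL1}). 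Step (i) is the main obstacle, and exactly where wildness bites: I expect it to require the fine structure of $\mh$---slopes, the Euler form and stability for $\coh\X$, and transitivity of the Auslander--Reiten structure for $\mod H$---to drive an arbitrary cluster-tilting object toward $M_{0}$.

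Finally, for the ``only if'' direction I would argue contrapositively: if $\mh$ contains a nonzero projective, then by Happel's classification~\cite{H01} one has $\mh\cong\mod H$ for a connected hereditary algebra $H$, which among the listed cases is of wild type. Now $\tau$ no longer preserves $\mh$, so the identification of the second paragraph breaks: mutating a tilting module may produce a shifted projective $P[1]\notin\mh$, and $\mathcal{G}_{t}(\mh)$ is only a proper subgraph of the connected $\mathcal{G}_{ct}(\mathcal{C}_{\mh})$. To show genuine disconnection I would use the count of complements of almost complete tilting modules: over a wild $H$ there exist non-sincere almost complete tilting modules with a unique complement inside $\mh$ (the second completion surviving only as some $P[1]$ in $\mathcal{C}_{\mh}$), and exhibiting tilting modules all of whose one-summand reductions are blocked in this way---separating, for instance, preprojective and preinjective tilting modules from the regular ones---places them in distinct components. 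This half is essentially Unger's~\cite{U93,HU} and is the less novel part.
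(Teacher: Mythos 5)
Your plan for the ``if'' direction has the same architecture as the paper's: pass to the cluster category, identify $\mathcal{G}_t(\mh)$ with $\mathcal{G}_{ct}(\mc(\mh))$ when $\mh$ has no nonzero projectives, and induct on $\rank\go(\mh)$ via Iyama--Yoshino reduction, using that the reduction is again of hereditary type of rank one less (the paper's Theorem~\ref{t:reduction-derived-category}, which itself needs Amiot--Oppermann's $\tau_2$-finite reduction theorem and the Happel--Reiten perpendicular-category results --- not just ``a statement in the spirit of Geigle--Lenzing''). But your step (i) is precisely where the entire content of the theorem sits, and you leave it as ``I expect it to require the fine structure of $\mh$.'' The paper does not prove your step (i) directly; it isolates a weaker notion, \emph{reachability} (a chain of indecomposable rigid objects $X_0,\dots,X_s$ with each $X_i\oplus X_{i+1}$ rigid), proves in Section~\ref{s:reachable-x} that every exceptional sheaf on $\X$ is reachable from $\mathcal{O}$ --- by induction on $\opname{rk}E$, using $\vec c$-shifts of line bundles, socles in tubes, H\"ubner's theorem that $E^\perp\cong\mod H$ with $E\oplus H$ tilting, the fact that a simple projective $P$ of $H$ is cogenerated by $E$, and a rank computation forcing a torsion cokernel that supplies a rigid simple sheaf orthogonal to $P$ or $E$ --- and only then converts each adjacent rigid pair into a path in $\mathcal{G}_{ct}$ by a further IY reduction at the common summand $X_i$ plus the inductive hypothesis. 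Without some version of this argument your induction does not close, and nothing in your sketch (slopes, stability, Euler form) is developed far enough to substitute for it.

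The ``only if'' direction is a more serious problem: you are claiming to prove something that is open. The assertion that $\mathcal{G}_t(\mod H)$ is disconnected for every wild hereditary $H$ is Unger's conjecture; the paper records it as such and proves only the ``if'' half (Theorem~\ref{t:main-1}). Your sketch --- find non-sincere almost complete tilting modules with a unique complement and thereby separate preprojective/preinjective tilting modules from regular ones --- does not work as stated: the existence of individual blocked exchanges does not prevent two tilting modules from being joined by some other path in the graph, and exhibiting a tilting module \emph{all} of whose one-summand reductions fail to have a second complement in $\mod H$ (let alone a whole separated mutation class) is exactly the unresolved difficulty. So at best your proposal could establish the same half of the conjecture that the paper establishes, and the reachability step needed even for that is missing.
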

Note that $\mh$ does not contain nonzero projective objects if $\mh$ is derived equivalent to $\coh\X$ with $\X$ tubular or wild. The ``if" part is more significant, which has applications in cluster algebras~({\it cf.}~\cite{BMRRT,BKL}).
Happel and Unger~\cite{HU} proved that if $\mh$ is derived equivalent to $\mod H$ with $H$ wild, then $\mathcal{G}_t(\mh)$ is connected if $\mh$ does not contain nonzero projective objects. 
By studying the automorphism group of the bounded derived category $\der^b(\mh)$ of $\mh$, Barot {\it et al.}~\cite{BKL} established the connectedness for $\mathcal{G}_t(\mh)$ provided that $\mh$ is derived equivalent to $\coh\X$ for a weighted projective line $\X$ of tubular type.
The aim of this paper is to establish the connectedness of $\mathcal{G}_t(\mh)$ for $\mh$ in a full of generality. 
\begin{theorem}~\label{t:main-1}
Let $\mh$ be a connected hereditary abelian category over $k$. The tilting graph $\mathcal{G}_t(\mh)$ is connected provided that $\mh$ does not contain nonzero projective objects.
\end{theorem}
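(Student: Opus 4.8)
The plan is to translate the problem into the cluster category $\cc_{\mh}=\der^b(\mh)/\langle F\rangle$, where $F=\tau^{-1}[1]$, to identify $\mathcal{G}_t(\mh)$ with the cluster-tilting graph of $\cc_{\mh}$, and then to prove the connectedness of the latter. The point of this detour is twofold: mutation of cluster-tilting objects is \emph{always} defined, and $\cc_{\mh}$ depends only on the triangulated category $\der^b(\mh)$, so the statement becomes independent of the chosen heart in a derived-equivalence class.

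First I would exploit the hypothesis. For a connected hereditary category with tilting objects, the absence of projectives entails, by Serre duality, the absence of injectives, whence $\tau$ is an autoequivalence of $\mh$. Then $F$ carries $\mh$ onto $\mh[1]$, so every $F$-orbit of indecomposables in $\der^b(\mh)$ meets the heart in exactly one object and $\ind\cc_{\mh}$ is canonically $\ind\mh$. A short Serre-duality computation yields, for $X,Y\in\mh$, the formula $\Ext^1_{\cc_{\mh}}(X,Y)\cong\Ext^1_{\mh}(X,Y)\oplus D\Ext^1_{\mh}(Y,X)$, which reflects the $2$-Calabi--Yau property; hence a family of indecomposables of $\mh$ is rigid in $\cc_{\mh}$ precisely when it is rigid in $\mh$. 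Since cluster-tilting objects and basic tilting objects alike have exactly $n=\rank K_0(\mh)$ indecomposable summands, $T\mapsto\bar T$ is a bijection from basic tilting objects of $\mh$ onto cluster-tilting objects of $\cc_{\mh}$.

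Next I would upgrade this to an isomorphism of graphs. If $T=M\oplus X$ is cluster-tilting with $X$ indecomposable, its mutation at $X$ produces a unique indecomposable $X^{*}\not\cong X$ with $M\oplus X^{*}$ cluster-tilting; under the identification above $M\oplus X^{*}$ is a basic tilting object of $\mh$ differing from $T$ in the single summand $X$, i.e.\ an edge of $\mathcal{G}_t(\mh)$. Conversely, two basic tilting objects differing in one summand are cluster-tilting and, by uniqueness of the exchange, linked by a single mutation. Thus $\mathcal{G}_t(\mh)$ is isomorphic to the cluster-tilting graph of $\cc_{\mh}$, and it suffices to prove the latter connected. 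By Happel's classification $\der^b(\mh)$ is that of $\mod H$ with $H$ wild or of $\coh\X$ with $\X$ tubular or wild; whenever $\mh$ is \emph{not} derived equivalent to a wild $\coh\X$, its tilting graph---and hence, by the isomorphism, its cluster-tilting graph---is already covered by prior work, namely the Happel--Unger characterization in the non-wild cases, the Happel--Unger theorem for wild $\mod H$, and Barot et al.\ for tubular $\X$.

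The decisive and, I expect, hardest case is $\coh\X$ with $\X$ wild. Here I would induct on the rank $n$ by $2$-Calabi--Yau reduction: fixing a suitable exceptional object $E$, say a line bundle, I would show that every cluster-tilting object can be connected by mutations to one having $E$ as a direct summand, after which its complement becomes a cluster-tilting object in the cluster category of the hereditary perpendicular category $E^{\perp}$, of strictly smaller rank, where the inductive hypothesis applies. The delicate point is exactly the reachability of a common summand $E$: in wild type mutation may increase the complexity of the summands and there is no finite combinatorial model to fall back on, so reachability must be forced by other means---for instance through the transitivity of the braid-group action on complete exceptional sequences of $\coh\X$, combined with a compatibility between that action and mutation. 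Feeding the resulting connectedness back through the graph isomorphism then proves Theorem~\ref{t:main-1}.
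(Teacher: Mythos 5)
Your overall architecture coincides with the paper's: pass to the cluster category $\mc(\mh)$, identify $\mathcal{G}_t(\mh)$ with $\mathcal{G}_{ct}(\mc(\mh))$ (the paper's Proposition~\ref{p:bijection-tilting-cluster-tilting}), and prove connectedness of the latter by induction on $\rank \go(\mh)$ via Iyama--Yoshino reduction at a common exceptional summand, with the reduced category identified as the derived/cluster category of a smaller hereditary category (the paper's Theorem~\ref{t:reduction-derived-category}, proved via Amiot--Oppermann -- a step you assert but do not justify). Up to that point the proposal is sound.

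The genuine gap is exactly at the step you yourself flag as ``delicate'': reachability in the wild $\coh\X$ case. You propose to force it ``through the transitivity of the braid-group action on complete exceptional sequences of $\coh\X$, combined with a compatibility between that action and mutation.'' No such compatibility is established, and it is not a routine matter: an exceptional pair $(E,F)$ in an exceptional sequence only satisfies the one-sided vanishing $\Hom(F,E)=0=\Ext^1_\X(F,E)$, so the direct sum of the members of an exceptional sequence (or of two consecutive members) need not be rigid, and a braid move replacing $F$ by its left mutation $L_EF$ does not in general produce a new rigid sum. Hence transitivity of the braid action gives a chain of exceptional \emph{sequences}, not a chain of rigid objects pairwise sharing summands, which is what the induction actually consumes. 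This is precisely the content the paper has to supply by hand: its Proposition~\ref{l:reachable-x} proves that any two exceptional sheaves are reachable (in the sense of Definition~\ref{d:reachable}, i.e.\ through a chain of indecomposable rigids with consecutive direct sums rigid) by a separate induction on the rank of the sheaf, using H\"ubner's description of $E^\perp$ as $\mod H$ with $E\oplus H$ tilting, cogeneration of simple projectives by $E$, the rank function, and the tube structure of $\coh_0\X$ to manufacture a rigid sheaf of finite length orthogonal to $E$. A second, smaller divergence: you aim to connect every cluster-tilting object to one containing a \emph{fixed} line bundle $E$, whereas the paper only needs the weaker statement that consecutive cluster-tilting objects along the reachability chain share \emph{some} common summand (which varies along the chain); your stronger statement would itself require essentially the full reachability argument to establish. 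So the skeleton is right, but the load-bearing lemma is missing and the proposed substitute does not deliver it.
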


Our approach to Theorem~\ref{t:main-1} relies on a recent development in representation theory: the cluster-tilting theory. Let $\mc$ be a $2$-Calabi-Yau triangulated category. An object $T\in \mc$ is {\it rigid} if $\Ext^1_\mc(T,T)=0$. An object $T\in \mc$ is a {\it cluster-tilting object} if $T$ is rigid and for $X\in \mc$ with $\Ext^1_\mc(T,X)=0$, we have that $X\in \add T$, where $\add T$ is the full subcategory of $\mathcal{C}$ consisting of objects which are finite direct sums of direct summands of $T$. It was shown in~\cite{DK} that each basic cluster-tilting object has the same number of indecomposable direct summands. Similar to the case of $\mh$, one defines the {\it cluster-tilting graph}~$\mathcal{G}_{ct}(\mc)$ of $\mc$~({\it cf.} Definition~\ref{d:cluster-tilting-graph} for the precisely definition).
For a given hereditary abelian category $\mh$ with tilting objects, we construct the cluster category $\mc(\mh)$ of $\mh$ following~\cite{K}. Among others, the cluster category $\mc(\mh)$ is a $2$-Calabi-Yau triangulated category with cluster-tilting objects.  It is not hard to show that if $\mh$ does not contain nonzero projective objects, then the tilting graph $\mathcal{G}_t(\mh)$ is isomorphic to the cluster-tilting graph $\mathcal{G}_{ct}(\mc(\mh))$ of $\mc(\mh)$ ({\it cf. }Proposition~\ref{p:bijection-tilting-cluster-tilting}). Hence, we deduce Theorem~\ref{t:main-1} from the following result.
\begin{theorem}~\label{t:main-2}
Let $\mh$ be a connected hereditary abelian category with tilting objects. The cluster-tilting graph~$\mathcal{G}_{ct}(\mc(\mh))$ is connected.
\end{theorem}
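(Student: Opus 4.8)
The plan is to fix a reference cluster-tilting object $T_{0}$ of $\mc:=\mc(\mh)$ and to show that every cluster-tilting object can be joined to $T_{0}$ by a finite sequence of mutations. Since $\mc$ is $2$-Calabi-Yau and admits cluster-tilting objects, the exchange theory of Iyama-Yoshino guarantees that each almost-complete cluster-tilting object has exactly two complements; hence mutation is everywhere defined and $\mathcal{G}_{ct}(\mc)$ is an $n$-regular graph, where $n$ is the common number of indecomposable summands. Connectedness is therefore equivalent to reachability from the single vertex $T_{0}$, for which I would take the image under the canonical projection $\pi\colon\der^b(\mh)\to\mc$ of a distinguished tilting object of $\mh$, for instance the canonical tilting object when $\mh=\coh\X$.

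The engine of the argument is an induction on the rank $n$, carried out by \emph{cluster-tilting reduction}. Suppose a cluster-tilting object $T$ shares an indecomposable summand $X$ with $T_{0}$. Lifting $X$ to an exceptional object of $\mh$ lying in the fundamental domain, one passes to the Iyama-Yoshino reduction $\mc_{X}$, a $2$-Calabi-Yau category in which both $T$ and $T_{0}$ descend to cluster-tilting objects and in which mutations at the summands other than $X$ correspond bijectively to such mutations in $\mc$. The key structural input is that for hereditary $\mh$ this reduction is again the cluster category of a hereditary abelian category with tilting objects, namely of the perpendicular category $X^{\perp}$ of Geigle-Lenzing and Schofield, of rank $n-1$. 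Because the class of such categories is closed under passing to perpendicular categories --- and may interchange the $\coh\X$ and $\mod H$ types --- it is essential that Theorem~\ref{t:main-2} is stated for the whole class, so that the induction hypothesis applies; the connecting path then lifts verbatim from $\mc_{X}$ to a path between $T$ and $T_{0}$ in $\mathcal{G}_{ct}(\mc)$.

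What remains, and what drives the whole proof, is to arrange that an arbitrary cluster-tilting object $T$ can be mutated until it shares a summand with $T_{0}$. Here I would exploit the concrete structure of $\mh$: fixing a distinguished indecomposable summand $X_{0}$ of $T_{0}$ (a line bundle when $\mh=\coh\X$), the task is to show that from any $T$ one reaches, through mutations, a cluster-tilting object having $X_{0}$ as a summand. This is a statement purely about the hereditary category and its Auslander-Reiten structure, and I would attack it through the interplay of slopes, ranks and the Serre functor, arguing that these numerical invariants are controlled along mutation sequences. The base cases of small rank are immediate, and the tame types together with the hereditary-algebra case are accessible by earlier results.

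The main obstacle is precisely this reachability of a fixed summand in the wild weighted-projective-line case. As noted above, and as underlies Proposition~\ref{p:bijection-tilting-cluster-tilting}, when $\mh$ has no nonzero projectives the fundamental domain of $\mc$ is $\ind\mh$ itself, so $\mathcal{G}_{ct}(\mc)$ coincides with the tilting graph and the cluster category supplies \emph{no} auxiliary shifted summands to ease the passage between components. Consequently there is no evident local distance-decreasing mutation, and a priori the graph could break into many pieces; the difficulty is to produce a genuinely global argument forcing a chosen line bundle to be reachable, while ensuring that the perpendicular categories appearing in the induction remain within the class of hereditary categories with tilting objects so that the reduction can be iterated. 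It is here that the $2$-Calabi-Yau symmetry of $\Ext^{1}$ and the totality of mutation are expected to do the essential work.
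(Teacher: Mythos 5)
Your skeleton matches the paper's: both arguments run by induction on $\opname{rank}\go(\mh)$, use Happel's classification to reduce to $\mod H$ or $\coh\X$, and connect two cluster-tilting objects sharing an indecomposable summand $X$ by passing to the Iyama--Yoshino reduction at the $\mathbb{S}_2$-orbit of $X$, which is again the cluster category of a hereditary category (essentially $X^{\perp}$) of rank one less. So the framework is right. But the step you yourself flag as ``the main obstacle'' --- forcing an arbitrary cluster-tilting object to meet the reference one --- is exactly the content of the paper's key technical result, and you do not supply an argument for it. Gesturing at ``slopes, ranks and the Serre functor'' is not a proof; as it stands the proposal has a genuine gap precisely where the theorem lives (the wild $\coh\X$ case).

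Moreover, the way you formulate the missing step makes it harder than necessary, arguably circular: you ask that $T$ be \emph{mutated} until it contains the distinguished summand $X_0$, i.e.\ that a path in $\mathcal{G}_{ct}(\mc)$ reach a vertex containing $X_0$ --- which is close to the connectedness you are trying to prove. The paper instead introduces a strictly weaker \emph{reachability} relation on indecomposable rigid objects (Definition~\ref{d:reachable}): $M$ is reachable by $N$ if there is a chain $M, M_1,\dots,M_s, N$ with each consecutive direct sum rigid. This is a purely abelian-category statement, proved for $\coh\X$ (Proposition~\ref{l:reachable-x}) by induction on the rank function $\opname{rk}E$ of the sheaf: torsion sheaves and line bundles are handled via the tube structure and $\Hom(\mathcal{O}(\vec x),\mathcal{O}(\vec y))=R_{\vec y-\vec x}$, and an exceptional bundle $E$ of higher rank is linked to a simple projective $P$ of $E^{\perp}\cong\mod H$ using that $P$ is cogenerated by $E$; a rank count then forces a cokernel of rank $0$ sitting in a tube of rank $>1$, producing a rigid simple sheaf Ext-orthogonal to $E$, which re-enters the low-rank cases. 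Given reachability, one completes each rigid pair $X_i\oplus X_{i+1}$ to a tilting object and applies the reduction step to each consecutive pair; no a priori mutation path to $X_0$ is needed. This decoupling is the idea your proposal is missing.
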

The connectedness of $\mathcal{G}_{ct}(\mc(\mh))$ for the cluster category $\mc(\mh)$ is known if $\mh$ is derived equivalent to $\mod H$ for a finite dimensional hereditary algebra $H$~\cite{BMRRT} or to $\coh\X$ with $\X$ tubular~\cite{BKL}.
 Our strategy of the proof for Theorem~\ref{t:main-2} is different to the one in~\cite{BMRRT, BKL}. We prove it by induction on the rank of the Grothendieck group $\go(\mh)$ of $\mh$ together with Iyama-Yoshino's reduction for triangulated categories. The key ingredient in our approach is to establish a reachable property for $\coh\X$ ({\it cf.} Section~\ref{ss:reachable-x}).

The connectedness of $\mathcal{G}_{ct}(\mc(\mh))$ has  applications in the theory of cluster algebras. It was shown in~\cite{BMR, BKL} that $\mc(\mh)$ admits a cluster structure in the sense of ~\cite{BIRS}. Roughly speaking, for each basic cluster-tilting object $T\in \mc(\mh)$, one can construct a cluster algebra $\mathcal{A}_T$ associated with $T$. As an immediately consequence of Theorem~\ref{t:main-2}, we have~({\it cf.} Corollary~\ref{c:cor-1})
\begin{corollary}
Let $\mh$ be a connected hereditary abelian category and $\mc(\mh)$ the associated cluster category with a cluster-tilting object $T$. There is a bijection between the set of indecomposable rigid objects of $\mc(\mh)$ and the set of cluster variables of the cluster algebra $\mathcal{A}_T$.
\end{corollary}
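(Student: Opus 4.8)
The plan is to obtain the bijection as a direct consequence of Theorem~\ref{t:main-2} together with the cluster-character formalism already available for $\mc(\mh)$. By \cite{BMR, BKL} the category $\mc(\mh)$ carries a cluster structure in the sense of \cite{BIRS}: its basic cluster-tilting objects index the seeds of $\mathcal{A}_T$, the indecomposable direct summands correspond to cluster variables, and mutation of cluster-tilting objects matches seed mutation. Via the associated cluster character $M\mapsto X_M$ one has $X_{T_i}$ equal to the initial cluster variables attached to the indecomposable summands $T_i$ of $T$, and the exchange triangles are carried to the exchange relations of $\mathcal{A}_T$. The first step is to record the resulting map $\Phi\colon M\mapsto X_M$ on indecomposable rigid objects and to note the standard fact, built into this formalism, that $\Phi$ restricts to a bijection between those indecomposable rigid objects that are \emph{reachable} from $T$ (those appearing as a summand of a cluster-tilting object lying in the connected component of $T$ in $\mathcal{G}_{ct}(\mc(\mh))$) and the cluster variables of $\mathcal{A}_T$. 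Surjectivity onto the cluster variables is immediate here, since every cluster variable is produced from the initial seed by a finite chain of mutations, each step of which matches a mutation of cluster-tilting objects, so the variable is the cluster-character value of the newly appearing indecomposable summand; injectivity follows because an indecomposable rigid object is determined by its index, equivalently by the leading term of $X_M$ (\cite{DK}), so distinct such objects have distinct cluster characters.

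The decisive step is to upgrade ``reachable'' to ``all'', and this is exactly where Theorem~\ref{t:main-2} enters. Every basic rigid object of $\mc(\mh)$ can be completed to a basic cluster-tilting object, so each indecomposable rigid $M$ is a direct summand of some basic cluster-tilting object $T_M$. Since Theorem~\ref{t:main-2} asserts that $\mathcal{G}_{ct}(\mc(\mh))$ is connected, $T_M$ is joined to $T$ by a finite sequence of mutations; hence $T_M$, and with it $M$, is reachable. Therefore the reachable indecomposable rigid objects are precisely all indecomposable rigid objects, and the bijection of the first step extends to a bijection between the full set of indecomposable rigid objects of $\mc(\mh)$ and the cluster variables of $\mathcal{A}_T$.

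The main obstacle is concentrated in this reachability argument, and it is precisely the content supplied by Theorem~\ref{t:main-2}: absent the connectedness of $\mathcal{G}_{ct}(\mc(\mh))$ one could only conclude the bijection for the reachable part, leaving open the possible existence of non-reachable indecomposable rigid objects whose cluster characters fail to be cluster variables. The auxiliary ingredients --- completion of rigid objects to cluster-tilting objects, independence of $X_M$ from the ambient cluster-tilting object, and index-injectivity of the cluster character --- are standard properties of the cluster-structure and cluster-character machinery and require no input beyond \cite{BMR, BKL, BIRS, DK}.
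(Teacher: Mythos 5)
Your argument is essentially the paper's own: the authors deduce the corollary directly from Theorem~\ref{t:main-2} together with the cluster structure on $\mc(\mh)$ established in \cite{BMR,BKL} and \cite[Theorem 5]{DK}, the latter supplying exactly the index-injectivity and reachable-part bijection that you invoke, with connectedness of $\mathcal{G}_{ct}(\mc(\mh))$ upgrading ``reachable'' to ``all''. Your write-up just makes explicit the steps the paper leaves to the cited references.
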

The paper is structured as follows. In Section~\ref{s:cluster-tilting-theory}, we recall certain basic properties for hereditary abelian categories with tilting objects and their associated cluster categories. 
The relation between the tilting graph of a hereditary abelian category $\mh$ and the cluster-tilting graph of the cluster category $\mc(\mh)$ is discussed. Section~\ref{s:IY-reduction} is devoted to investigate the Iyama-Yoshino's reduction for the derived category $\der^b(\mh)$  with respect to an exceptional object of $\mh$. In particular,~Theorem~\ref{t:reduction-derived-category} is proved, which plays an important role in the proof of Theorem~\ref{t:main-2}. In Section~\ref{s:reachable-algebra}, we introduce the definition of reachable~(Definition~\ref{d:reachable}) and establish the reachable property for the cluster category $\mc(\mod H)$ of a hereditary algebra $H$ (Proposition~\ref{p:reachable-algebra}). We give an alternative proof for the connectedness of the cluster-tilting graph~$\mathcal{G}_{ct}(\mc(\mod H))$ basing on Theorem~\ref{t:reduction-derived-category} and Proposition~\ref{p:reachable-algebra}. After recalling certain basic properties for weighted projective lines, we establish the reachable property (Proposition~\ref{l:reachable-x}) for the category $\coh\X$ of coherent sheaves over $\X$ in Section~\ref{s:reachable-x}.  We present the proof of Theorem~\ref{t:main-2} in Section~\ref{s:proof} and some consequences in Section~\ref{s:consequence}.

\subsection*{Conventions}
Throughout this paper, denote by $k$ an algebraically closed field. By a hereditary abelian category, we mean a hereditary abelian category over $k$ with finite dimensional $\Hom$ and $\Ext$ spaces.
Denote by $\D=\Hom_k(-,k)$ the $k$-duality. For a category $\mathcal{C}$ and an object $M\in \mathcal{C}$, denote by $|M|$  the number of non-isomorphic indecomposable direct summands of $M$. Denote by $\add M$ the full subcategory of $\mathcal{C}$ consisting of objects which are finite direct sums of direct summands of $M$. 
For a triangulated category $\mt$ with a Serre functor $\mathbb{S}$, we will denote by $\mathbb{S}_2:=\mathbb{S}\circ \Sigma^{-2}$, where $\Sigma$ is the suspension functor of $\mt$. For unexplained terminology in representation theory of finite dimensional algebras, we refer to~\cite{Ringel, ASS06, SS}.

\section{Cluster-tilting theory for hereditary categories}~\label{s:cluster-tilting-theory}
\subsection{Hereditary categories with tilting objects}
In this section, we recall some basic facts about hereditary abelian categories with tilting objects. We refer to~\cite{H01,HR,HRS} for more details. 
We begin with the following proposition, where the statements $(1), (2), (4)$ are proved in~\cite{HRS} and $(3)$ is taken from~\cite{H98}.
 \begin{proposition}~\label{p:structure}
 Let $\mh$ be a hereditary abelian category with tilting objects.
 \begin{itemize}
 \item[$(1)$] $\mh$ has almost split sequences and hence the Auslander-Reiten translation $\tau$;
 \item[$(2)$] The Grothendieck group $\go(\mh)$ is a free abelian group with finite rank.
 \end{itemize}
 Assume moreover that $\mh$ is connected.
 \begin{itemize}
 \item[$(3)$] If $\mh$ does contain nonzero projective objects, then $\mh$ is equivalent to the category $\mod H$ of finite dimensional modules for a finite dimensional hereditary $k$-algebra $H$;
 \item[$(4)$] If $\mh$ does not contain nonzero projective objects, then the Auslander-Reiten translation $\tau:\mh \to \mh$ is an equivalence.
 \end{itemize}
 \end{proposition}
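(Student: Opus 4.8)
The plan is to handle the four assertions in turn, taking as basic datum a tilting object $T=\bigoplus_i T_i$ and transporting questions about $\mh$ into the module category of the finite-dimensional algebra $A:=\End_\mh(T)$ through the derived equivalence $\der^b(\mh)\simeq\der^b(\mod A)$ furnished by the tilting theorem. For $(1)$ I would produce a Serre functor on $\der^b(\mh)$: the algebra $A$ is quasi-tilted, hence of finite global dimension, so $\der^b(\mod A)$ carries a Serre functor given by the derived Nakayama functor, and transporting it across the equivalence yields a Serre functor $\mathbb{S}$ on $\der^b(\mh)$. The resulting Serre duality produces Auslander-Reiten triangles in $\der^b(\mh)$, which restrict to almost split sequences in $\mh$ and hence to the translation $\tau$, as in \cite{HRS}; concretely one obtains the formula $\D\Ext^1_\mh(X,Y)\cong\Hom_\mh(Y,\tau X)$. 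For $(2)$, the same derived equivalence combined with the standard isomorphism $\go(\mh)\cong\go(\der^b(\mh))$ and $\go(\der^b(\mod A))\cong\go(\mod A)\cong\Z^{|T|}$ exhibits $\go(\mh)$ as free abelian of rank $|T|$.

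For $(3)$, assuming a nonzero projective object exists, the strategy is to promote it to a projective generator. Using connectedness together with the hereditary hypothesis, one shows that $\mh$ has enough projectives and that the direct sum $P$ of the indecomposable projective objects generates $\mh$; the functor $\Hom_\mh(P,-)\colon\mh\to\mod\End_\mh(P)$ is then an equivalence, and $H:=\End_\mh(P)$ is necessarily hereditary. Making precise the passage from a single projective object to a projective generator is the delicate point, and I would follow Happel's analysis in \cite{H98}.

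For $(4)$, assuming no nonzero projective objects, I want $\tau\colon\mh\to\mh$ to be an auto-equivalence. Auslander-Reiten theory already defines $\tau$ on every indecomposable once projectives are absent, so the task is to construct a quasi-inverse. Serre duality from $(1)$ identifies the Serre functor of $\der^b(\mh)$ with $\tau\circ\Sigma$, and connectedness together with this duality forces the simultaneous absence of injective objects; hence $\tau^{-1}$ is everywhere defined and furnishes the quasi-inverse. I expect the real obstacles to be $(1)$ and $(3)$: once the Serre functor of $(1)$ is available both $(2)$ and $(4)$ follow essentially formally, whereas $(3)$ depends on the separate and more substantial identification of $\mh$ with $\mod H$, which is the hardest part of the argument.
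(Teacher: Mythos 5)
The paper offers no proof of this proposition: it is stated as a quotation of known results, with $(1)$, $(2)$, $(4)$ attributed to \cite{HRS} and $(3)$ to \cite{H98}, so there is no in-paper argument to compare against. Your sketch follows exactly the route of those references --- the Serre functor on $\der^b(\mh)$ transported from the quasitilted endomorphism algebra for $(1)$, $(2)$ and $(4)$, and Happel's promotion of a nonzero projective to a projective generator for $(3)$ --- and is correct at the level of detail given. The one step worth making explicit is the passage in $(1)$ from Auslander--Reiten triangles in $\der^b(\mh)$ to almost split sequences in $\mh$: one must verify that $\tau X=\mathbb{S}X[-1]$ lies in the heart for $X$ indecomposable non-projective, which uses hereditariness; the same computation shows that $\mathbb{S}$ carries indecomposable projectives to indecomposable injectives of $\mh$, and this is the precise content behind your assertion in $(4)$ that the absence of projectives forces the absence of injectives and hence that $\tau^{-1}$ is everywhere defined.
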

 
 Recall that an object $T\in \mh$ is {\it rigid} if $\Ext^1_\mh(T,T)=0$. It is {\it exceptional} if it is rigid and indecomposable. The following equivalent definition of tilting objects is useful.
 \begin{lemma}{\cite[Proposition 1.7]{HR}}~\label{l:tilting-object}
Let $\mh$ be a hereditary abelian category with tilting objects. Then an object $T\in \mh$ is a tilting object if and only if $T$ is rigid and $|T|=\opname{rank} \go(\mh)$, where $\opname{rank} \go(\mh)$ is the rank of the Grothendieck group $\go(\mh)$ of $\mh$.
\end{lemma}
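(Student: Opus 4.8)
The plan is to reduce the statement to a comparison between $|T|$ and $n:=\rank\go(\mh)$, exploiting the Euler form. Since $\mh$ is hereditary, $\Ext^i$ vanishes for $i\ge 2$, so the \emph{Euler form} $\langle [X],[Y]\rangle:=\dim_k\Hom_\mh(X,Y)-\dim_k\Ext^1_\mh(X,Y)$ is a well-defined bilinear form on $\go(\mh)$. The forward implication is then nearly free: if $T$ is tilting it is rigid by definition, so only the equality $|T|=n$ requires argument, and the whole content of the lemma is the equivalence, for a \emph{rigid} object $T$, between $|T|=n$ and the tilting condition ``$\Hom_\mh(T,X)=0=\Ext^1_\mh(T,X)$ forces $X=0$''.

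First I would establish that the classes of the indecomposable summands of any rigid object are linearly independent in $\go(\mh)$, so that $|T|\le n$ always. Write $T=\bigoplus_{i=1}^r T_i$ with the $T_i$ indecomposable; each $T_i$ is then exceptional, whence $\langle [T_i],[T_i]\rangle=\dim_k\End_\mh(T_i)\ge 1$. Invoking the existence of exceptional sequences in hereditary categories, I would reorder the $T_i$ so that $\Hom_\mh(T_i,T_j)=0$ for $i>j$; since rigidity kills all the relevant $\Ext^1$ terms, the Gram matrix $\big(\langle [T_i],[T_j]\rangle\big)_{i,j}=\big(\dim_k\Hom_\mh(T_i,T_j)\big)_{i,j}$ becomes upper triangular with strictly positive diagonal. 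Its determinant is nonzero, so the $[T_i]$ are linearly independent and $r=|T|\le n$.

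The heart of the argument is the reverse bound, for which I would use the right perpendicular category $T^\perp:=\{X\in\mh\mid \Hom_\mh(T,X)=0=\Ext^1_\mh(T,X)\}$. By the definition of tilting object, $T$ is tilting if and only if $T$ is rigid and $T^\perp=0$. The key input is that for rigid $T$ the subcategory $T^\perp$ is again a hereditary abelian category (with tilting objects) and that passing to $T^\perp$ drops the rank by exactly $|T|$, i.e. $\rank\go(T^\perp)=n-|T|$; concretely one orders the summands of $T$ into an exceptional sequence and strips them off one at a time, each step cutting the rank by one. Granting this, the lemma follows at once: a nonzero length subcategory has nonzero Grothendieck group, so $T^\perp=0$ is equivalent to $\rank\go(T^\perp)=0$, that is to $|T|=n$. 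In particular a tilting $T$ has $|T|=n$ (forward), and a rigid $T$ with $|T|=n$ has $T^\perp=0$ and is therefore tilting (backward).

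I expect the main obstacle to be this perpendicular-category step, namely justifying both that the indecomposable summands of a rigid object can be arranged into an exceptional sequence and that $T^\perp$ is a well-behaved hereditary category with $\rank\go(T^\perp)=n-|T|$. Equivalently, this is the content of a Bongartz-type completion: a rigid object with $T^\perp\ne 0$ should admit an exceptional $Y\notin\add T$ with $T\oplus Y$ still rigid (obtained from a universal extension built on an object of $T^\perp$), so that no rigid object with fewer than $n$ summands can be tilting. This completion, together with the independence bound $|T|\le n$ of the second paragraph, pins down the maximal rigid objects as exactly those with $n$ summands and closes the proof.
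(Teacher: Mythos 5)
The paper does not prove this lemma at all: it is imported verbatim from \cite{HR} (Proposition~1.7), so there is no internal argument to measure your proposal against. What you have written is, in substance, the standard proof of that result, and the overall architecture (Euler form gives $|T|\le n$; perpendicular categories give the rest) is sound. Two of the steps you flag as ``key inputs'' are, however, exactly where the content lies, and both are stated slightly inaccurately as written.

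First, the ability to order the summands of a rigid object so that $\Hom_\mh(T_i,T_j)=0$ for $i>j$ is not a consequence of ``the existence of exceptional sequences''; it is the assertion that the Hom-quiver of a rigid object in a hereditary category is acyclic. That rests on the Happel--Ringel lemma (a nonzero map $X\to Y$ between indecomposables with $\Ext^1_\mh(Y,X)=0$ is mono or epi) combined with $\End_\mh(T_i)\cong k$ for exceptional $T_i$ (\cite{HRin}; Proposition~\ref{p:perpendicular-category}(1) in this paper). Once that is granted, your triangular Gram-matrix argument for $|T|\le\rank\go(\mh)$ is correct. Second, your closing inference ``$\rank\go(T^{\perp})=0$ implies $T^{\perp}=0$ because a nonzero length subcategory has nonzero Grothendieck group'' is too casual: $T^{\perp}$ need not be a length category (already for $\mh=\coh\X$ it generally is not), so you cannot count simples. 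The correct route is to use the full strength of the perpendicular-category theorem of \cite{GL91, HR}: $T^{\perp}$ is again a hereditary abelian category \emph{with a tilting object} $T'$ and $\rank\go(T^{\perp})=n-|T|$; if this rank is $0$, your own bound from the second paragraph forces $T'=0$, and then the defining property of a tilting object ($\Hom(T',X)=0=\Ext^1(T',X)\Rightarrow X=0$, vacuous for $T'=0$) yields $T^{\perp}=0$. With these two repairs the argument closes in both directions as you describe, and coincides with the proof in \cite{HR} rather than offering a genuinely different route.
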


In particular, each basic tilting object in a hereditary category $\mh$  has the same number of non-isomorphic indecomposable direct summands. 
A rigid object $M\in \mh$ is {\it partial tilting} if there is an object $N\in \mh$ such that $M\oplus N$ is a tilting object of $\mh$. In this cases, $N$ is called a {\it complement} of $M$. A partial tilting object $M$ is an {\it almost complete partial tilting object} if $|M|=\rank \go(\mh)-1$.
\begin{lemma}~\cite[Proposition 3.2]{HU}~\label{l:partial-tilting}
Let $\mh$ be a hereditary abelian category with tilting objects. Then any rigid object $M\in \mh$ is a partial tilting object of $\mh$.
\end{lemma}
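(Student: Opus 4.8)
The plan is to show that any rigid object $M \in \mh$ can be completed to a tilting object by repeatedly adjoining suitable exceptional objects until the rank bound of Lemma~\ref{l:tilting-object} is met. The strategy reduces everything to the following single completion step: if $M$ is rigid but not tilting, then there exists an exceptional object $X \in \mh$ with $\Ext^1_\mh(X,M)=0=\Ext^1_\mh(M,X)$ and $X \notin \add M$, so that $M \oplus X$ is again rigid with $|M \oplus X| = |M| + 1$. Since $|M| \leq \rank \go(\mh)$ for any rigid object (as the classes of non-isomorphic indecomposable summands are linearly independent in $\go(\mh)$, using that $\langle -, - \rangle = \dim\Hom - \dim\Ext^1$ restricted to a rigid object is given by a positive-definite/unimodular Euler form on the summands), this process terminates after finitely many steps at an object of rank exactly $\rank\go(\mh)$, which is tilting by Lemma~\ref{l:tilting-object}.

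\medskip

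First I would fix a rigid $M$ that is not already tilting, so that by Lemma~\ref{l:tilting-object} we have $|M| < \rank\go(\mh)$. The heart of the argument is producing the new summand $X$. The natural candidate is the \emph{right perpendicular category}
\[
M^\perp = \{\, Y \in \mh \mid \Hom_\mh(M,Y) = 0 = \Ext^1_\mh(M,Y)\,\}.
\]
The key structural input is that for a hereditary category $\mh$ with tilting objects and a rigid (hence partial tilting on the level of its own perpendicular) object $M$, the category $M^\perp$ is again a hereditary abelian category with tilting objects, of rank $\rank\go(\mh) - |M|$; this is the perpendicular-category formalism of Geigle--Lenzing and Happel--Rickard--Schofield. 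Since $M$ is not tilting, $M^\perp \neq 0$, so it contains a nonzero exceptional object $X$. By construction $\Ext^1_\mh(M,X)=0$, and because $X \in M^\perp$ is exceptional in the perpendicular category, one checks $\Ext^1_\mh(X,X)=0$ and $\Ext^1_\mh(X,M)=0$ as well, the latter using $\Hom_\mh(M,X)=0$ together with heredity and the Serre/Auslander--Reiten duality to control $\Ext^1$ in the other direction. Thus $M \oplus X$ is rigid of strictly larger rank.

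\medskip

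I expect the main obstacle to be the verification that $M^\perp$ inherits the property of being a hereditary abelian category \emph{with tilting objects} of the correct rank, and in particular that $\Ext^1_\mh(X,M)$ really vanishes for the chosen $X$. Vanishing of $\Ext^1_\mh(M,X)$ is immediate from $X \in M^\perp$, but the symmetric condition $\Ext^1_\mh(X,M) = 0$ is the delicate point: it is not part of the definition of $M^\perp$ and must be extracted from rigidity of $M$, heredity (so that $\Ext^{\geq 2}$ vanishes and long exact sequences truncate), and a dimension/Euler-form count. An alternative route that sidesteps some of this, and which I would keep in reserve, is to argue entirely inside the bounded derived category $\der^b(\mh)$: there $M$ is a rigid object, and one completes it to a tilting object of $\der^b(\mh)$ using Bongartz-type completion for exceptional sequences, then transports the result back to $\mh$ using that $\mh$ has tilting objects and the relevant Serre functor $\mathbb{S}$ is available. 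Either way, the finiteness and termination of the completion procedure follow from the rank bound, so once the single completion step is secured the lemma follows immediately.
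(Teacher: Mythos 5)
The paper does not actually prove this lemma --- it cites \cite[Proposition 3.2]{HU} --- so your proposal has to be judged on its own merits, and it has a genuine gap exactly at the point you yourself flag as ``delicate.'' Your entire argument rests on the single completion step: if $M$ is rigid but not tilting, pick \emph{any} nonzero exceptional object $X\in M^{\perp}$ and claim that $\Ext^1_{\mh}(X,M)=0$ follows from $\Hom_{\mh}(M,X)=0=\Ext^1_{\mh}(M,X)$ plus heredity and Serre duality. This is false. By Serre duality $\Ext^1_{\mh}(X,M)\cong \D\Hom_{\mh}(M,\tau X)$, whereas membership in $M^{\perp}$ only controls $\Hom_{\mh}(M,X)$ and $\Hom_{\mh}(X,\tau M)$; since $M^{\perp}$ is not $\tau$-stable, nothing forces $\Hom_{\mh}(M,\tau X)$ to vanish. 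Concretely, take $\mh=\coh\X$ of weight type $(2,2,2,2)$ and $M=\mathcal{O}$. The exceptional simple sheaf $S_{i,0}$ lies in $\mathcal{O}^{\perp}$ (from the sequence $0\to\mathcal{O}\to\mathcal{O}(\vec{x}_i)\to S_{i,0}\to 0$ one computes $\Hom_{\X}(\mathcal{O},S_{i,0})=0=\Ext^1_{\X}(\mathcal{O},S_{i,0})$ using $R_{\vec{\omega}}=R_{\vec{\omega}-\vec{x}_i}=0$), yet $\Ext^1_{\X}(S_{i,0},\mathcal{O})\cong\D\Hom_{\X}(\mathcal{O},S_{i,1})\neq 0$ because the multiplication map $R_{\vec{x}_i}\to R_{\vec{c}}$ is not surjective. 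So $\mathcal{O}\oplus S_{i,0}$ is not rigid, and the inductive step collapses for an arbitrary choice of $X$.

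The real content of the lemma is precisely the existence of a \emph{good} complement, i.e.\ a specific tilting object $T$ of $M^{\perp}$ with $\Ext^1_{\mh}(T,M)=0$; this is the Bongartz-type universal-extension (torsion-pair) argument of Happel--Reiten, which the paper records for a single exceptional object as Proposition~\ref{p:perpendicular-category}(3) and which \cite{HU} extends to arbitrary rigid $M$. Your fallback of ``Bongartz completion in $\der^b(\mh)$'' is the same missing argument in different clothing: classical Bongartz completion extends against a projective generator, which $\mh$ need not have, and adapting it is exactly the work being omitted. (Two smaller points: the linear independence of the classes $[M_i]$ in $\go(\mh)$ is correct but is not a consequence of positive definiteness of the Euler form, which fails in the wild case; and your route via $M^{\perp}$ implicitly uses connectedness and the absence of projectives, so the case $\mh\cong\mod H$ would need to be treated separately, e.g.\ by ordinary Bongartz completion.)
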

\begin{lemma}~\cite[Proposition 3.6]{HU}~\label{l:almost-tilting}
Let $\mh$ be a hereditary abelian category with tilting objects. Assume that $\mh$ does not contain  nonzero projective objects. Let $M$ be an almost complete partial tilting object. Then there exist precisely two complements $X$ and $Y$ of $M$. Moreover, exactly one of the following two short exact sequences occurs:
\[0\to X\to B\to Y\to 0~\text{and}~ 0\to Y\to B'\to X\to 0, 
\]
where $B, B'\in \add M$.
\end{lemma}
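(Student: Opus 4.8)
The plan is to translate ``complement'' into the language of rigid objects via Lemma~\ref{l:tilting-object}, to produce the two complements through minimal $\add M$-approximations, and to rule out a third one using Auslander--Reiten duality. Since every tilting object has exactly $\rank\go(\mh)$ indecomposable summands while $|M| = \rank\go(\mh) - 1$, a complement of $M$ is precisely an indecomposable object $X \notin \add M$ such that $M \oplus X$ is rigid (any rigid object is partial tilting by Lemma~\ref{l:partial-tilting}, hence has at most $\rank\go(\mh)$ indecomposable summands, so the summand completing $M$ is forced to be a single indecomposable). At least one complement $X$ exists because $M$ is partial tilting by hypothesis. I shall use throughout that $\mh$ is hereditary, so $\Ext^i_\mh(-,-) = 0$ for $i \ge 2$, and that $\tau$ is an equivalence by Proposition~\ref{p:structure}$(4)$; the latter is exactly where the absence of nonzero projective objects is used.

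First I would fix a complement $X$ and manufacture a second one by an exchange sequence. As $\add M$ has finitely many indecomposable objects it is functorially finite, so $X$ admits a minimal right $\add M$-approximation $\pi\colon B' \to X$ and a minimal left $\add M$-approximation $\iota\colon X \to B$. The key dichotomy to establish is that \emph{exactly one} of the two maps is ``good'': either $\pi$ is an epimorphism, and setting $Y := \ker\pi$ yields
\[ 0 \to Y \to B' \xrightarrow{\pi} X \to 0, \]
or $\iota$ is a monomorphism, and setting $Y := \cok\iota$ yields
\[ 0 \to X \xrightarrow{\iota} B \to Y \to 0. \]
That at least one case occurs, and that the two are mutually exclusive, is where the $\tau$-equivalence is decisive: it forbids the degenerate configuration (left approximation not mono \emph{and} right approximation not epi) that over a hereditary algebra with projectives produces almost complete tilting modules with a single complement.

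Next I would check that the object $Y$ so obtained is genuinely a complement, i.e.\ that $M \oplus Y$ is rigid. Applying $\Hom_\mh(M,-)$ and $\Hom_\mh(-,M)$ to the relevant exchange sequence and invoking $\Ext^1_\mh(M, M \oplus X) = 0 = \Ext^1_\mh(M \oplus X, M)$ together with $\Ext^{\ge 2}_\mh = 0$ gives $\Ext^1_\mh(M, Y) = 0 = \Ext^1_\mh(Y, M)$ at once; here the defining approximation property of $\iota$ (resp.\ $\pi$) is what makes the pertinent connecting map surjective. The delicate point is the self-orthogonality $\Ext^1_\mh(Y, Y) = 0$, which I would extract by applying $\Hom_\mh(Y,-)$ and $\Hom_\mh(-,Y)$ to the exchange sequence and combining the approximation property with hereditarity to force every self-extension of $Y$ to split. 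Minimality of the approximation then guarantees that $Y$ is indecomposable, lies outside $\add M$, and is not isomorphic to $X$; by Lemma~\ref{l:tilting-object}, $M \oplus Y$ is tilting.

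Finally, to see that $X$ and $Y$ are the only complements, I would take an arbitrary complement $Z$ and show $Z \cong X$ or $Z \cong Y$. Since $M \oplus X \oplus Z$ would have $\rank\go(\mh)+1$ summands it cannot be rigid, so at least one of $\Ext^1_\mh(X, Z)$, $\Ext^1_\mh(Z, X)$ is nonzero; via the duality $\Ext^1_\mh(A, B) \cong \D\Hom_\mh(B, \tau A)$ coming from the $\tau$-equivalence I expect to show that exactly one of them is nonzero and one-dimensional, and that the unique non-split extension realising it is precisely one of the two exchange sequences above, with middle term in $\add M$. Comparing this with the canonical exchange sequence built from $X$ identifies $Z$ with $Y$ (or with $X$). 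I anticipate that the genuine obstacle of the whole argument lies in this last paragraph together with the rigidity of the constructed complement: both the one-dimensionality of the governing $\Ext^1$ and the control of the middle term rest on combining hereditarity with the Serre/Auslander--Reiten duality, and it is exactly the hypothesis that $\mh$ has no nonzero projective objects---equivalently, that $\tau$ is an equivalence---that makes these two ingredients available and thereby upgrades ``at most two complements'' to ``precisely two''.
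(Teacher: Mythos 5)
First, a point of comparison: the paper does not prove this lemma at all --- it is imported verbatim from Happel--Unger \cite[Proposition 3.6]{HU} --- so there is no in-paper argument to measure yours against, and the proposal has to stand on its own.

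Your outline has the right overall shape (it is the classical exchange-sequence argument, and essentially the strategy behind the cited result), but as written it is a plan rather than a proof: each step that carries the real weight is announced rather than carried out. Concretely, three gaps. (i) The central dichotomy --- that exactly one of ``the minimal left $\add M$-approximation of $X$ is mono'' and ``the minimal right $\add M$-approximation of $X$ is epi'' holds --- is asserted with the remark that the $\tau$-equivalence ``is decisive'', but no mechanism is given. This is exactly where the hypothesis of no nonzero projectives must enter (for $k(1\to 2)$ and $M=P_2$, $X=P_1$, both approximations are degenerate and there is only one complement), and without an actual argument (a Bongartz-type construction, or Proposition~\ref{p:perpendicular-category}(3) applied to a perpendicular category, or the AR-formula $\Ext^1_\mh(A,B)\cong \D\Hom_\mh(B,\tau A)$ used to show $X$ is suitably generated and cogenerated by $M$) the existence of a second complement is not established; nor is mutual exclusivity, which amounts to showing $\Ext^1_\mh(X,Y)$ and $\Ext^1_\mh(Y,X)$ cannot both be nonzero --- not a formal consequence of hereditariness. (ii) The rigidity of the new object $Y$, in particular $\Ext^1_\mh(Y,Y)=0$, and its indecomposability are flagged as ``delicate'' and then skipped; the standard route requires verifying that the second map in the exchange sequence is itself an $\add M$-approximation, which you do not do. (iii) For ``precisely two'', the one-dimensionality of the relevant $\Ext^1$-space and the fact that the middle term of the universal extension through a putative third complement lies in $\add M$ are precisely the content of the classical proof and are left as ``I expect to show''. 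Since the lemma is quoted from the literature this does not affect the paper, but as a standalone proof the proposal is incomplete at its three critical junctures.
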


For a given exceptional object $E$,  the {\it right perpendicular category} $E^{\perp}$ is defined as follows
\[E^{\perp}:=\{X\in \mh~|~\Hom_\mh(E,X)=0=\Ext^1_\mh(E,X)\}.
\]
The following is a basic result on exceptional objects and their perpendicular categories, where $(1)$ is proved in~\cite{HRin}, $(2)$ is taken from~\cite{GL91} and $(3)$ is a result of~\cite{HR}({\it cf.} Proposition~2.1 and Theorem~2.5 of~\cite{HR}).
\begin{proposition}~\label{p:perpendicular-category}
Let $\mh$ be a connected hereditary abelian category with tilting objects and $E$ an exceptional object of $\mh$. Then 
\begin{itemize}
\item[$(1)$] $\End_\mh(E)\cong k$;
\item[$(2)$] $E^\perp$ is a hereditary abelian category;
\item[$(3)$] Assume that $\mh$ does not contain nonzero projective objects. Then there is a tilting object $T$ of $E^\perp$ such that $T\oplus E$ is a tilting object of $\mh$.
\end{itemize} 
\end{proposition}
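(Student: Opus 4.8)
The three parts are largely independent, and I would treat them in order, the last being where the real work lies.

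\emph{Part $(1)$.} The plan is to exploit that $\End_\mh(E)$ is a finite-dimensional local $k$-algebra (local since $E$ is indecomposable and $\mh$ is Krull--Schmidt), so it suffices to rule out nonzero nilpotent endomorphisms. First I would establish the dichotomy, valid in any hereditary category: every nonzero $f\in\End_\mh(E)$ with $\Ext^1_\mh(E,E)=0$ is a monomorphism or an epimorphism. This I prove by factoring $f$ through its image $I$ via $0\to\ker f\to E\to I\to 0$ and $0\to I\to E\to\cok f\to 0$; if $f$ were neither mono nor epi then $\ker f\neq 0\neq\cok f$, and feeding these into the long exact sequences of $\Hom_\mh(E,-)$ and $\Hom_\mh(-,E)$, using $\Ext^2_\mh=0$, produces a non-split self-extension of $E$, contradicting rigidity. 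Granting the dichotomy, suppose $\mathrm{rad}\,\End_\mh(E)\neq 0$ and pick a nonzero nilpotent $f$; for the least $n\geq 2$ with $f^n=0$, the endomorphism $g:=f^{\,n-1}$ is nonzero and satisfies $fg=gf=0$. As $g$ is mono or epi, left-cancelling the mono (or right-cancelling the epi) forces $f=0$, a contradiction. Hence $\mathrm{rad}\,\End_\mh(E)=0$, and $k$ being algebraically closed, $\End_\mh(E)\cong k$.

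\emph{Part $(2)$.} Here I would show $E^\perp$ is closed under kernels and cokernels computed in $\mh$, making it an exact abelian subcategory. Given $f\colon X\to Y$ in $E^\perp$, split it as $0\to\ker f\to X\to\im f\to 0$ and $0\to\im f\to Y\to\cok f\to 0$ and apply $\Hom_\mh(E,-)$. Since $\Hom_\mh(E,X)=\Ext^1_\mh(E,X)=0$ and likewise for $Y$, and since $\Ext^2_\mh(E,-)=0$ by hereditariness, the long exact sequences force $\Hom_\mh(E,-)$ and $\Ext^1_\mh(E,-)$ to vanish on $\ker f,\im f,\cok f$; thus these objects lie in $E^\perp$. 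Finally $E^\perp$ is closed under extensions (again via the long exact sequence and $\Ext^2_\mh=0$), so $\Ext^i_{E^\perp}=\Ext^i_\mh$ on $E^\perp$; as $\Ext^2_\mh$ vanishes, $E^\perp$ is hereditary.

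\emph{Part $(3)$.} This is the substantive step. First I would record, from the perpendicular-category theory of Geigle--Lenzing together with part $(2)$, that $E^\perp$ is a hereditary category with tilting objects and that $\rank\go(E^\perp)=\rank\go(\mh)-1$ (finiteness of ranks is Proposition~\ref{p:structure}$(2)$). Fix a tilting object $T$ of $E^\perp$. Then $E\notin\add T$ (as $\Hom_\mh(E,T)=0$ while $\End_\mh(E)=k\neq 0$), so $|E\oplus T|=1+\rank\go(E^\perp)=\rank\go(\mh)$; by Lemma~\ref{l:tilting-object} it remains only to prove $E\oplus T$ is rigid. Three of the four $\Ext^1$-terms are immediate: $\Ext^1_\mh(E,E)=0$, $\Ext^1_\mh(T,T)=0$ (rigidity of $T$, with Ext-groups agreeing by part $(2)$), and $\Ext^1_\mh(E,T)=0$ ($T\in E^\perp$).

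The delicate vanishing is $\Ext^1_\mh(T,E)=0$, and this is precisely where the hypothesis that $\mh$ has no nonzero projective objects enters, so I expect it to be the main obstacle. By Proposition~\ref{p:structure}$(4)$ the translation $\tau$ is then an auto-equivalence of $\mh$, and the Auslander--Reiten--Serre formula gives $\Ext^1_\mh(T,E)\cong\D\Hom_\mh(E,\tau T)$, reducing the problem to $\Hom_\mh(E,\tau T)=0$. The plan is to arrange this by choosing $T$ suitably rather than arbitrarily: since $E$ is exceptional it is a partial tilting object (Lemma~\ref{l:partial-tilting}), hence admits a complement $E\oplus M$ that is tilting in $\mh$, and I would take $T$ to be the reflection of $M$ into $E^\perp$ built from the universal maps to and from powers of $E$ (the Geigle--Lenzing reflection functor). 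Tracking $\Ext^1_\mh(-,E)$ and $\Ext^1_\mh(E,-)$ through the defining short exact sequences of this reflection, and using $\Ext^1_\mh(M,E)=\Ext^1_\mh(E,M)=0$ together with $\End_\mh(E)=k$ from part $(1)$, should show both that $T$ is tilting in $E^\perp$ and that $\Hom_\mh(E,\tau T)=0$. Verifying that the reflection is exact on the summands of $M$ and preserves the relevant vanishing --- equivalently, pinning down $\Hom_\mh(E,\tau T)=0$ --- is the crux; the rest is formal, with Lemma~\ref{l:tilting-object} upgrading ``rigid of the correct rank'' to ``tilting''.
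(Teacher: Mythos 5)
First, for context: the paper does not prove this proposition at all --- it is assembled from citations ($(1)$ from Happel--Ringel, $(2)$ from Geigle--Lenzing, $(3)$ from Happel--Reiten), so there is no in-paper argument to compare against. Your parts $(1)$ and $(2)$ are correct and are essentially the standard proofs of those cited results: $(1)$ is the Happel--Ringel lemma (mono-or-epi dichotomy for endomorphisms of a rigid object in a hereditary category, then killing nilpotents), and $(2)$ is the usual closure-under-kernels-cokernels-extensions argument for perpendicular categories. Both sketches would survive being written out.

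Part $(3)$, however, has a genuine gap exactly at the point you flag as the crux, and the route you propose for closing it does not work as stated. You propose to take a tilting object $E\oplus M$ of $\mh$ and replace each summand $M_i$ by its ``reflection'' along the universal map $E^{(d_i)}\to M_i$, $d_i=\dim_k\Hom_\mh(E,M_i)$, claiming that tracking $\Ext^1_\mh(-,E)$ through the defining short exact sequence yields the needed vanishing. But when this evaluation map is injective rather than surjective, the sequence $0\to E^{(d_i)}\to M_i\to C_i\to 0$ gives the exact piece $\Hom_\mh(M_i,E)\to\Hom_\mh(E^{(d_i)},E)\to\Ext^1_\mh(C_i,E)\to\Ext^1_\mh(M_i,E)=0$, so $\Ext^1_\mh(C_i,E)$ is the cokernel of the composition pairing and is \emph{nonzero} whenever, say, $\Hom_\mh(M_i,E)=0$. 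This actually happens: in $\coh\X$ take $E=\mathcal{O}$ and $M_i=\mathcal{O}(\vec{x}_1)$ (a summand of the canonical tilting bundle); the evaluation map is the injection $\mathcal{O}\to\mathcal{O}(\vec{x}_1)$ with cokernel $S_{1,0}$, which lies in $\mathcal{O}^\perp$ but has $\Ext^1_\X(S_{1,0},\mathcal{O})\cong\D\Hom_\X(\mathcal{O},S_{1,p_1-1})\cong k\neq 0$. (Equivalently, the left mutation of $M_i$ over $E$ lives in degree $-1$ of the derived category here, so it cannot be fed back into the abelian category naively.) Thus the object your construction produces need not be a complement of $E$, and the existence of a tilting object of $E^\perp$ whose direct sum with $E$ is tilting in $\mh$ --- the actual content of Happel--Reiten's Theorem 2.5, which the paper cites --- is not established by your argument. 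You correctly isolate $\Ext^1_\mh(T,E)=0$ as the only hard vanishing, but the proof of it is missing, not merely deferred.
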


Now let us turn to the bounded derived category $\der^b(\mh)$ of $\mh$.
Let $T\in \mh$ be a basic tilting object of $\mh$. The endomorphism algebra $\Lambda:=\End_\mh(T)$ is called a {\it quasitilted algebra}. It was proved in~\cite{HRS} that the global dimension of $\Lambda$ is at most $2$ and there is a triangle equivalence between $\der^b(\mh)$ and $\der^b(\Lambda)$, where $\der^b(\Lambda)$ is the bounded derived category of the category of finitely generated right $\Lambda$-modules.

 The following basic structure theorem is due to Happel~\cite{H01}.
\begin{theorem}\cite[Theorem 3.1]{H01}~\label{t:classification}
 If $\mh$ is a connected hereditary abelian category with tilting objects, then $\mh$ is derived equivalent to the category $\mod H$ of finite dimensional modules for a finite dimensional hereditary $k$-algebra $H$ or to the category of coherent sheaves $\coh\X$ over a weighted projective line $\X$.
 \end{theorem}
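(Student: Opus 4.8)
The plan is to use the tilting object to transport the problem to the derived category of its endomorphism algebra, and then to read off the two possibilities from the shape of the Auslander-Reiten quiver. Fix a basic tilting object $T\in\mh$ and set $\Lambda=\End_\mh(T)$. As recalled above, $\Lambda$ is a quasitilted algebra of global dimension at most $2$ and there is a triangle equivalence $\der^b(\mh)\simeq\der^b(\Lambda)$; this already reduces the statement to identifying the derived equivalence class of $\Lambda$, and in particular it shows we may freely replace $\mh$ by the heart of any bounded $t$-structure on $\der^b(\mh)$. The decisive dichotomy, which I would treat by means of Proposition~\ref{p:structure}, is whether $\mh$ contains a nonzero projective object.

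In the first case, where $\mh$ does contain nonzero projectives, Proposition~\ref{p:structure}$(3)$ supplies directly an equivalence of abelian categories $\mh\simeq\mod H$ for a finite dimensional hereditary $k$-algebra $H$. Hence $\mh$ is equivalent as an abelian category, and a fortiori derived equivalent, to $\mod H$, and there is nothing further to do. This is the easy half; the entire difficulty is concentrated in the complementary case.

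Now suppose $\mh$ has no nonzero projective objects. By Proposition~\ref{p:structure}$(4)$ the Auslander-Reiten translation $\tau:\mh\to\mh$ is then an autoequivalence, and it lifts to a Serre functor $\mathbb{S}\cong\tau\circ\Sigma$ on $\der^b(\mh)$, so that $\Hom(X,Y)\cong\D\Hom(Y,\mathbb{S}X)$ for all objects $X,Y$. The plan here is to analyse the Auslander-Reiten quiver of $\der^b(\mh)$ one component at a time. Since $\tau$ is invertible, every $\tau$-orbit is either periodic or not; Serre duality then forces the periodic orbits to assemble into standard stable tubes and severely constrains the shapes of the remaining components. Using the rigidity of $T$ together with $|T|=\rank\go(\mh)$ (Lemma~\ref{l:tilting-object}), I would show that the tubes form a separating tubular family while the non-periodic indecomposables organise into components of vector-bundle type, whose global shape is dictated by the spectral behaviour of the Coxeter transformation on $\go(\mh)$. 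This is precisely the Auslander-Reiten picture of the category $\coh\X$ of coherent sheaves over a weighted projective line $\X$, and matching the two combinatorial data---a separating family of tubes of prescribed ranks together with its bundle components---singles out a candidate $\X$.

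The main obstacle is exactly this last step: upgrading a coincidence of Auslander-Reiten combinatorics to a genuine triangle equivalence $\der^b(\mh)\simeq\der^b(\coh\X)$, and reconstructing $\X$ intrinsically. Concretely one must exhibit inside $\der^b(\mh)$ an exceptional sequence, equivalently a tilting object, whose endomorphism algebra is recognisably a canonical algebra, and then invoke the derived equivalence between canonical algebras and $\coh\X$ due to Geigle and Lenzing~\cite{GL1}. The bulk of the work is to control the tubular ranks and the rank of $\go(\mh)$ so that they assemble into admissible weights for some $\X$, and to rule out any component shape incompatible with either $\mod H$ or $\coh\X$. Alternatively, one may phrase the endgame through the structure theory of quasitilted algebras in~\cite{HRS}, according to which a connected quasitilted algebra is either tilted---hence derived equivalent to a hereditary $\mod H$---or of canonical type---hence derived equivalent to $\coh\X$; either way this fine structural analysis is the heart of the classification.
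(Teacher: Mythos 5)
This statement is quoted from Happel's paper~\cite{H01} and the present paper offers no proof of it, so there is nothing internal to compare against; the only question is whether your proposal would stand on its own. It does not. The easy half (when $\mh$ has nonzero projectives, Proposition~\ref{p:structure}$(3)$ gives $\mh\simeq\mod H$) is fine, but the entire content of the theorem lies in the case without projectives, and there your text is a plan rather than an argument. The assertions that Serre duality forces the periodic $\tau$-orbits to form standard stable tubes, that these tubes constitute a \emph{separating} tubular family, that the non-periodic components are ``of vector-bundle type'' with shape governed by the Coxeter transformation, and above all that one can exhibit a tilting object in $\der^b(\mh)$ whose endomorphism algebra is a canonical algebra, are each nontrivial claims that you flag as ``the bulk of the work'' and ``the main obstacle'' without carrying them out. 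That obstacle is precisely Happel's theorem; it occupied a full Inventiones paper, and no amount of Auslander--Reiten combinatorics stated at this level of generality substitutes for it. In particular nothing in your sketch rules out a hypothetical connected hereditary category with tilting object, no projectives, and an AR-quiver not matching either model.

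The suggested alternative endgame is also problematic: the dichotomy ``a connected quasitilted algebra is either tilted or of canonical type'' is not contained in~\cite{HRS} (which only develops the basic theory of quasitilted algebras); it is a later theorem of Happel--Reiten that is essentially equivalent to the classification you are trying to prove, so invoking it here is circular. If you wish to present this result, the honest course is to cite~\cite{H01} as the paper does, or else to reproduce Happel's actual argument, which proceeds by a careful study of the rank of $\go(\mh)$, the existence of exceptional objects of small rank, and an inductive reduction --- none of which appears in your proposal.
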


\subsection{Cluster categories associated with hereditary categories}

Let $\mh$ be a hereditary abelian category with tilting objects. Let $\der^b(\mh)$ be the bounded derived category of $\mh$ with suspension functor $\Sigma$. Denote by $\tau$ the Auslander-Reiten translation of $\der^b(\mh)$ and $\mathbb{S}:=\tau\circ \Sigma$ the Serre functor. Recall that $\mathbb{S}_2=\mathbb{S}\circ \Sigma^{-2}$. The {\it cluster category} $\mc(\mh)$ is defined as the orbit category $\der^b(\mh)/\langle \mathbb{S}_2 \rangle$; it has the same objects as $\der^b(\mh)$, morphism spaces are given by $\bigoplus_{i\in \Z}\Hom_{\der^b(\mh)}(X, \mathbb{S}_2^iY)$ with obvious composition. The cluster category $\mc(\mh)$ has a canonical triangle structure such that the projection $\pi:\der^b(\mh)\to \mc(\mh)$ is a triangle functor~\cite{K}. Moreover, $\mc(\mh)$ is a $2$-Calabi-Yau triangulated category, {\it i.e.} for any $X, Y\in \mc(\mh)$, we have bifunctorially isomorphisms
\[\Hom_{\mc(\mh)}(X, \Sigma^2 Y)\cong \mathbb{D}\Hom_{\mc(\mh)}(Y,X).
\]
  If $\mh$ does not contain nonzero projective objects, the composition of the embedding of $\mh$ into $\der^b(\mh)$ with the projection $\pi:\der^b(\mh)\to \mc(\mh)$ yields a bijection between the set of indecomposable objects of $\mh$ and the set of indecomposable objects of $\mc(\mh)$. We may identify the objects of $\mh$ with the ones of $\mc(\mh)$ by the bijection in this case.

The following is a consequence of the universal property of the cluster category~\cite{K} ({\it cf.} also ~\cite{Zhu06}).
\begin{lemma}~\label{l:universal-property}
Let $\mh$ and $\mh'$ be two hereditary abelian categories with tilting objects. If $\der^b(\mh)\cong \der^b(\mh')$, then $\mc(\mh)\cong \mc(\mh')$.
\end{lemma}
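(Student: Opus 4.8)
The plan is to descend a triangle equivalence between the derived categories to the orbit categories by invoking the universal property of the cluster category established in~\cite{K}. Fix a triangle equivalence $F\colon \der^b(\mh)\xrightarrow{\sim}\der^b(\mh')$. Write $\Sigma,\mathbb{S},\mathbb{S}_2$ and $\Sigma',\mathbb{S}',\mathbb{S}'_2$ for the suspension functors, the Serre functors, and the autoequivalences $\mathbb{S}\circ\Sigma^{-2}$ and $\mathbb{S}'\circ(\Sigma')^{-2}$ on $\der^b(\mh)$ and $\der^b(\mh')$ respectively, and let $\pi,\pi'$ denote the two projections onto the cluster categories. The first task is to show that $F$ is compatible with the defining autoequivalences, that is, $F\circ\mathbb{S}_2\cong\mathbb{S}'_2\circ F$ as triangle functors.

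For this, recall two standard facts. First, being a triangle functor, $F$ comes equipped with a natural isomorphism $F\circ\Sigma\cong\Sigma'\circ F$, hence $F\circ\Sigma^{-2}\cong(\Sigma')^{-2}\circ F$. Second, a triangle equivalence commutes with the Serre functors up to natural isomorphism: for objects $X,Y$ one has a chain of bifunctorial isomorphisms
\[\Hom(FX,\mathbb{S}'FY)\cong \D\Hom(FY,FX)\cong \D\Hom(Y,X)\cong\Hom(X,\mathbb{S}Y)\cong\Hom(FX,F\mathbb{S}Y),\]
so Yoneda's lemma yields $\mathbb{S}'\circ F\cong F\circ\mathbb{S}$. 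Combining these gives $F\circ\mathbb{S}_2=F\circ\mathbb{S}\circ\Sigma^{-2}\cong\mathbb{S}'\circ F\circ\Sigma^{-2}\cong\mathbb{S}'\circ(\Sigma')^{-2}\circ F=\mathbb{S}'_2\circ F$.

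With this compatibility in hand, consider the triangle functor $\pi'\circ F\colon\der^b(\mh)\to\mc(\mh')$. Since $\pi'$ carries a canonical isomorphism $\pi'\circ\mathbb{S}'_2\cong\pi'$, composing it with the isomorphism $F\circ\mathbb{S}_2\cong\mathbb{S}'_2\circ F$ produces a natural isomorphism $(\pi'\circ F)\circ\mathbb{S}_2\cong\pi'\circ F$. The universal property of the projection $\pi\colon\der^b(\mh)\to\mc(\mh)$ from~\cite{K} then furnishes a triangle functor $\bar F\colon\mc(\mh)\to\mc(\mh')$, unique up to isomorphism, with $\bar F\circ\pi\cong\pi'\circ F$. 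Applying the same construction to a quasi-inverse of $F$ produces a triangle functor in the opposite direction, and the uniqueness clause of the universal property identifies the two composites with the respective identity functors; hence $\bar F$ is a triangle equivalence $\mc(\mh)\xrightarrow{\sim}\mc(\mh')$.

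I expect the only delicate point to be the verification that the natural isomorphism $(\pi'\circ F)\circ\mathbb{S}_2\cong\pi'\circ F$ is coherent in the precise sense required to apply Keller's universal property, namely that it is compatible with the suspension and with the structural isomorphisms of $\pi$ and $\pi'$; once this bookkeeping is settled, the construction of $\bar F$ and its quasi-inverse is formal. As an alternative to the uniqueness argument, one can check directly that $\bar F$ is an equivalence: on morphism spaces it is the map
\[\bigoplus_{i\in\Z}\Hom_{\der^b(\mh)}(X,\mathbb{S}_2^iY)\longrightarrow\bigoplus_{i\in\Z}\Hom_{\der^b(\mh')}(FX,(\mathbb{S}'_2)^iFY)\]
induced by $F$ together with the isomorphism $F\circ\mathbb{S}_2\cong\mathbb{S}'_2\circ F$, which is bijective because $F$ is fully faithful, while essential surjectivity of $\bar F$ is immediate from that of $F$.
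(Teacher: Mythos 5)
Your argument is correct and follows exactly the route the paper intends: the paper gives no written proof of this lemma, merely citing the universal property of the cluster category from Keller's paper (and Zhu's), and your write-up fills in precisely those details — commuting $F$ past the Serre functor via Yoneda, descending through the orbit-category universal property, and checking fullness/faithfulness on the orbit morphism spaces. The coherence caveat you flag is the genuine subtlety hidden in the citation, and your handling of it is appropriate.
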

The following was proved in~\cite{BMRRT} ({\it cf.} Section 3 of~\cite{BMRRT}).
\begin{lemma}~\label{l:tilting-to-cluster}
Let $\mh$ be a hereditary abelian category with a tilting object $T$. Then
\begin{itemize}
\item[(1)] $M\in \mh$ is rigid in $\mh$ if and only if $\pi(M)$ is rigid in $\mc(\mh)$;
\item[(2)] $\pi(T)$ is a cluster-tilting object of $\mc(\mh)$;
\item[(3)] For each rigid object $M$ of $\mc(\mh)$, there is an object $N\in \mc(\mh)$ such that $M\oplus N$ is a cluster-tilting object.
\end{itemize}
\end{lemma}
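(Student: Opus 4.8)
The plan is to unfold each assertion through the definition of the orbit category $\mc(\mh)=\der^b(\mh)/\langle \mathbb{S}_2\rangle$ and to reduce it to homological algebra inside the hereditary category $\mh$. I will repeatedly use three facts: that $\der^b(\mh)$ is hereditary, so that $\Hom_{\der^b(\mh)}(X,\Sigma^n Y)=\Ext^n_\mh(X,Y)$ vanishes for $X,Y\in\mh$ unless $n\in\{0,1\}$; the Serre duality $\Hom_{\der^b(\mh)}(X,\mathbb{S}Y)\cong \mathbb{D}\Hom_{\der^b(\mh)}(Y,X)$; and the identity $\mathbb{S}_2=\mathbb{S}\Sigma^{-2}=\tau\Sigma^{-1}$, whence $\mathbb{S}_2^{\,i}\Sigma M=\tau^{i}\Sigma^{1-i}M$.

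For (1), I would compute
\[
\Ext^1_{\mc(\mh)}(\pi M,\pi M)=\Hom_{\mc(\mh)}(\pi M,\Sigma\pi M)=\bigoplus_{i\in\Z}\Hom_{\der^b(\mh)}(M,\tau^{i}\Sigma^{1-i}M).
\]
The summand $i=0$ is exactly $\Ext^1_\mh(M,M)$, and since it is a direct summand of the left-hand side this already yields the ``if'' direction: rigidity of $\pi M$ forces $\Ext^1_\mh(M,M)=0$. For the converse I first treat the case where $\mh$ has no nonzero projectives, which by Proposition~\ref{p:structure}(4) is the case where $\tau$ restricts to an autoequivalence of $\mh$; then $\tau^{i}M\in\mh$, so the $i$-th summand equals $\Ext^{1-i}_\mh(M,\tau^{i}M)$ and vanishes unless $i\in\{0,1\}$, with the $i=1$ term equal to $\Hom_\mh(M,\tau M)\cong\mathbb{D}\Ext^1_\mh(M,M)$ by Serre duality. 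Hence both surviving terms vanish precisely when $M$ is rigid. In the remaining case $\mh\cong\mod H$ (Proposition~\ref{p:structure}(3)) the functor $\tau$ no longer preserves $\mh$, but each $\tau^{i}\Sigma^{1-i}M$ is still a shift $\Sigma^{m_i}(\text{object of }\mh)$, and tracking the exponent $m_i$ (projectives being sent to shifted injectives) shows again that only $i\in\{0,1\}$ can contribute; this is the computation carried out in~\cite{BMRRT}.

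For (2), note first that $\pi T$ is rigid by (1); I would then show directly that $\Ext^1_{\mc(\mh)}(\pi T,\pi X)=0$ for an indecomposable $X$ forces $X\in\add T$ (the case of general $X$ reduces to this by decomposing $X$ into indecomposables). The same unfolding gives, in the projective-free case,
\[
\Ext^1_{\mc(\mh)}(\pi T,\pi X)\cong\Ext^1_\mh(T,X)\oplus\Hom_\mh(T,\tau X)\cong\Ext^1_\mh(T,X)\oplus\mathbb{D}\Ext^1_\mh(X,T),
\]
so the hypothesis yields $\Ext^1_\mh(T,X)=0=\Ext^1_\mh(X,T)$. Now $\Ext^1_\mh(T,X)=0$ places $X$ in the torsion class $\Fac(T)$ attached to the tilting object $T$; since $\mh$ is hereditary, $\Ext^1_\mh(X,T)=0$ then upgrades to $\Ext^1_\mh(X,Y)=0$ for every $Y\in\Fac(T)$ (apply $\Hom_\mh(X,-)$ to a surjection $T^{m}\twoheadrightarrow Y$ and use $\Ext^2_\mh=0$), i.e.\ $X$ is $\mathrm{Ext}$-projective in $\Fac(T)$. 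By the classical description of the $\mathrm{Ext}$-projectives of $\Fac(T)$, these are exactly the summands of $T$, so $X\in\add T$; together with the rigidity of $\pi T$ this is precisely the definition of a cluster-tilting object. The case $\mh\cong\mod H$ is reduced to this one after accounting for the shifts of projective objects in a fundamental domain, as in~\cite{BMRRT}.

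For (3), let $M\in\mc(\mh)$ be rigid; I may assume $M$ indecomposable. In the projective-free case $M=\pi M_0$ for a rigid $M_0\in\mh$ by the proof of (1), and Lemma~\ref{l:partial-tilting} provides $N_0\in\mh$ with $M_0\oplus N_0$ a tilting object; then part (2) shows $M\oplus\pi N_0$ is cluster-tilting, so $N=\pi N_0$ works. In general one first transports $M$, by an autoequivalence of $\der^b(\mh)$ coming from Theorem~\ref{t:classification} and Lemma~\ref{l:universal-property}, into the image of a hereditary category to which the previous argument applies. I expect the main obstacle throughout to be the collapse of the a priori infinite direct sum $\bigoplus_{i\in\Z}\Hom_{\der^b(\mh)}(M,\mathbb{S}_2^{\,i}\Sigma M)$ to the two indices $i\in\{0,1\}$, and in particular the bookkeeping of shift degrees in the case $\mh\cong\mod H$, where $\tau$ fails to preserve $\mh$; once this vanishing is in hand, parts (1)--(3) follow from standard tilting theory together with Lemmas~\ref{l:tilting-object} and~\ref{l:partial-tilting}.
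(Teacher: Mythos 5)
The paper does not actually prove this lemma: it is stated with a bare citation to Section~3 of~\cite{BMRRT}, so there is no in-house argument to compare against. What you have written is essentially a reconstruction of the BMRRT argument, and in the case where $\mh$ has no nonzero projectives it is sound: the collapse of $\bigoplus_{i}\Hom_{\der^b(\mh)}(M,\tau^i\Sigma^{1-i}M)$ to the indices $i\in\{0,1\}$, the identification of the $i=1$ term with $\D\Ext^1_\mh(M,M)$ via Serre duality, and the torsion-theoretic argument for (2) (that $\Ext^1_\mh(T,X)=0$ forces $X\in\Fac T$ and that the $\Ext$-projectives of $\Fac T$ are exactly $\add T$) are all correct and are the standard route. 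Two points deserve repair. First, in (3) you write ``I may assume $M$ indecomposable,'' but the statement asks for a complement of the \emph{whole} rigid object $M$, and completing indecomposable summands one at a time does not produce one; fortunately your argument needs no such reduction --- for any rigid $M=\pi M_0$ the object $M_0$ is rigid in $\mh$ by the summand argument of (1), and Lemma~\ref{l:partial-tilting} applies directly. Second, your treatment of the case $\mh\cong\mod H$ is the weakest link: the indecomposables $\pi(\Sigma Q)$ with $Q$ projective are not in the image of $\mh$, so a rigid object of $\mc(\mod H)$ need not lift to a rigid object of $\mod H$ itself, and the device you invoke is not an \emph{auto}equivalence of $\der^b(\mh)$ but a derived equivalence $\der^b(\mod H)\simeq\der^b(\mod H')$ obtained by tilting, combined with Lemma~\ref{l:universal-property} to identify the two cluster categories; this is exactly the content of~\cite[Section~3]{BMRRT} that you defer to, which is acceptable given that the paper does the same, but it should be stated as a derived equivalence to a possibly different hereditary algebra rather than an autoequivalence.
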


\subsection{Cluster-tilting graphs associated with $\mh$}
 Let $\mt$ be a Hom-finite triangulated category over $k$ with suspension functor $\Sigma$. Assume that $\mt$ has a Serre functor $\mathbb{S}$. Let $\mv$ be a full subcategory of $\mt$ and $X$ an object of $\mt$. A morphism $f_X:X\to V_X$ with $V_X\in \mv$ is a {\it left $\mv$-approximation} of $X$, if  the morphism $\Hom_\mt(f_X, V): \Hom_\mt(V_X, V)\to\Hom_\mt(X, V)$ is surjective for any object $V\in \mv$. Dually, a morphism $g_X:V_X'\to X$ with $V_X'\in \mv$ is a {\it right $\mv$-approximation} of $X$, if the morphism $\Hom_\mt(V, g_X):\Hom_\mt(V, V_X')\to \Hom_\mt(V, X)$ is surjective for any object $V\in \mv$. The subcategory $\mv$ is {\it covariantly finite} (resp. {\it contravariantly finite}) in $\mt$ if every object in $\mt$ has a left (resp. right) $\mv$-approximation.
 We say that $\mv$ is {\it functorially finite} if it is both covariantly finite and contravariantly finite in $\mt$.
  We need the following definition of cluster-tilting subcategories for triangulated categories, which is a generalization of cluster-tilting objects for $2$-Calabi-Yau triangulated categories~({\it cf. ~\cite{IY}}).

\begin{definition}
A functorially finite subcategory $\mv$ of $\mt$ is {\it cluster-tilting} if 
\[\mv=\{X\in \mt~|~\Hom_\mt(X,\Sigma \mv)=0\}=\{Y\in \mt~|~\Hom_\mt(\mv, \Sigma Y)=0\}.
\]
An object $T\in \mt$ is {\it cluster-tilting} if the subcategory $\add T$ is cluster-tilting.
\end{definition}

It is not hard to show that  $\mathbb{S}_2\mv=\mv$ for any cluster-tilting subcategory $\mv$ of $\mt$.

\begin{theorem}{\cite[Theorem 5.3]{IY}}
Let $\mt$ be a $\Hom$-finite triangulated category with Serre functor $\mathbb{S}$ and $\mv$ a cluster-tilting subcategory of $\mt$. Let $X\in \mv$ be an indecomposable object and set
\[\mv':=\add(\ind \mv)\backslash\{\mathbb{S}_2^pX~|~p\in \Z\},
\]
where $\ind(\mv)$ denotes the indecomposable objects in $\mv$. Then there exists a unique cluster-tilting subcategory $\mv^*$ with $\mv'\subseteq \mv^*\neq \mv$. Moreover, $\mv$ and $\mv^*$ differ exactly a single $\mathbb{S}_2$-orbit.
\end{theorem}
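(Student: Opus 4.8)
The statement is the mutation (exchange) theorem for cluster-tilting subcategories, and the plan is to produce the new subcategory $\mv^*$ explicitly by an exchange triangle and then verify the defining properties directly. Write $\mathcal{O} := \{\mathbb{S}_2^p X \mid p \in \Z\}$ for the orbit to be removed, so that $\mv' = \add(\ind\mv \setminus \mathcal{O})$. Since $\mathbb{S}_2$ is an autoequivalence with $\mathbb{S}_2\mv = \mv$, the subcategory $\mv'$ is stable under $\mathbb{S}_2$, is rigid (being a subcategory of the rigid $\mv$), and inherits functorial finiteness from $\mv$ after discarding the orbit $\mathcal{O}$; in particular every object admits right and left $\mv'$-approximations. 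First I would choose a right $\mv'$-approximation $g\colon B \to X$ with $B \in \mv'$ and complete it to a triangle
\[ X^* \xrightarrow{a} B \xrightarrow{g} X \xrightarrow{b} \Sigma X^*, \]
defining the candidate new indecomposable $X^*$ and the candidate subcategory $\mv^* := \add(\mv' \cup \{\mathbb{S}_2^p X^* \mid p \in \Z\})$.

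Next I would verify $\Hom_\mt(\mv^*, \Sigma\mv^*) = 0$. Since $\mv' \subseteq \mv$ is rigid and $\Hom_\mt(\mv, \Sigma\mv) = 0$, only the terms involving $X^*$ need attention. For $M \in \mv'$, applying $\Hom_\mt(M, -)$ to the exchange triangle gives the exact sequence
\[ \Hom_\mt(M, B) \xrightarrow{g_*} \Hom_\mt(M, X) \to \Hom_\mt(M, \Sigma X^*) \to \Hom_\mt(M, \Sigma B), \]
where $g_*$ is surjective precisely because $g$ is a right $\mv'$-approximation and $\Hom_\mt(M, \Sigma B) = 0$ by rigidity; hence $\Hom_\mt(\mv', \Sigma X^*) = 0$. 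The symmetric vanishing $\Hom_\mt(X^*, \Sigma\mv') = 0$ then follows from Serre duality together with the $\mathbb{S}_2$-stability of $\mv'$, and the self-orthogonality $\Hom_\mt(X^*, \Sigma X^*) = 0$ comes from feeding these vanishings back into the long exact sequences attached to the same triangle (taking $g$ minimal). Because $\mv^*$ is built from $\mv'$ and the $\mathbb{S}_2$-orbit of $X^*$ and the relevant Hom-spaces are $\mathbb{S}_2$-invariant, this yields $\Hom_\mt(\mv^*, \Sigma\mv^*) = 0$.

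The heart of the argument is the maximality condition: every $Z$ with $\Hom_\mt(\mv^*, \Sigma Z) = 0$ must lie in $\mv^*$ (and symmetrically for $\Hom_\mt(Z,\Sigma\mv^*)$). Here I would use that $\mv$ itself is cluster-tilting to resolve $Z$: there is a triangle $Z \to V^0 \to V^1 \to \Sigma Z$ with $V^0, V^1 \in \mv$ obtained from $\mv$-approximations. Splitting off the $\mathcal{O}$-components of the $V^i$ and replacing each occurrence of $X$ by the exchange triangle, an octahedral diagram chase — controlled by the hypothesis $\Hom_\mt(X^*, \Sigma Z) = 0$ — rebuilds $Z$ from $\mv'$ and $X^*$, placing $Z \in \mv^*$; the same resolutions show $\mv^*$ is functorially finite. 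This step is where the $\mathbb{S}_2$/Serre-functor bookkeeping is most delicate and is, I expect, the main obstacle.

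Finally, for uniqueness and the last clause, I would produce the dual exchange triangle $X \to B' \to X^* \to \Sigma X$ from a left $\mv'$-approximation of $X$, which exhibits $X$ and $X^*$ symmetrically and shows that $\mv$ and $\mv^*$ differ in exactly the single $\mathbb{S}_2$-orbit of $X$ versus $X^*$. If $\mv''$ is any cluster-tilting subcategory with $\mv' \subseteq \mv'' \neq \mv$, then any indecomposable $X'' \in \mv'' \setminus \mv'$ satisfies $\Hom_\mt(\mv', \Sigma X'') = 0 = \Hom_\mt(X'', \Sigma\mv')$, so the exchange triangle forces $X'' \cong \mathbb{S}_2^p X^*$ for some $p$; hence $\mv'' = \mv^*$, proving both the existence of a unique such $\mv^*$ and that the two completions of $\mv'$ to a cluster-tilting subcategory are exactly $\mathcal{O}$ and the orbit of $X^*$.
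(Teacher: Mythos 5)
The first thing to say is that the paper does not prove this statement at all: it is quoted verbatim from Iyama--Yoshino \cite[Theorem 5.3]{IY}, and the proof there is not the direct exchange-triangle argument you attempt. Iyama and Yoshino apply their reduction theorem (Theorem~\ref{t:IY-reduction} in this paper) to $\mathcal{U}=\mv'$: cluster-tilting subcategories of $\mt$ containing $\mv'$ correspond bijectively to cluster-tilting subcategories of the subfactor triangulated category $\mt_{\mv'}$, in which the image of the orbit of $X$ is itself a cluster-tilting subcategory; the theorem is thereby reduced to showing that a triangulated category whose cluster-tilting subcategory is a single $\mathbb{S}_2$-orbit of one indecomposable object admits exactly two cluster-tilting subcategories. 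That reduction is precisely what makes the rigidity, maximality and uniqueness steps tractable, and it is the idea your sketch is missing.

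Measured against that, your proposal has genuine gaps beyond bookkeeping. (i) Functorial finiteness of $\mv'$ (needed even to form the approximation triangle defining $X^*$) and of $\mv^*$ is asserted, not proved; deleting an $\mathbb{S}_2$-orbit from a functorially finite subcategory does not obviously preserve the property. (ii) Rigidity of $\mv^*$ is not established: a cluster-tilting subcategory here consists of $\mv'$ together with an entire $\mathbb{S}_2$-orbit, so you must show $\Hom_\mt(\mathbb{S}_2^r X^*,\Sigma X^*)=0$ for \emph{every} $r\in\Z$. The $\mathbb{S}_2$-invariance you invoke only identifies $\Hom_\mt(\mathbb{S}_2^r X^*,\Sigma\mathbb{S}_2^r X^*)$ with $\Hom_\mt(X^*,\Sigma X^*)$; the mixed terms with $r\neq 0$ are untouched, and even the case $r=0$ is not closed, since the long exact sequences you cite only embed $\Hom_\mt(X^*,\Sigma X^*)$ into $\Hom_\mt(X,\Sigma^2 X^*)$, which is not known to vanish. (These issues are invisible in the $2$-Calabi--Yau case, where $\mathbb{S}_2$ is the identity and each orbit is a single object, but the theorem is applied in this paper to $\der^b(\mh)$, where the orbits are infinite.) (iii) The maximality condition, which you correctly identify as the heart of the matter, is left as ``an octahedral diagram chase'' with no actual argument, and the final uniqueness step is circular: ``the exchange triangle forces $X''\cong\mathbb{S}_2^pX^*$'' is the assertion to be proved, not a reason. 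All three gaps are filled in \cite{IY} by passing to the reduction $\mt_{\mv'}$ rather than arguing directly in $\mt$.
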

Now we can introduce the definition of cluster-tilting graphs for triangulated categories. 
\begin{definition}~\label{d:cluster-tilting-graph}
 Let $\mt$ be a $\Hom$-finite triangulated category with Serre functor $\mathbb{S}$.
The cluster-tilting graph $\mathcal{G}_{ct}(\mt)$ of $\mt$ has as vertices the isomorphism classes of cluster-tilting subcategories of $\mt$. Two vertices $\mv$ and $\mw$ are connected by an edge if and only if they differ by exactly a single $\mathbb{S}_2$-orbit.
\end{definition}
For a hereditary abelian category $\mh$, we obtain the cluster-tilting graph $\mathcal{G}_{ct}(\der^b(\mh))$ of $\der^b(\mh)$ and the cluster-tilting graph $\mathcal{G}_{ct}(\mc(\mh))$ of $\mc(\mh)$.
The following gives the relation between the cluster-tilting graph $\mathcal{G}_{ct}(\der^b(\mh))$ and the cluster-tilting graph $\mathcal{G}_{ct}(\mc(\mh))$.
\begin{proposition}~\label{p:iso-ct-graph}
Let $\mh$ be a hereditary abelian category with tilting objects. The projection functor $\pi:\der^b(\mh)\to \mc(\mh)$ yields a bijection between the set of cluster-tilting subcategories of $\der^b(\mh)$ and the set of basic cluster-tilting objects of $\mc(\mh)$. Moreover, we have $\mathcal{G}_{ct}(\der^b(\mh))\cong \mathcal{G}_{ct}(\mc(\mh))$.
\end{proposition}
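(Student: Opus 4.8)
The plan is to exploit the orbit-category description $\mc(\mh)=\der^b(\mh)/\langle\mathbb{S}_2\rangle$ together with the fact that, since $\mc(\mh)$ is $2$-Calabi--Yau, its Serre functor is $\Sigma^2$ and hence its associated functor $\mathbb{S}_2$ is the identity. Consequently the $\mathbb{S}_2$-orbits in $\mc(\mh)$ are singletons: a cluster-tilting subcategory of $\mc(\mh)$ is precisely $\add T$ for a basic cluster-tilting object $T$, and two such subcategories differ by a single $\mathbb{S}_2$-orbit exactly when the corresponding objects differ by one indecomposable summand. Thus $\mathcal{G}_{ct}(\mc(\mh))$ in the sense of Definition~\ref{d:cluster-tilting-graph} coincides with the cluster-tilting graph built from cluster-tilting \emph{objects}, and it suffices to match it with $\mathcal{G}_{ct}(\der^b(\mh))$.

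First I would set up the two maps. Every cluster-tilting subcategory $\mv\subseteq\der^b(\mh)$ satisfies $\mathbb{S}_2\mv=\mv$, so $\pi$ collapses each $\mathbb{S}_2$-orbit of indecomposables in $\mv$ to a single indecomposable of $\mc(\mh)$; set $\Phi(\mv)=\pi(\mv)$. Conversely, for a basic cluster-tilting object $T$ of $\mc(\mh)$ with indecomposable summands $X_1,\dots,X_r$, lift each to $\der^b(\mh)$ and put $\Psi(T)=\add\{\mathbb{S}_2^i X_j\mid i\in\Z,\,1\le j\le r\}$, the $\mathbb{S}_2$-closure. These are visibly inverse to one another once well-definedness is established.

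The heart of the argument is the Hom-space translation furnished by the orbit construction: for $X,Y\in\der^b(\mh)$,
\[\Hom_{\mc(\mh)}(X,\Sigma Y)=\bigoplus_{i\in\Z}\Hom_{\der^b(\mh)}(X,\Sigma\mathbb{S}_2^{i}Y),\]
using that $\Sigma$ commutes with $\mathbb{S}_2$. Because $\mv$ is $\mathbb{S}_2$-stable, the family $\{\Sigma\mathbb{S}_2^i Y\}_{i}$ sweeps out $\Sigma\mv$ as $Y$ ranges over orbit representatives, so $\Hom_{\mc(\mh)}(\Phi(\mv),\Sigma\Phi(\mv))=0$ if and only if $\Hom_{\der^b(\mh)}(\mv,\Sigma\mv)=0$; the same bookkeeping turns each of the orthogonality conditions defining a cluster-tilting subcategory of $\der^b(\mh)$ into the cluster-tilting condition for $\add\Phi(\mv)$ in $\mc(\mh)$, and conversely. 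Here the $2$-Calabi--Yau property of $\mc(\mh)$ is what reconciles the two a priori distinct orthogonality conditions in $\der^b(\mh)$ into a single one downstairs. I would also note that $\add T$ is automatically functorially finite in a $\Hom$-finite Krull--Schmidt category, and that finiteness of the number of $\mathbb{S}_2$-orbits in $\mv$ follows since all cluster-tilting subcategories of $\mc(\mh)$ have $\rank\go(\mh)$ indecomposables by~\cite{DK}; hence $\Phi$ indeed lands among genuine basic cluster-tilting objects.

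Finally, for the graph isomorphism I would check compatibility with mutation. By the Iyama--Yoshino theorem recalled above, an edge of $\mathcal{G}_{ct}(\der^b(\mh))$ joins subcategories $\mv$ and $\mv^{*}$ differing in exactly one $\mathbb{S}_2$-orbit; applying $\Phi$ removes and replaces exactly one indecomposable summand, producing an edge of $\mathcal{G}_{ct}(\mc(\mh))$, and $\Psi$ reverses this. I expect the main obstacle to be the transfer of functorial finiteness across the orbit functor $\pi$ together with the orbit-counting needed to guarantee that $\Phi(\mv)$ is a finite (basic) object; by contrast, the rigidity and maximality conditions drop out cleanly from the displayed $\Hom$ identity.
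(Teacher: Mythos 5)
Your argument is essentially correct, but it follows a genuinely different route from the paper: the paper disposes of the bijection in one line by citing Zhu's result \cite[Theorem 4.5]{Zhu} and then only records the observation that two cluster-tilting objects $T,T'$ of $\mc(\mh)$ differ by one indecomposable summand exactly when $\pi^{-1}(\add T)$ and $\pi^{-1}(\add T')$ differ by one $\mathbb{S}_2$-orbit, which is also your final step. What you do instead is reprove the cited bijection from scratch via the orbit-category $\Hom$-formula $\Hom_{\mc(\mh)}(X,\Sigma Y)=\bigoplus_{i}\Hom_{\der^b(\mh)}(X,\Sigma\mathbb{S}_2^{i}Y)$ and the $\mathbb{S}_2$-stability of cluster-tilting subcategories; this is self-contained and makes the mechanism transparent, at the cost of having to verify the technical points yourself. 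Two such points deserve more than the passing mention you give them. First, the functorial finiteness of $\Psi(T)=\pi^{-1}(\add T)$ in $\der^b(\mh)$: this subcategory has infinitely many indecomposables, so finiteness of approximations is not automatic; it follows because, $\mh$ being hereditary, $\bigoplus_{i}\Hom_{\der^b(\mh)}(X,\mathbb{S}_2^{i}T_j)$ is finite dimensional for each $X$, so only finitely many orbit representatives contribute and a finite (minimal) approximation can be assembled. Second, to know that $\Phi(\mv)$ is $\add$ of an honest \emph{object} you need $\mv$ to have finitely many $\mathbb{S}_2$-orbits; your appeal to \cite{DK} works, but only after you have exhibited one cluster-tilting object of $\mc(\mh)$ with finitely many summands (the image of a tilting object of $\mh$, Lemma~\ref{l:tilting-to-cluster}), since \cite{DK} compares an unknown cluster-tilting subcategory against a known finite one. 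With those details filled in, your proof is a valid replacement for the citation, and the edge-correspondence argument at the end agrees with the paper's.
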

\begin{proof}
The bijection has been established by ~\cite[Theorem 4.5]{Zhu}. For the isomorphism of the cluster-tilting graphs, it suffices to note that the cluster-tilting objects $T$ and $T'$ of $\mc(\mh)$ differ by exactly one indecomposable direct summand if and only if $\pi^{-1}(\add T)$ and $\pi^{-1}(\add T')$ differ by exactly one $\mathbb{S}_2$-orbit.
\end{proof}

\begin{proposition}~\label{p:bijection-tilting-cluster-tilting}
Let $\mh$ be a connected hereditary abelian category with tilting objects. If $\mh$ does not contain nonzero projective objects, the projection $\pi:\der^b(\mh)\to \mc(\mh)$ yields a bijection between the set of basic tilting objects of $\mh$ and the set of basic cluster-tilting objects of $\mc(\mh)$. Consequently,
 \[\mathcal{G}_{t}(\mh)\cong \mathcal{G}_{ct}(\mc(\mh))\cong\mathcal{G}_{ct}(\der^b(\mh)).\]
\end{proposition}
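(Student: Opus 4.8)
The plan is to build the bijection summand-by-summand, using the fact recalled above that when $\mh$ contains no nonzero projective objects the projection $\pi$ restricts to a bijection between the indecomposable objects of $\mh$ and those of $\mc(\mh)$. Write $n:=\rank\go(\mh)$. Since $\pi$ is additive and bijective on indecomposables, it sends distinct indecomposable summands to distinct indecomposable objects; hence for any $M\in\mh$ the object $\pi(M)$ is basic if and only if $M$ is basic, and in that case $|\pi(M)|=|M|$. All the substantive properties I need are already packaged in the lemmas above, so most of the argument is bookkeeping against the bijection on indecomposables.

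First I would check that $T\mapsto\pi(T)$ is a well-defined map from basic tilting objects to basic cluster-tilting objects: if $T$ is a basic tilting object, then $\pi(T)$ is cluster-tilting by Lemma~\ref{l:tilting-to-cluster}(2), and it is basic by the previous remark. Injectivity is immediate, since two basic tilting objects with isomorphic images have the same indecomposable summands under the bijection on indecomposables, hence are isomorphic. For surjectivity, let $U$ be a basic cluster-tilting object of $\mc(\mh)$. Pulling each indecomposable summand of $U$ back along the bijection on indecomposables produces an object $M\in\mh$, unique up to isomorphism, with $\pi(M)\cong U$ and $|M|=|U|$. As $U$ is cluster-tilting it is in particular rigid, so $M$ is rigid in $\mh$ by Lemma~\ref{l:tilting-to-cluster}(1). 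It remains to compare summand counts: every basic cluster-tilting object of $\mc(\mh)$ has the same number of indecomposable summands, and this common number equals $|\pi(T_0)|=|T_0|=n$ for any basic tilting object $T_0$ of $\mh$ (one exists by hypothesis). Thus $|M|=n$, and Lemma~\ref{l:tilting-object} shows that the rigid object $M$ with $|M|=n$ is a tilting object of $\mh$. This furnishes the required preimage and completes the bijection.

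Finally I would promote this to an isomorphism of graphs. On $\mc(\mh)=\der^b(\mh)/\langle\mathbb{S}_2\rangle$ the functor $\mathbb{S}_2$ is isomorphic to the identity, so an $\mathbb{S}_2$-orbit in $\mc(\mh)$ consists of a single indecomposable object; consequently two basic cluster-tilting objects of $\mc(\mh)$ are joined by an edge of $\mathcal{G}_{ct}(\mc(\mh))$ precisely when they differ by exactly one indecomposable summand, which is exactly the adjacency relation defining $\mathcal{G}_t(\mh)$. Because the vertex bijection is induced by an additive functor that is a bijection on indecomposables, it carries direct-sum decompositions into indecomposables to such decompositions, and therefore preserves the relation ``differ by exactly one indecomposable summand'' in both directions. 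Hence $\mathcal{G}_t(\mh)\cong\mathcal{G}_{ct}(\mc(\mh))$, and composing with Proposition~\ref{p:iso-ct-graph} yields the full chain $\mathcal{G}_t(\mh)\cong\mathcal{G}_{ct}(\mc(\mh))\cong\mathcal{G}_{ct}(\der^b(\mh))$.

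The only step requiring genuine input beyond bookkeeping is the surjectivity argument, and within it the comparison of summand counts: one must know that all basic cluster-tilting objects of the $2$-Calabi-Yau category $\mc(\mh)$ share the common cardinality $n$, so that rigidity alone, via Lemma~\ref{l:tilting-object}, forces the pulled-back object $M$ to be tilting rather than merely partial tilting. This invariance of the number of indecomposable summands is exactly the Dehy-Keller result cited in the introduction, combined with the observation that $\pi$ of any tilting object already realizes the value $n$.
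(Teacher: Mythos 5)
Your proposal is correct, but it takes a different route from the paper: the paper's proof of this proposition is essentially a citation, deferring the bijection to \cite[Proposition 3.4]{BMRRT} and then invoking Proposition~\ref{p:iso-ct-graph} for the graph isomorphisms, whereas you reprove the bijection from scratch. Your argument is sound: well-definedness and injectivity follow from Lemma~\ref{l:tilting-to-cluster}(2) and the bijection on indecomposables, and your surjectivity step correctly combines Lemma~\ref{l:tilting-to-cluster}(1) with the Dehy--Keller invariance of $|U|$ for basic cluster-tilting objects and with Lemma~\ref{l:tilting-object} to force the pulled-back rigid object to be tilting. The observation that $\mathbb{S}_2$ acts as the identity on $\mc(\mh)$, so that $\mathbb{S}_2$-orbits of indecomposables are singletons and the two edge relations coincide, is exactly what is needed (and is implicit in the paper's appeal to Proposition~\ref{p:iso-ct-graph}). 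What your approach buys is self-containedness and a more modern argument: the original proof in \cite{BMRRT} predates \cite{DK} and proceeds via complements of almost complete (cluster-)tilting objects, while your use of the constancy of the number of summands of cluster-tilting objects short-circuits that machinery. The cost is that you rely on the unproved (but stated in the paper) assertion that $\pi$ restricts to a bijection on indecomposables when $\mh$ has no nonzero projectives; this is standard for orbit categories but is itself only asserted, not proved, in the text.
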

\begin{proof}
The bijection is proved by ~\cite[Proposition 3.4]{BMRRT}. The isomorphisms of graphs are consequences of the bijection and Proposition~\ref{p:iso-ct-graph}.
\end{proof}

\section{Reduction of triangulated categories}~\label{s:IY-reduction}
\subsection{Iyama-Yoshino's reduction}
Let $\mt$ be a $\Hom$-finite triangulated category with a Serre functor $_\mt\mathbb{S}$. Denote by $\Sigma$ the suspension functor of $\mt$. Let $\mathcal{U}$ be a full subcategory of $\mt$ satisfying the following properties:
\begin{itemize}
\item $\mathcal{U}$ is rigid, that is $\Hom_\mt(\mathcal{U}, \Sigma\mathcal{U})=0$;
\item $\mathcal{U}$ is functorially finite;
\item $\mathcal{U}$ is stable under $_\mt\mathbb{S}_2$.
\end{itemize}
We define the full subcategory $\mathcal{Z}(\mathcal{U})$ as
\[\mathcal{Z}(\mathcal{U}):=\{X\in \mt~|~\Hom_\mt(\mathcal{U}, \Sigma X)=0\}\subseteq \mt.
\]
By definition, we have $\mathcal{U}\subseteq \mathcal{Z}(\mathcal{U})$ and hence we can form the additive quotient $\mt_\mathcal{U}:=\mathcal{Z}(\mathcal{U})/[\mathcal{U}]$. It has the same objects as $\mathcal{Z}(\mathcal{U})$ and for $X, Y\in \mathcal{Z}(\mathcal{U})$ we have
\[\Hom_{\mt_{\mathcal{U}}}(X,Y):=\Hom_\mt(X,Y)/ [\mathcal{U}](X,Y),
\]
where $[\mathcal{U}](X,Y)$ denotes the subgroup of $\Hom_\mt(X,Y)$ consisting of morphisms which factor through object in $\mathcal{U}$. 
For $X\in \mathcal{Z}(\mathcal{U})$, let $X\to U_X$ be a minimal left $\mathcal{U}$-approximation. We define $X\langle 1\rangle$ to be the cone
\[X\to U_X\to X\langle 1\rangle \to \Sigma X.
\]
\begin{theorem}{\cite[Theorem 4.7/ 4.9]{IY}}~\label{t:IY-reduction}
The category $\mt_{\mathcal{U}}$ is triangulated, with suspension functor $\langle 1\rangle$ and Serre functor $_{\mt_{\mathcal{U}}}\mathbb{S}=\ _\mt\mathbb{S}_2\circ\langle 2\rangle$. Moreover, there is a bijection between the set of cluster-tilting subcategories of $\mt$ containing $\mathcal{U}$ and the set of cluster-tilting subcategories of $\mt_{\mathcal{U}}$.
\end{theorem}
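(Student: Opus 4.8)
This is a substantial result, and the plan is to establish its three parts in order---the triangulated structure on $\mt_{\mathcal{U}}$, the formula for its Serre functor, and finally the bijection of cluster-tilting subcategories---each step feeding the next. I would begin with two preliminary vanishings valid for all $X\in\mathcal{Z}(\mathcal{U})$ and $U\in\mathcal{U}$: first $\Hom_\mt(\mathcal{U},\Sigma X)=0$ by definition of $\mathcal{Z}(\mathcal{U})$, and second $\Hom_\mt(X,\Sigma U)=0$, which follows from Serre duality together with the ${}_{\mt}\mathbb{S}_2$-stability of $\mathcal{U}$ via $\Hom_\mt(X,\Sigma U)\cong\mathbb{D}\Hom_\mt(U,\Sigma\,{}_{\mt}\mathbb{S}_2 X)$ and the fact that $\mathcal{Z}(\mathcal{U})$ is itself ${}_{\mt}\mathbb{S}_2$-stable. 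I would also record the dual construction: a minimal right $\mathcal{U}$-approximation $U^{X}\to X$ with cocone $X\langle -1\rangle$, the candidate inverse of $\langle 1\rangle$.

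For the triangulated structure, $\langle 1\rangle$ is defined on objects by the approximation triangle $X\to U_X\to X\langle 1\rangle\to\Sigma X$, and I would first check it descends to a well-defined autoequivalence of $\mt_{\mathcal{U}}$ (independence of the chosen approximation modulo $[\mathcal{U}]$, and $X\langle 1\rangle\langle -1\rangle\cong X\cong X\langle -1\rangle\langle 1\rangle$ in the quotient). The distinguished triangles are declared to be those isomorphic to a standard triangle: given $f\colon X\to Y$ in $\mathcal{Z}(\mathcal{U})$ with approximation $\iota_X\colon X\to U_X$, I form the cone of $\binom{f}{\iota_X}\colon X\to Y\oplus U_X$, obtaining $X\to Y\oplus U_X\to Z\to\Sigma X$, and then collapse the summand $U_X\in\mathcal{U}$ to read off $X\xrightarrow{f}Y\to Z\to X\langle 1\rangle$ in $\mt_{\mathcal{U}}$. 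Already the claim $Z\in\mathcal{Z}(\mathcal{U})$ is not formal: it reduces to the injectivity of $\Hom_\mt(U,\Sigma^2\iota_X)$, which I would obtain by dualizing the surjection $\Hom_\mt(\iota_X,{}_{\mt}\mathbb{S}_2 U)$ coming from the approximation property, again using ${}_{\mt}\mathbb{S}_2$-stability. Verifying (TR1)--(TR4) then occupies the bulk of the work; (TR1)--(TR3) follow by lifting to $\mt$ and discarding $[\mathcal{U}]$-morphisms, and I expect the octahedral axiom (TR4) to be the main obstacle, since the relevant lifts are only well-defined modulo $[\mathcal{U}]$ and one must patch three octahedra in $\mt$ so that the resulting data descends coherently.

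The engine for the remaining two parts is the natural isomorphism
\[
\Hom_{\mt_{\mathcal{U}}}(X,Y\langle 1\rangle)\;\cong\;\Hom_\mt(X,\Sigma Y)\qquad(X,Y\in\mathcal{Z}(\mathcal{U})),
\]
which I would prove by applying $\Hom_\mt(X,-)$ to the approximation triangle of $Y$: the term $\Hom_\mt(X,\Sigma U_Y)$ vanishes by the second preliminary, the map onto $\Hom_\mt(X,\Sigma Y)$ is surjective, and its kernel is exactly $[\mathcal{U}](X,Y\langle 1\rangle)$ because any morphism factoring through $\mathcal{U}$ composes to zero into $\Sigma Y$ by the first preliminary. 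For the Serre functor I would then combine this isomorphism with Serre duality $\Hom_\mt(X,Y)\cong\mathbb{D}\Hom_\mt(Y,{}_{\mt}\mathbb{S}X)$ in $\mt$, the identity ${}_{\mt}\mathbb{S}={}_{\mt}\mathbb{S}_2\circ\Sigma^2$, and the fact that ${}_{\mt}\mathbb{S}_2$ preserves $\mathcal{U}$ and hence commutes with $\langle 1\rangle$ on $\mt_{\mathcal{U}}$; transporting the duality through the quotient and tracking the approximation triangles carefully identifies the Serre functor as ${}_{\mt_{\mathcal{U}}}\mathbb{S}={}_{\mt}\mathbb{S}_2\circ\langle 2\rangle$. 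The delicate point here is reconciling $\langle 2\rangle$ with $\Sigma^2$ modulo $\mathcal{U}$, where correction terms from the approximation triangles must be shown to disappear after passing to $\mt_{\mathcal{U}}$.

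For the bijection, the displayed isomorphism makes the cluster-tilting condition transparent. I would assign to a cluster-tilting subcategory $\mathcal{M}\supseteq\mathcal{U}$ of $\mt$ the subcategory $\mathcal{M}/[\mathcal{U}]$ of $\mt_{\mathcal{U}}$, and conversely assign to a cluster-tilting subcategory $\mathcal{N}$ of $\mt_{\mathcal{U}}$ the full subcategory of those $X\in\mathcal{Z}(\mathcal{U})$ whose image lies in $\mathcal{N}$ (which automatically contains $\mathcal{U}$, as the objects of $\mathcal{U}$ become zero in $\mt_{\mathcal{U}}$). Then $\Hom_{\mt_{\mathcal{U}}}(\mathcal{M}/[\mathcal{U}],\langle 1\rangle\,\mathcal{M}/[\mathcal{U}])=0$ is equivalent to $\Hom_\mt(\mathcal{M},\Sigma\mathcal{M})=0$; the maximality clause transfers because $X\in\mathcal{Z}(\mathcal{U})$ lies in $\mathcal{M}$ if and only if its image lies in $\mathcal{M}/[\mathcal{U}]$; and functorial finiteness is inherited since composites of $\mathcal{U}$-approximations with $\mathcal{M}$-approximations yield $\mathcal{M}/[\mathcal{U}]$-approximations. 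That the two assignments are mutually inverse is then a formal verification, which completes the proof.
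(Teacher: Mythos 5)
The paper offers no proof of this statement: it is quoted verbatim from Iyama--Yoshino \cite[Theorems 4.2, 4.7, 4.9]{IY}, so the only thing to compare your sketch against is the original source --- and your outline is a faithful reconstruction of it. The steps you actually carry out are correct: the two vanishings (including the observation that $\mathcal{Z}(\mathcal{U})$ is $_\mt\mathbb{S}_2$-stable, which is what makes $\Hom_\mt(X,\Sigma U)=0$ work), the verification that the cone $Z$ lies in $\mathcal{Z}(\mathcal{U})$ via the injectivity of $\Hom_\mt(U,\Sigma^2\iota_X)$ dualized from the approximation property, and the isomorphism $\Hom_{\mt_{\mathcal{U}}}(X,Y\langle 1\rangle)\cong\Hom_\mt(X,\Sigma Y)$ with its kernel identified as $[\mathcal{U}](X,Y\langle 1\rangle)$ --- this last isomorphism is indeed the engine for both the Serre functor formula and the bijection, exactly as in \cite{IY}. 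What remains only flagged rather than executed is also where \cite{IY} spends most of its effort: the octahedral axiom for $\mt_{\mathcal{U}}$, the precise cokernel bookkeeping identifying $_{\mt_{\mathcal{U}}}\mathbb{S}$ with $_\mt\mathbb{S}_2\circ\langle 2\rangle$ (where the correction terms $\Hom_\mt(Y,\Sigma^2 U)\cong\mathbb{D}\Hom_\mt(\mathbb{S}_2^{-1}U,Y)$ must be matched against the ideal $[\mathcal{U}]$), and the covariant/contravariant finiteness of $\mathcal{Z}(\mathcal{U})$ itself, which you use implicitly both to guarantee approximations exist and to transfer functorial finiteness in the bijection. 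None of these is a wrong turn --- they are genuine omissions of known-to-be-true lemmas rather than gaps in the strategy.
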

We call the triangulated category $\mt_\mathcal{U}$ the {\it Iyama-Yoshino's reduction of $\mt$ with respect to $\mathcal{U}$}.
\begin{remark}~\label{r:orbit}
 Let $X\in\mathcal{Z}(\mathcal{U})$ be an indecomposable object of $\mt$ such that  $X\not\in\mathcal{U}$. As a direct consequence of Theorem~\ref{t:IY-reduction}, the $_{\mt_{\mathcal{U}}}\mathbb{S}_2$-orbit of $X$ in $\mt_{\mathcal{U}}$ coincides with the $_\mt\mathbb{S}_2$-orbit of $X$ in $\mt$.
\end{remark}
\subsection{Reduction of the cluster category of a hereditary algebra}
Let $H$ be a finite dimensional hereditary algebra over $k$.  Denote by $\mod H$ the category of finitely generated right $H$-modules and $\mc:=\mc(\mod H)$ the corresponding cluster category with suspension functor $\Sigma$. Note that $_\mc\mathbb{S}:=\Sigma^2$ is a Serre functor of $\mc$.

Let $E$ be an indecomposable rigid object of $\mc$. The subcategory $\add E$ is rigid, functorially finite and stable under $_\mc\mathbb{S}_2$. In particular, we may apply Theorem~\ref{t:IY-reduction} to the subcategory $\add E$ and we denote by $\mc_E$ the Iyama-Yoshino's reduction of $\mc$ with respect to $\add E$ in this case. 
\begin{theorem}~\cite[Theorem 3.3]{FG}~\label{t:reduction-cluster}
The Iyama-Yoshino's reduction~$\mc_E$ of $\mc$ with respect to $\add E$ is the cluster category $\mc(\mod H')$ of a finite dimensional hereditary algebra $H'$. Moreover, $\rank \go(\mod H')=\rank \go(\mod H)-1$.
\end{theorem}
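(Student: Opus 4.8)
The plan is to identify $\mc_E$ with the cluster category of the right perpendicular category of a suitable exceptional \emph{module}, reading off the rank from the classical perpendicular-category theorem. First I would normalise $E$. A lift of $E$ along $\pi\colon\der^b(\mod H)\to\mc$ is an indecomposable object of $\der^b(\mod H)$ with no self-extensions in any nonzero degree, hence, the derived category being hereditary, a shift of an exceptional module. Completing it to a full exceptional sequence yields a tilting complex, so after replacing $H$ by a derived-equivalent hereditary algebra $H_1$ and transporting everything along the induced equivalence $\mc(\mod H)\cong\mc(\mod H_1)$ of Lemma~\ref{l:universal-property}, I may assume (relabelling $H_1$ as $H$) that $E=\pi(\bar E)$ for an exceptional module $\bar E\in\mod H$. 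Since $\mc$ is $2$-Calabi-Yau, $\mathbb{S}_2=\id$ on $\mc$, so $\add E$ is automatically $\mathbb{S}_2$-stable and $\mathcal{Z}(\add E)=\{X\in\mc\mid \Ext^1_\mc(E,X)=0\}$; moreover Theorem~\ref{t:IY-reduction} shows that the Serre functor of $\mc_E$ is $\langle 2\rangle$, so $\mc_E$ is itself an algebraic $2$-Calabi-Yau category, with cluster-tilting objects because $E$ extends to one in $\mc$ (Lemma~\ref{l:tilting-to-cluster}).

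The candidate for $H'$ comes from the perpendicular category $\bar E^{\perp}\subseteq\mod H$. By Proposition~\ref{p:perpendicular-category}$(2)$ it is hereditary abelian, and by the perpendicular-category theorem for exceptional modules over a hereditary algebra~\cite{GL91} it is equivalent to $\mod H'$ for a finite dimensional hereditary algebra $H'$ with exactly one fewer simple, that is, $\rank\go(\mod H')=\rank\go(\mod H)-1$. This already yields the rank assertion. The same theorem furnishes a tilting object $T'$ of $\bar E^{\perp}$ with $\bar E\oplus T'$ a tilting module of $\mod H$; I take $T'$ to be the projective generator of $\bar E^{\perp}\cong\mod H'$, so that $\End_{\mod H}(T')\cong H'$.

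For the identification $\mc_E\simeq\mc(\mod H')$ I would invoke the recognition theorem of Keller and Reiten: an algebraic $2$-Calabi-Yau category possessing a cluster-tilting object whose endomorphism algebra $A$ is hereditary is triangle equivalent to $\mc(\mod A)$. By Lemma~\ref{l:tilting-to-cluster}$(2)$, $E\oplus\pi(T')$ is cluster-tilting in $\mc$, so its image $\bar T'$ is a cluster-tilting object of $\mc_E$ by Theorem~\ref{t:IY-reduction}; and by Remark~\ref{r:orbit} the indecomposable summands and $\mathbb{S}_2$-orbits of $\mc_E$ are precisely those of $\mc$ lying in $\mathcal{Z}(\add E)$. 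It then remains to compute $\End_{\mc_E}(\bar T')=\End_\mc(\pi T')/[\add E]$ and to show it is isomorphic to $\End_{\mod H}(T')\cong H'$, which is hereditary; the recognition theorem finishes the proof and matches the ranks as required.

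The crux, and the step I expect to be the main obstacle, is exactly this computation of $\End_{\mc_E}(\bar T')$. The space $\End_\mc(\pi T')=\bigoplus_{i}\Hom_{\der^b(\mod H)}(T',\mathbb{S}_2^{\,i}T')$ carries, besides the degree-zero part $\End_{\mod H}(T')$, extra summands governed by Auslander-Reiten duality, for instance $\Hom(T',\mathbb{S}_2 T')\cong\Ext^1_{\mod H}(T',\tau T')$. These do not vanish in $\mc$, and the reason they disappear after reduction is that the Auslander-Reiten translate of $\bar E^{\perp}$ differs from $\tau$ of $\mod H$ only by contributions in $\add\bar E$: concretely, the relevant morphisms factor through $\add E$, which is precisely what the suspension $\langle 1\rangle$ of $\mc_E$, built from cones of minimal left $\add E$-approximations, encodes. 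Making this factorization precise, equivalently proving that passing to the Iyama-Yoshino reduction replaces $\tau_{\mod H}$ by $\tau_{\bar E^{\perp}}$, is where the real work lies; once it is established, $\End_{\mc_E}(\bar T')\cong H'$ follows and the theorem is proved.
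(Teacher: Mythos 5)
The paper does not actually prove this statement: it is imported verbatim from \cite[Theorem 3.3]{FG}, so there is no in-paper argument to compare yours against. Judged on its own terms, your architecture (normalise $E$ to an exceptional module $\bar E$, pass to $\bar E^{\perp}\cong\mod H'$, and identify $\mc_E$ via the Keller--Reiten recognition theorem for algebraic $2$-Calabi--Yau categories with a cluster-tilting object having hereditary endomorphism algebra) is a legitimate and essentially standard route, and the rank statement does follow immediately from the perpendicular-category theorem.

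However, as written the proposal has a genuine gap, and you name it yourself: the isomorphism $\End_{\mc_E}(\bar T')\cong H'$ is announced as ``where the real work lies'' but never established. This is not a routine verification one may defer. One has $\End_{\mc}(\pi T')\cong\End_H(T')\oplus\Ext^1_H(T',\tau^{-1}T')$, and the second summand is in general nonzero because $T'$ is projective only in $E^{\perp}$, not in $\mod H$; the whole content of the theorem is that this summand is exactly the ideal $[\add E]$, i.e.\ that the Auslander--Reiten translation of $E^{\perp}$ differs from that of $\mod H$ only by contributions factoring through $\add \bar E$. Without that computation the recognition theorem cannot be invoked, so what you have is a proof outline rather than a proof. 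Two further points need repair: (i) the normalisation step is justified incorrectly --- the direct sum of the terms of a full exceptional sequence in $\der^b(\mod H)$ is \emph{not} in general a tilting complex (higher extensions in the forward direction survive), so one should instead use the standard argument of \cite{BMRRT} that any rigid indecomposable of $\mc$ is induced by an exceptional module over a derived-equivalent hereditary algebra; (ii) Keller--Reiten requires $\mc_E$ to be \emph{algebraic}, and the algebraicity of an Iyama--Yoshino reduction needs a citation, not just the $2$-Calabi--Yau property from Theorem~\ref{t:IY-reduction}.

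A route that closes the gap using only tools already in this paper: Theorem~\ref{t:reduction-derived-category} (proved via the Amiot--Oppermann Theorem~\ref{t:AO} together with \cite[Corollary 6.5]{H88}) already gives $\der^b(\mod H)_E\simeq\der^b(\mod H')$ with $\rank\go(\mod H')=\rank\go(\mod H)-1$; one then descends to the orbit category using the compatibility of Iyama--Yoshino reduction with the projection $\pi$ and Zhu's bijection (Proposition~\ref{p:iso-ct-graph}). This avoids computing $\End_{\mc_E}(\bar T')$ altogether and is presumably closer in spirit to the argument of \cite{FG} that the authors are citing.
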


\subsection{Reduction of the derived category of a hereditary category}
 Let $\Lambda$ be a finite dimensional $k$-algebra of global dimension $\leq 2$. Let $\der^b(\Lambda)$ be the bounded derived category of finitely generated right $\Lambda$-modules. Denote by $_\Lambda\mathbb{S}$ the Serre functor of $\der^b(\Lambda).$
 The algebra $\Lambda$ is  {\it $\tau_2$-finite} if \[\dim_k\bigoplus_{p\geq 0} \Hom_{\der^b(\Lambda)}(\Lambda, ~_\Lambda\mathbb{S}_2^p\Lambda)<\infty.\]
 
 \begin{lemma}~\label{l:tau-finite}
 Each quasitilted algebra $\Lambda$ is $\tau_2$-finite.
 \end{lemma}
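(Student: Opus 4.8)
The plan is to reduce the statement to a finiteness computation inside $\der^b(\mh)$, where $\mh$ is a connected hereditary abelian category realizing $\Lambda$ as $\Lambda=\End_\mh(T)$ for a basic tilting object $T$. Recall that $\Lambda$ is $\tau_2$-finite exactly when its $3$-preprojective algebra $\bigoplus_{p\ge 0}\Hom_{\der^b(\Lambda)}(\Lambda,{}_\Lambda\mathbb{S}_2^{-p}\Lambda)$ is finite dimensional (the sum over the positive powers $\mathbb{S}_2^{p}$ collapses to $\Lambda$ for any algebra of global dimension $\le 2$, so the substantive condition is the one with negative powers), and this is what I will establish. The triangle equivalence $\der^b(\Lambda)\simeq\der^b(\mh)$ recalled above (see~\cite{HRS}) sends $\Lambda$ to $T$, since $\RHom_\mh(T,T)=\End_\mh(T)=\Lambda$ is concentrated in degree $0$ ($T$ being rigid in a hereditary category); and as a Serre functor is unique, $_\Lambda\mathbb{S}$ corresponds to $\mathbb{S}=\tau\circ\Sigma$ on $\der^b(\mh)$. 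Hence it suffices to show that $\bigoplus_{p\ge 0}\Hom_{\der^b(\mh)}(T,\mathbb{S}_2^{-p}T)$ is finite dimensional; I note that this space is the subsum over $i\le 0$ of $\Hom_{\mc(\mh)}(\pi T,\pi T)=\bigoplus_{i\in\Z}\Hom_{\der^b(\mh)}(T,\mathbb{S}_2^{i}T)$, so finite dimensionality of the latter is more than enough.

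Since $\Lambda$ decomposes along the connected components of $\mh$, I may assume $\mh$ connected and split according to Theorem~\ref{t:classification}, using Proposition~\ref{p:structure} to decide whether $\mh$ has nonzero projectives. Suppose first that $\mh$ has no nonzero projective objects; this is the case of interest, including $\mh\simeq\coh\X$. By Proposition~\ref{p:structure}(4) the Auslander--Reiten translation $\tau$ restricts to an autoequivalence of the heart $\mh$ and coincides there with the derived Auslander--Reiten translation, so $\tau^{i}T\in\mh$ for every $i\in\Z$. Writing $\mathbb{S}_2=\mathbb{S}\circ\Sigma^{-2}=\tau\circ\Sigma^{-1}$, we get $\mathbb{S}_2^{i}T=(\tau^{i}T)[-i]$, whence
\[\Hom_{\der^b(\mh)}(T,\mathbb{S}_2^{i}T)=\Hom_{\der^b(\mh)}\big(T,(\tau^{i}T)[-i]\big)=\Ext^{-i}_\mh(T,\tau^{i}T).\]
Because $\mh$ is hereditary, $\Ext^{j}_\mh(T,-)$ vanishes unless $j\in\{0,1\}$, so this is nonzero only for $i\in\{0,-1\}$. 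Thus $\bigoplus_{p\ge 0}\Hom_{\der^b(\mh)}(T,\mathbb{S}_2^{-p}T)=\End_\mh(T)\oplus\Ext^1_\mh(T,\tau^{-1}T)$ is finite dimensional, settling this case; in fact the same computation shows the whole space $\End_{\mc(\mh)}(\pi T)$ is finite dimensional.

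It remains to treat the case where $\mh$ contains nonzero projectives. By Proposition~\ref{p:structure}(3), $\mh\simeq\mod H$ for a finite dimensional hereditary algebra $H$, and the computation above breaks down because $\tau$ no longer preserves the heart $\mod H\subseteq\der^b(H)$: iterating $\tau^{-1}$ carries preinjective modules through the injectives into strictly positive shifts, so $\tau^{-p}T$ cannot be kept in a single degree. This is the main obstacle. I resolve it by invoking the fact that the cluster category $\mc(\mod H)=\der^b(H)/\langle\mathbb{S}_2\rangle$ is $\Hom$-finite for every hereditary $H$~\cite{BMRRT}; that is, $\Hom_{\mc(\mod H)}(\pi T,\pi T)=\bigoplus_{i\in\Z}\Hom_{\der^b(H)}(T,\mathbb{S}_2^{i}T)$ is finite dimensional. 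As the $3$-preprojective sum is its direct summand over $i\le 0$, it too is finite dimensional, completing the proof that every quasitilted algebra is $\tau_2$-finite. The only nonformal input is this $\mod H$ case, where finiteness encodes the standard fact that, although $\tau$-orbits in the regular part of a wild $\der^b(H)$ are infinite, a fixed object $T$ admits nonzero maps to only finitely many members of an $\mathbb{S}_2$-orbit.
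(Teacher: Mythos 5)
Your proof is correct and follows essentially the same route as the paper: both transfer the question along the tilting equivalence $\der^b(\Lambda)\simeq \der^b(\mh)$, sending $\Lambda$ to $T$, and then bound the $\mathbb{S}_2$-orbit $\Hom$-spaces $\Hom_{\der^b(\mh)}(T,\mathbb{S}_2^{i}T)$ using that $\mh$ is hereditary. The paper compresses the final step into the single assertion that finiteness of $\bigoplus_{p\in\Z}\Hom_{\der^b(\mh)}(T,(\tau^{-1}\Sigma)^pT)$ ``is a consequence of the fact that $\mh$ is hereditary''; your case split (the direct $\Ext$-computation when $\mh$ has no nonzero projectives, and $\Hom$-finiteness of $\mc(\mod H)$ from~\cite{BMRRT} otherwise) supplies exactly the details behind that assertion, and your reading of the exponent in the definition of $\tau_2$-finiteness as $\mathbb{S}_2^{-p}$ matches what the paper's own proof actually bounds.
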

 \begin{proof}
 Since $\Lambda$ is quasitilted, the global dimension of $\Lambda$ is at most $2$.
 Moreover, there is a hereditary abelian category $\mh$ with a tilting object $T$ such that $\Lambda\cong \End_\mh(T)$. We obtain a triangle equivalence $\der^b(\mh)\cong \der^b(\Lambda)$. Let $\tau$ be the Auslander-Reiten translation of $\der^b(\mh)$. It suffices to show that 
 \[\dim_k\bigoplus_{p\in \Z}\Hom_{\der^b(\mh)}(T, (\tau^{-1}\Sigma)^pT)<\infty,
 \] 
 which is a consequence of the fact that $\mh$ is hereditary.

\end{proof}
 The following and its dual version have been proved in~\cite{AO}.
 \begin{theorem}\cite[Theorem 4.6]{AO}~\label{t:AO}
 Let $\Lambda$ be a $\tau_2$-finite algebra and $P$ an indecomposable projective $\Lambda$-module such that $\Hom_\Lambda(P,Q)=0$ for any indecomposable projective module $Q$ with $Q\not\cong P$. Let $\Lambda'=\Lambda/\Lambda e\Lambda$, where $e$ is the primitive idempotent corresponding to $P$. Denote by $\mathcal{U}=\{_\Lambda\mathbb{S}_2^pP~|~p\in \Z\}$. Then the Iyama-Yoshino's reduction $\der^b(\Lambda)_{\mathcal{U}}$ of $\der^b(\Lambda)$ with respect to $\mathcal{U}$  is triangle equivalent to $\der^b(\Lambda')$.
 \end{theorem}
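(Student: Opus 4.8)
The plan is to produce inside the Iyama--Yoshino reduction $\der^b(\Lambda)_{\mathcal{U}}$ an explicit silting object with endomorphism algebra $\Lambda'$, and then to invoke a silting recognition theorem. Write $P=e\Lambda$ and $P'=(1-e)\Lambda$, so that $\Lambda=P\oplus P'$ as a right module. The hypothesis $\Hom_\Lambda(P,Q)=0$ for every indecomposable projective $Q\not\cong P$ is equivalent to $(1-e)\Lambda e=0$; hence $\Lambda$ is triangular with respect to $e$, and $\End_\Lambda(P')=(1-e)\Lambda(1-e)$ is canonically identified with $\Lambda'=\Lambda/\Lambda e\Lambda$. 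Before reducing I would check that $\mathcal{U}=\{\,_\Lambda\mathbb{S}_2^pP\mid p\in\Z\}$ meets the hypotheses of Theorem~\ref{t:IY-reduction}: it is $\mathbb{S}_2$-stable by construction, while its rigidity $\Hom_{\der^b(\Lambda)}(\mathcal{U},\Sigma\mathcal{U})=0$ and its functorial finiteness follow from projectivity of $P$ and the $\tau_2$-finiteness of $\Lambda$ via Serre duality --- indeed $\tau_2$-finiteness is exactly what forces only finitely many members of the orbit to contribute to any $\Hom$-space, hence to any approximation.

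Next I would analyse $\ol{P'}$, the image of $P'$ in $\der^b(\Lambda)_{\mathcal{U}}$. First, $P'\in\mathcal{Z}(\mathcal{U})$, i.e. $\Hom_{\der^b(\Lambda)}(\mathbb{S}_2^pP,\Sigma P')=0$ for all $p$: for $p=0$ this is $\Ext^1_\Lambda(P,P')=0$ by projectivity of $P$, and for $p\neq0$ it follows from Serre duality together with the degree bookkeeping imposed by $\tau_2$-finiteness. The endomorphism algebra in the reduction is
\[
\Hom_{\der^b(\Lambda)_{\mathcal{U}}}(\ol{P'},\ol{P'})=\End_\Lambda(P')\big/[\mathcal{U}](P',P'),
\]
and the crucial point is that the ideal $[\mathcal{U}](P',P')$ vanishes: any factorization $P'\to\mathbb{S}_2^pP\to P'$ is zero, because for $p=0$ the second factor lies in $\Hom_\Lambda(P,P')=(1-e)\Lambda e=0$ (this is where the source condition enters), while for $p\neq0$ at least one of $\Hom(P',\mathbb{S}_2^pP)$ and $\Hom(\mathbb{S}_2^pP,P')$ vanishes for degree reasons. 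Thus $\End_{\der^b(\Lambda)_{\mathcal{U}}}(\ol{P'})\cong\Lambda'$. A parallel computation of $\Hom_{\der^b(\Lambda)_{\mathcal{U}}}(\ol{P'},\ol{P'}\langle n\rangle)$ for $n\neq0$, run through the approximation triangles defining $\langle 1\rangle$, shows that $\ol{P'}$ is presilting; and $\ol{P'}$ generates $\der^b(\Lambda)_{\mathcal{U}}$, since $P\oplus P'=\Lambda$ generates $\der^b(\Lambda)$ while $P\in\mathcal{U}$ is killed in the reduction.

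Granting these facts, the identification $\der^b(\Lambda)_{\mathcal{U}}\simeq\der^b(\Lambda')$ would follow from a silting recognition theorem: a generating silting object $S$ in an algebraic, idempotent-complete triangulated category realizes that category as $\per(\End S)$, which coincides with $\der^b(\Lambda')$ once $\Lambda'$ is seen to have finite global dimension. I expect two genuine obstacles. The first is to verify that $\der^b(\Lambda)_{\mathcal{U}}$ is \emph{algebraic}, i.e. admits a dg (or Frobenius) enhancement, so that the recognition theorem applies: the triangulated structure on an Iyama--Yoshino reduction is built by hand in Theorem~\ref{t:IY-reduction}, and lifting it to an enhancement is not formal. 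The second, more computational, obstacle is the package of vanishing statements underlying $[\mathcal{U}](P',P')=0$ and the presilting property of $\ol{P'}$: all of these reduce to controlling $\Hom_{\der^b(\Lambda)}(P',\mathbb{S}_2^pP)$ and $\Hom_{\der^b(\Lambda)}(\mathbb{S}_2^pP,P')$ across the entire $\mathbb{S}_2$-orbit, and it is precisely here that $\tau_2$-finiteness and Serre duality must be deployed carefully to guarantee that, for each $p$, the relevant group lands in the correct cohomological degree.
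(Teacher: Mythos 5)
The first thing to say is that the paper offers no proof of this statement to compare yours against: it is imported verbatim from Amiot--Oppermann (``The following and its dual version have been proved in \cite{AO}''), and the authors use it as a black box in the proof of Theorem~\ref{t:reduction-derived-category}. So your reconstruction has to stand on its own. Your overall strategy --- exhibit the image $\ol{P'}$ of $P'=(1-e)\Lambda$ as a tilting generator of $\der^b(\Lambda)_{\mathcal{U}}$ with endomorphism ring $\Lambda'$ and invoke Keller's recognition theorem --- is a legitimate route, and the elementary algebra is right: the source condition gives $(1-e)\Lambda e=0$, hence $\Lambda e\Lambda=e\Lambda$ and $\Lambda'\cong(1-e)\Lambda(1-e)=\End_\Lambda(P')$, the global dimension of $\Lambda'$ is at most that of $\Lambda$ (triangular matrix algebra), and the $p=0$ part of $[\mathcal{U}](P',P')$ dies because $\Hom_\Lambda(P,P')=(1-e)\Lambda e=0$. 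The orbit vanishings you invoke ``for degree reasons'' do in fact follow from projectivity, global dimension $\leq 2$ and Serre duality, so that part of the plan is sound even though you do not carry it out.

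The genuine gaps are the ones you partly flag yourself, plus one misstatement. (i) \emph{Presilting is not enough.} To recognize the reduction as $\der^b(\Lambda')$ rather than as $\per$ of a non-positive dg algebra with zeroth cohomology $\Lambda'$, you must prove $\Hom_{\der^b(\Lambda)_{\mathcal{U}}}(\ol{P'},\ol{P'}\langle n\rangle)=0$ for \emph{all} $n\neq0$; since $\langle n\rangle$ is built from iterated cones over $\mathcal{U}$-approximations (and $\langle -1\rangle$ from cocones over right approximations), this unwinds into a genuine computation with long exact sequences across the whole $\mathbb{S}_2$-orbit, which is the actual content of the theorem and is only promised in your write-up. (ii) \emph{Generation is asserted, not proved.} The reduction is the subquotient $\mathcal{Z}(\mathcal{U})/[\mathcal{U}]$, not a Verdier quotient, its objects are constrained by $\Hom(\mathcal{U},\Sigma X)=0$, and its suspension differs from $\Sigma$; so ``$\Lambda$ generates $\der^b(\Lambda)$ and $P$ is killed'' is not an argument. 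You need to show separately that any $X\in\mathcal{Z}(\mathcal{U})$ with $\Hom(\ol{P'},X\langle n\rangle)=0$ for all $n$ lies in $\add\mathcal{U}$. (iii) \emph{Algebraicity} of $\der^b(\Lambda)_{\mathcal{U}}$ must be supplied (realize $\der^b(\Lambda)$ as the stable category of a Frobenius category and enlarge the projective-injectives by a lift of $\mathcal{U}$); you correctly identify this as an obstacle but do not resolve it. In short: the outline is viable, but every step carrying real weight is deferred, so this is a plan for a proof rather than a proof.
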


Let $\mh$ be a connected hereditary abelian category with tilting objects. Denote by $\der^b(\mh)$ the bounded derived category with suspension functor $\Sigma$. Let $\mathbb{S}$ be a Serre functor of $\der^b(\mh)$. Let $E\in \mh$ be an exceptional object and $\mathcal{E}:=\{\mathbb{S}_2^pE~|~p\in \Z\}$ the subcategory of $\der^b(\mh)$ formed by the $\mathbb{S}_2$-orbit of $E$. 
\begin{lemma}
The subcategory $\mathcal{E}$ is rigid, functorially finite and stable under $\mathbb{S}_2$.
\end{lemma}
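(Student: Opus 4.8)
The statement packages three assertions about the orbit subcategory $\mathcal{E}=\{\mathbb{S}_2^pE\mid p\in\Z\}$: that $\add\mathcal{E}$ is rigid, functorially finite, and stable under $\mathbb{S}_2$. The plan is to dispatch stability and rigidity at once by transporting everything to the cluster category $\mc(\mh)$, where $E$ becomes the rigid object $\pi(E)$, and then to treat functorial finiteness — the only substantive point — by using $\tau_2$-finiteness to show that each object of $\der^b(\mh)$ interacts with only finitely many members of the infinite orbit.

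Stability is immediate, since $\mathbb{S}_2$ merely shifts the orbit index: $\mathbb{S}_2(\mathbb{S}_2^pE)=\mathbb{S}_2^{p+1}E$, so $\mathbb{S}_2\,\add\mathcal{E}=\add\mathcal{E}$. For rigidity, I would recall that morphisms in the cluster category are given by
\[
\Hom_{\mc(\mh)}(\pi A,\pi B)=\bigoplus_{n\in\Z}\Hom_{\der^b(\mh)}(A,\mathbb{S}_2^nB).
\]
Since $\mathbb{S}_2$ is an autoequivalence commuting with $\Sigma$, this gives
\[
\Hom_{\mc(\mh)}(\pi E,\Sigma\pi E)=\bigoplus_{n\in\Z}\Hom_{\der^b(\mh)}(E,\Sigma\mathbb{S}_2^nE),
\]
and the right-hand side vanishes exactly when $\Hom_{\der^b(\mh)}(\mathbb{S}_2^pE,\Sigma\mathbb{S}_2^qE)=\Hom_{\der^b(\mh)}(E,\Sigma\mathbb{S}_2^{q-p}E)=0$ for all $p,q$, that is, precisely when $\add\mathcal{E}$ is rigid. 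Because $E$ is rigid in $\mh$, Lemma~\ref{l:tilting-to-cluster}(1) tells us $\pi(E)$ is rigid in $\mc(\mh)$, so the left-hand side is zero and rigidity of $\add\mathcal{E}$ follows.

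The main obstacle is functorial finiteness, since $\mathcal{E}$ is an infinite family and one cannot simply form a minimal approximation out of all of it. The key observation is that for a fixed $X\in\der^b(\mh)$ only finitely many members of the orbit admit a nonzero morphism to or from $X$. Using again that $\mathbb{S}_2$ is an equivalence,
\[
\bigoplus_{p\in\Z}\Hom_{\der^b(\mh)}(\mathbb{S}_2^pE,X)=\bigoplus_{p\in\Z}\Hom_{\der^b(\mh)}(E,\mathbb{S}_2^{-p}X)=\Hom_{\mc(\mh)}(\pi E,\pi X),
\]
which is finite-dimensional because $\mc(\mh)$ is $\Hom$-finite, a consequence of the $\tau_2$-finiteness of the quasitilted endomorphism algebra (Lemma~\ref{l:tau-finite}). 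Hence $S:=\{p\mid \Hom_{\der^b(\mh)}(\mathbb{S}_2^pE,X)\neq0\}$ is finite, and setting $V_X:=\bigoplus_{p\in S}(\mathbb{S}_2^pE)^{\oplus d_p}$ with $d_p=\dim_k\Hom_{\der^b(\mh)}(\mathbb{S}_2^pE,X)$, the canonical evaluation morphism $g_X\colon V_X\to X$ is a right $\add\mathcal{E}$-approximation: every morphism from an indecomposable $\mathbb{S}_2^qE$ to $X$ factors through $g_X$ by construction, and the indecomposables of $\add\mathcal{E}$ are exactly the $\mathbb{S}_2^qE$. The dual computation $\bigoplus_p\Hom_{\der^b(\mh)}(X,\mathbb{S}_2^pE)=\Hom_{\mc(\mh)}(\pi X,\pi E)$ is likewise finite-dimensional and yields left approximations. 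Therefore $\add\mathcal{E}$ is functorially finite, which together with the two earlier points establishes all three assertions.
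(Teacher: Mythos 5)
Your proof is correct and is essentially the argument the authors intend: the paper states this lemma without proof, treating it as routine. Your three steps (stability by orbit-shifting, rigidity via the orbit-category Hom formula and Lemma~\ref{l:tilting-to-cluster}(1), and functorial finiteness by observing that $\bigoplus_p\Hom_{\der^b(\mh)}(\mathbb{S}_2^pE,X)\cong\Hom_{\mc(\mh)}(\pi E,\pi X)$ is finite-dimensional so that an explicit finite evaluation map gives the approximation) correctly fill in exactly the omitted details.
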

In particualr, we may apply the Iyama-Yoshino's reduction to $\der^b(\mh)$ with respect to $\mathcal{E}$.  In this case, we denote by $\der^b(\mh)_E$ the Iyama-Yoshino's reduction of $\der^b(\mh)$ with respect to $\mathcal{E}$.
\begin{theorem}~\label{t:reduction-derived-category}
Let $\mh$ be a connected hereditary abelian category with tilting objects. Let $E$ be an exceptional object of $\mh$. The triangulated category $\der^b(\mh)_E$ is triangle equivalent to $\der^b(\mh')$ for certain hereditary abelian category $\mh'$ with tilting objects. Moreover, we have $\opname{rank} \go(\mh')=\opname{rank}\go(\mh)-1$.
\end{theorem}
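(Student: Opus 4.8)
The plan is to reduce the statement to Angeleri Hügel--Oppermann's Theorem~\ref{t:AO} by realising $E$ as a \emph{source} indecomposable projective module over a quasitilted algebra that is derived equivalent to $\mh$. Fix a tilting object $T$ of $\mh$ having $E$ as a direct summand, put $\Lambda:=\End_\mh(T)$, and recall that $\Lambda$ is quasitilted, hence of global dimension $\leq 2$ and $\tau_2$-finite by Lemma~\ref{l:tau-finite}, and that the derived tilting functor $\RHom_\mh(T,-)$ is a triangle equivalence $F:\der^b(\mh)\iso\der^b(\Lambda)$ sending the summand $E$ to the indecomposable projective $P:=\Hom_\mh(T,E)$. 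A triangle equivalence commutes with the (intrinsic) Serre functor, hence with $\mathbb{S}_2$, so $F(\mathcal{E})=\{\mathbb{S}_2^pP\}=:\mathcal{U}$; consequently $F$ restricts to an equivalence $\mathcal{Z}(\mathcal{E})\iso\mathcal{Z}(\mathcal{U})$ and induces a triangle equivalence $\der^b(\mh)_E\iso\der^b(\Lambda)_{\mathcal{U}}$ of the two Iyama--Yoshino reductions.

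The crux is to choose $T$ so that $P$ becomes a source, i.e. $\Hom_\Lambda(P,Q)=0$ for every indecomposable projective $Q\not\cong P$, which is precisely the hypothesis of Theorem~\ref{t:AO}. When $\mh$ has no nonzero projective objects this follows from the perpendicular category: by Proposition~\ref{p:perpendicular-category}(3) there is a tilting object $T_0$ of the hereditary category $E^\perp$ with $T:=E\oplus T_0$ a tilting object of $\mh$; since every indecomposable summand $T_i$ of $T_0$ lies in $E^\perp$ we have $\Hom_\mh(E,T_0)=0$, and under the tilting equivalence $\Hom_\Lambda(P,Q)\cong\Hom_\mh(E,T_i)=0$ for the projectives $Q$ attached to the $T_i$. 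Thus Theorem~\ref{t:AO} applies with $e$ the primitive idempotent corresponding to $E$, giving $\der^b(\mh)_E\iso\der^b(\Lambda)_{\mathcal{U}}\iso\der^b(\Lambda')$ with $\Lambda'=\Lambda/\Lambda e\Lambda$.

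It remains to identify $\Lambda'$. Any morphism $T_0\to T_0$ factoring through $\add E$ vanishes because $\Hom_\mh(E,T_0)=0$, so passing to the corner $(1-e)\Lambda(1-e)$ and killing $\Lambda e\Lambda$ yields $\Lambda'\cong\End_\mh(T_0)=\End_{E^\perp}(T_0)$, the last equality holding since $E^\perp$ is a full subcategory. As $T_0$ is a tilting object of the hereditary category $E^\perp$, the algebra $\Lambda'$ is quasitilted and $\der^b(\Lambda')\iso\der^b(E^\perp)$. Setting $\mh':=E^\perp$, which is hereditary with tilting objects by Proposition~\ref{p:perpendicular-category}(2), we obtain $\der^b(\mh)_E\iso\der^b(\mh')$. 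Finally, $\Lambda$ has $|T|=\rank\go(\mh)$ vertices by Lemma~\ref{l:tilting-object} and $\Lambda'$ has exactly one fewer; since the rank of the Grothendieck group is a derived invariant, $\rank\go(\mh')=\rank\go(\Lambda')=\rank\go(\mh)-1$.

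The main obstacle is the case in which $\mh$ \emph{does} contain nonzero projective objects, i.e. $\mh\cong\mod H$ for a hereditary algebra $H$ by Proposition~\ref{p:structure}(3); here Proposition~\ref{p:perpendicular-category}(3) is unavailable and the naive candidate $E\oplus T_0$ need not be rigid, as $\Ext^1_\mh(T_0,E)$ may be nonzero even when $T_0\in E^\perp$. I would resolve this by working one level up: constructing a tilting object $E\oplus T_0$ of the \emph{triangulated} category $\der^b(\mh)$ (rather than of $\mh$) with $E$ a source summand and $\End(E\oplus T_0)$ again quasitilted, using that the right perpendicular category of $E$ in $\der^b(\mh)$ is triangle equivalent to $\der^b(E^\perp)$; or, dually, by realising $E$ as a \emph{sink} injective via the left perpendicular category and invoking the dual of Theorem~\ref{t:AO}. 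Either route reduces the projective case to AO's theorem, after which the identification of the smaller quasitilted algebra with $\End_{E^\perp}(T_0)$ and the rank count proceed exactly as above.
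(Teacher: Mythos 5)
Your argument for the case where $\mh$ has no nonzero projective objects is essentially the paper's own proof: complete $E$ to a tilting object $E\oplus T_0$ with $T_0$ a tilting object of $E^\perp$ via Proposition~\ref{p:perpendicular-category}(3), observe that $\Hom_\mh(E,T_0)=0$ makes $P=\Hom_\mh(T,E)$ satisfy the source hypothesis of Theorem~\ref{t:AO}, invoke Lemma~\ref{l:tau-finite} for $\tau_2$-finiteness, and identify $\Lambda/\Lambda e\Lambda$ with $\End_{E^\perp}(T_0)$. That part is correct, and you even check the source hypothesis more explicitly than the paper does.

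The genuine gap is the case where $\mh$ contains nonzero projectives, i.e.\ $\mh\cong\mod H$ by Proposition~\ref{p:structure}(3). You correctly diagnose the obstruction ($\Ext^1_\mh(T_0,E)$ need not vanish for $T_0\in E^\perp$; e.g.\ $H=kA_2$, $E$ the simple projective, $E^\perp=\add S_1$ and $\Ext^1(S_1,E)\neq 0$), but you do not resolve it: ``I would resolve this by\dots'' is a plan, not a proof, and neither proposed route is obviously workable as stated. The tilting-complex route requires the endomorphism algebra of your complex to have global dimension at most $2$ (built into the paper's definition of $\tau_2$-finite), which fails for general tilting complexes over hereditary algebras; the left-perpendicular route requires $\Ext^1_\mh(E,N)=0$ for a tilting object $N$ of $\lefteqn{}^{\perp}E$, which you do not verify. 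The paper's resolution is more elementary: it asserts the existence of an $H$-module $M$ such that $M\oplus E$ is a tilting module and \emph{either} $\Hom_\mh(E,M)=0$ \emph{or} $\Hom_\mh(M,E)=0$, then applies Theorem~\ref{t:AO} or its dual accordingly, and obtains both the hereditarity of the reduced category and the rank drop from Happel's result \cite[Corollary 6.5]{H88} that $\End_\mh(M)$ is derived equivalent to a hereditary algebra with one fewer simple. Your rank count also silently assumes the no-projectives case; in the projective case it too needs the citation to \cite{H88}. To complete your proof you must either supply the module $M$ with the one-sided Hom-vanishing, or fully justify one of your two sketched alternatives.
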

\begin{proof}
  Let us first consider the case that $\mh$ does not contain nonzero projective objects.
By Proposition~\ref{p:perpendicular-category} $(3)$, there is a tilting object $M$ of $E^\perp$ such that $M\oplus E$ is a tilting object of $\mh$. Let $\Lambda=\End_\mh(M\oplus E)$ be the endomorphism algebra of $M\oplus E$ and $\Lambda'=\End_{E^\perp}(M)=\End_{\mh}(M)$ be the endomorphism algebra of $M$. It is clear that $\Lambda'\cong \Lambda/\Lambda e\Lambda$, where $e\in \Lambda$ is the primitive idempotent corresponding to $E$. Note that we have $\der^b(\Lambda)\cong \der^b(\mh)$ and $\der^b(E^\perp)\cong \der^b(\Lambda')$. According to Lemma~\ref{l:tau-finite}, we know that $\Lambda$ is $\tau_2$-finite. By applying Theorem~\ref{t:AO}, we conclude that $\der^b(\mh)_E\cong \der^b(E^{\perp})$.

If $\mh$ does contain nonzero projective objects, then $\mh\cong \mod H$ for a finite dimensional  hereditary algebra $H$. In this case, it is not hard to see that there is an $H$-module $M$ such that 
\begin{itemize}
	\item $M\oplus E$ is a tilting object of $\mh$; 
	 \item we have either $\Hom_\mh(E, M)=0$ or $\Hom_\mh(M,E)=0$.
	\end{itemize}
	Denote by $\Lambda'=\End_\mh(M)$ and $\Lambda=\End_\mh(M\oplus E)$. Let $e$ be the primitive idempotent of $\Lambda$ corresponding to $E$. We clearly have $\Lambda'=\Lambda/\Lambda e\Lambda$ in both cases. 
According to ~\cite[Corollary 6.5]{H88}, $\Lambda'$ is derived equivalent to a hereditary algebra $H'$ such that $\rank \go(\mod H')=\rank \go(\mod H)-1$.
If $\Hom_\mh(E, M)=0$,  we conclude that $\der^b(\mh)_E\cong \der^b(\Lambda')\cong \der^b(H')$ by Theorem~\ref{t:AO} . For the case that $\Hom_\mh(M,E)=0$, we apply the dual version of Theorem~\ref{t:AO} to obtain the desired result.
 \end{proof}

\section{The cluster-tilting graph of a hereditary algebra}~\label{s:reachable-algebra}

The aim of this section is to give an alternative proof for the connectedness of the cluster-tilting graph of a hereditary algebra. The strategy will be generalized in Section~\ref{s:proof} to prove Theorem~\ref{t:main-2}.
\begin{definition}~\label{d:reachable}
Let $\mt$ be a triangulated category or an abelian category over $k$ and $M,N$ two indecomposable rigid objects. We say that $M$ is reachable by $N$ if there exists a sequence of indecomposable rigid objects $M_1,\ldots, M_s\in \mt$ such that $M\oplus M_1, M_1\oplus M_2,\ldots, M_{s-1}\oplus M_s, M_s\oplus N$ are rigid in $\mt$.  
\end{definition}

It is clear that the above definition yields an equivalent relation on the set of isomorphism classes of indecomposable rigid objects of $\mt$.  We say that the category $\mt$ has the {\it reachable property} if there is exactly one equivalent class.

Let $H$ be a finite dimensional hereditary algebra over $k$ and $\mod H$ the category of finitely generated right $H$-modules. Denote by $\mc:=\mc(\mod H)$ the cluster category of $H$. If $\rank \go(\mod H)=1$, then each basic rigid object of $\mc$ is indecomposable. Consequently, each indecomposable rigid object of $\mc$ can only be reachable by itself.

In the following, we assume moreover that $\rank \go(\mod H)>1$.
\begin{proposition}~\label{p:reachable-algebra}
Let $M$ and $N$ be two arbitrary indecomposable rigid objects of $\mc$.
Then $M$ is reachable by $N$.
\end{proposition}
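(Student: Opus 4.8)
The plan is to argue by induction on $n:=\rank\go(\mod H)$, using the Iyama--Yoshino reduction of Theorem~\ref{t:reduction-cluster} as the engine that drops the rank by one. Since being reachable is an equivalence relation on isomorphism classes of indecomposable rigid objects (as noted after Definition~\ref{d:reachable}), it suffices to show that a single equivalence class exhausts all of them, and I would do this by connecting arbitrary $M$ and $N$ through the smaller cluster categories produced by the reduction. Throughout, call two indecomposable rigid objects $X,Y$ \emph{compatible} if $X\oplus Y$ is rigid; by the $2$-Calabi--Yau property this relation is symmetric, and by Lemma~\ref{l:tilting-to-cluster}(3) every indecomposable rigid object occurs as a summand of some cluster-tilting object, so compatible companions always exist.

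The base case $n=2$ is elementary: every basic cluster-tilting object has exactly two indecomposable summands, and the complements of a fixed indecomposable rigid object are governed by the explicit rank-two exchange recurrence, so a direct inspection (finite type $A_2$, or a Kronecker type quiver) shows that any two indecomposable rigid objects are joined by a chain of pairwise compatible ones.

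For the inductive step assume $n\geq 3$ and let $M,N$ be indecomposable rigid. The heart of the argument is the reduction: suppose one can produce a single indecomposable rigid object $E$ compatible with both $M$ and $N$. By Theorem~\ref{t:reduction-cluster}, $\mc_E\cong\mc(\mod H')$ with $\rank\go(\mod H')=n-1\geq 2$, and its indecomposable objects are exactly the indecomposable objects of $\mc$ compatible with $E$, taken modulo $\add E$; in particular $M$ and $N$ descend to indecomposable rigid objects of $\mc_E$. By the induction hypothesis they are reachable in $\mc_E$, say through $M_1,\dots,M_s$. Since rigidity in $\mc_E$ lifts to rigidity in $\mc$ (a standard feature of the Iyama--Yoshino reduction, compatible with the correspondence of Theorem~\ref{t:IY-reduction}, so that in fact $E\oplus M_i\oplus M_{i+1}$ is rigid in $\mc$), the very same sequence $M_1,\dots,M_s$ witnesses that $M$ is reachable by $N$ in $\mc$.

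Thus the inductive step collapses to the existence of a common compatible companion, and this is the step I expect to be the main obstacle. To produce such an $E$, I would complete $M$ to a cluster-tilting object and let its $n-1\geq 2$ complements of $M$ range over all indecomposable rigid objects of the rank-$(n-1)$ category $\mc_M$, reducing the task to locating, inside $\mc_M$, an object simultaneously compatible with $N$. When no single $E$ is visibly common to $M$ and $N$, the fallback is to chain: fixing the reference cluster-tilting object $P_1\oplus\cdots\oplus P_n$ (whose summands are pairwise compatible, hence all in one class), one shows that every indecomposable rigid object is reachable to a summand of it by bringing it, via mutation, into compatibility with some $P_i$, and then patches the resulting slices $\{X : X\oplus P_i \text{ rigid}\}$ together using transitivity of reachability. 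Making this gluing uniform---equivalently, guaranteeing that the equivalence classes determined by reductions at different objects always overlap---is where the real work lies; the reduction machinery and the induction hypothesis handle everything else.
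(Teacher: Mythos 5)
There is a genuine gap, and you have located it yourself without closing it: the entire difficulty of the proposition is concentrated in producing, for arbitrary $M$ and $N$, either a common compatible companion $E$ or a chain of compatible pairs linking $M$ to a summand of a reference cluster-tilting object. Your inductive step via Theorem~\ref{t:reduction-cluster} is sound once such an $E$ is in hand (and this reduction idea is exactly how the paper later proves connectedness of $\mathcal{G}_{ct}(\mc)$ in Proposition~\ref{p:BMRRT}, \emph{using} the present proposition as input), but the fallback you describe --- ``bringing it, via mutation, into compatibility with some $P_i$'' and ``patching the slices together'' --- is a restatement of the proposition, not an argument for it. The concrete obstruction is the case of a wild hereditary algebra $H$ and a rigid regular module $M$ all of whose tilting completions are entirely regular: every complement of $M$ is then again regular, so no amount of completing $M$ to a cluster-tilting object and inspecting its summands visibly produces anything compatible with a projective, and the induction has no seed.

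The paper's proof is not inductive at all. It fixes an indecomposable projective $P$ and shows directly that every indecomposable rigid object of $\mc$ is reachable by $P$, splitting into cases according to the AR-theoretic position: preprojective, preinjective, and shifted projective summands $\Sigma Q$ are handled by inspection; rigid regular modules over non-wild $H$ are handled because any tilting completion contains a preprojective or preinjective summand; and the critical wild regular case is resolved by invoking Unger's theorem \cite[Theorem~3.1]{U}, which supplies a path in the tilting graph $\mathcal{G}_t(\mod H)$ from a regular tilting module $T\supseteq M$ to some $T_r=N\oplus\overline{T_r}$ with $\Hom_H(Q,\overline{T_r})=0$ for an indecomposable projective $Q$; one then checks $\Sigma Q\oplus\overline{T_r}$ is rigid in $\mc$, linking $M$ to $\Sigma Q$ and hence to $P$. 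Some external input of this kind (a nontrivial fact about the tilting graph of a wild hereditary algebra) appears unavoidable, and your proposal does not supply a substitute for it.
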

\begin{proof}

Without loss of generality, we may assume that $H$ is connected.  Note that each indecomposable object of $\mc$ is either the image of an indecomposable $H$-module or the image of $\Sigma Q$ for an indecomposable projective $H$-module $Q$ under the projection $\pi:\der^b(\mod H)\to \mc$. Moreover, an indecomposable $H$-module $M$ is rigid in $\mod H$ if and only if $M$ is rigid in $\mc$.
It suffices to prove that each indecomposable rigid object $M\in \mc$ is reachable by an indecomposable projective $H$-module $P$.

 It is straightforward to check that if $M$ is one of the following three cases:  preprojective $H$-module, preinjective $H$-module or $M\cong \Sigma Q$ for some indecomposable projective $H$-module $Q$, then $M$ is reachable by $P$ in~$\mc$.
 
 Now assume that $M$ is an indecomposable rigid regular $H$-module. If $H$ is not of wild type, then each tilting module $T$ containing $M$ as a direct summand has an indecomposable preprojective  or preinjective $H$-module. Consequently, $M$ is reachable by $P$. 
 
 Let us assume that $H$ is of wild type and $M$ is regular. If there is a tilting $H$-module which contains $M$ and an indecomposable preprojective or preinjective $H$-module as direct summands, then $M$ is reachable by $P$.
 Otherwise,  let $T=M\oplus \overline{T}$ be a basic regular tilting $H$-module. According to~\cite[Theorem~3.1]{U}, there is path in the tilting graph $\mathcal{G}_t(\mod H)$
 \[\xymatrix{T=T_0\ar@{-}[r] &T_1\ar@{-}[r]&\cdots\ar@{-}[r] &T_r,}
 \] 
 such that there is an indecomposable direct summand $N$ of $T_r$ and an indecomposable projective $H$-module $Q$ such that $\Hom_H(Q, \overline{T_r})=0$, where $T_r=N\oplus \overline{T_r}$.
  A direct computation shows that $\Hom_\mc(\Sigma Q\oplus \overline{T_r}, \Sigma^2Q\oplus \Sigma \overline{T_r})=0$. Consequently, $M$ is reachable by $\Sigma Q$ and hence by $P$.
\end{proof}
\begin{remark}
The corresponding result for $\mod H$ is not true in general. For instance, let $H$ be the path algebra of the Kronecker quiver: $\xymatrix{1\ar@<0.5ex>[r] \ar@<-0.5ex>[r]&2}$. It is easy to see that an indecomposable projective module is not reachable by an indecomposable injective module.
\end{remark}

The following proposition was proved in~\cite{BMRRT}, which is a direct consequence of Proposition~\ref{p:bijection-tilting-cluster-tilting} and the connectedness of tilting graphs established in~\cite{HU}. Here we provide an alternative proof by induction on the rank of the Grothendieck group $\go(\mod H)$ of a hereditary algebra $H$.
\begin{proposition}\cite[Proposition 3.5]{BMRRT}~\label{p:BMRRT}
Let $\mc$ be the cluster category of a finite dimensional hereditary algebra $H$. The cluster-tilting graph~$\mathcal{G}_{ct}(\mc)$ is connected.
\end{proposition}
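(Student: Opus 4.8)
The plan is to prove Proposition~\ref{p:BMRRT} by induction on $n:=\rank\go(\mod H)$, using the Iyama–Yoshino reduction established in Theorem~\ref{t:reduction-cluster} together with the reachable property of Proposition~\ref{p:reachable-algebra}. The base case $n=1$ is immediate: here every basic cluster-tilting object of $\mc$ is indecomposable, so the cluster-tilting graph has a single vertex and is trivially connected. For the inductive step I assume the result for all connected hereditary algebras whose Grothendieck group has rank $<n$, and I must connect two arbitrary basic cluster-tilting objects $T$ and $T'$ of $\mc:=\mc(\mod H)$ by a path in $\mathcal{G}_{ct}(\mc)$.

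First I would reduce to the situation where $T$ and $T'$ share a common indecomposable direct summand. Write $T=E\oplus\overline{T}$ and $T'=E'\oplus\overline{T'}$ for indecomposable summands $E,E'$. The idea is to use Proposition~\ref{p:reachable-algebra}: since $E$ is reachable by $E'$, there is a chain of indecomposable rigid objects $E=M_0, M_1,\dots, M_s, M_{s+1}=E'$ with consecutive terms $M_i\oplus M_{i+1}$ rigid, hence (by Lemma~\ref{l:tilting-to-cluster}(3)) each $M_i\oplus M_{i+1}$ extends to a basic cluster-tilting object. My goal is to manufacture a path in $\mathcal{G}_{ct}(\mc)$ connecting a cluster-tilting object containing $E$ to one containing $E'$; then it suffices to connect cluster-tilting objects that share a fixed indecomposable summand.

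The heart of the argument is the following reduction for a \emph{fixed} common summand $E$. Set $\mc_E$ to be the Iyama–Yoshino reduction of $\mc$ with respect to $\add E$. By Theorem~\ref{t:reduction-cluster}, $\mc_E\cong \mc(\mod H')$ for a hereditary algebra $H'$ with $\rank\go(\mod H')=n-1$, and by Theorem~\ref{t:IY-reduction} the cluster-tilting subcategories of $\mc$ containing $\add E$ correspond bijectively to those of $\mc_E$. Moreover, by Remark~\ref{r:orbit} this bijection is compatible with the mutation (edge) relation, so it induces an isomorphism between the full subgraph of $\mathcal{G}_{ct}(\mc)$ on vertices containing $E$ and $\mathcal{G}_{ct}(\mc_E)\cong \mathcal{G}_{ct}(\mc(\mod H'))$. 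The inductive hypothesis (after passing to the connected components of $H'$, handled componentwise) then says the latter graph is connected. Hence any two cluster-tilting objects of $\mc$ containing $E$ are joined by a path in $\mathcal{G}_{ct}(\mc)$ each of whose vertices still contains $E$.

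Assembling the pieces, I would argue as follows. Given $T$ and $T'$, use the reachable chain $E=M_0,\dots,M_{s+1}=E'$ from Proposition~\ref{p:reachable-algebra} to pass through cluster-tilting objects: within the subgraph of vertices containing $M_i$ one moves freely, and at each step $M_i\oplus M_{i+1}$ rigid lets one switch from the $M_i$-subgraph to the $M_{i+1}$-subgraph, since both subgraphs contain a common cluster-tilting object built from $M_i\oplus M_{i+1}$. Connectedness of each subgraph comes from the inductive step above. This connects $T$ (which contains $E$) to $T'$ (which contains $E'$), proving $\mathcal{G}_{ct}(\mc)$ connected. The main obstacle I expect is the bookkeeping in the last paragraph: verifying that the edge relation genuinely transports across the Iyama–Yoshino bijection (this is exactly what Remark~\ref{r:orbit} is for) and that the reachable chain can be realized as moves staying inside successive subgraphs rather than as an abstract statement about rigidity. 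Care is also needed at the reduction step when $H'$ is disconnected, where one applies the induction to each connected component separately.
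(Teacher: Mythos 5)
Your proposal is correct and follows essentially the same route as the paper: induction on $\rank\go(\mod H)$, the reachable chain from Proposition~\ref{p:reachable-algebra} completed to cluster-tilting objects via Lemma~\ref{l:tilting-to-cluster}(3), and the Iyama--Yoshino reduction (Theorems~\ref{t:IY-reduction} and~\ref{t:reduction-cluster}, with Remark~\ref{r:orbit}) to connect consecutive cluster-tilting objects sharing a common summand. Your explicit remarks about handling disconnected $H'$ componentwise and about the edge relation transporting across the reduction are careful additions, but the underlying argument is identical to the paper's.
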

\begin{proof}
We prove this result by induction on the rank $n=\rank \go(\mod H)$ of the Grothendieck group $\go(\mod H)$. If $n=1$, then $H\cong k$. It is clear that the cluster-tilting graph is connected in this case.  

Let us assume that $n> 1$. Let $M=M_1\oplus  \overline{M}$ and $N=N_1\oplus \overline{N}$ be two arbitrary basic cluster-tilting objects of $\mc$, where $M_1$ and $N_1$ are indecomposable. It suffices to show that $M$ and $N$ are connected in $\mathcal{G}_{ct}(\mc)$.
 By Proposition~\ref{p:reachable-algebra}, $M_1$ is reachable by $N_1$ in $\mc$. Namely, there exist indecomposable rigid objects $X_0:=M_1, X_1,\cdots, X_s:=N_1\in \mc$ such that $X_0\oplus X_1, X_1\oplus X_2,\cdots, X_{s-1}\oplus X_s$ are rigid in $\mc$. According to Lemma~\ref{l:tilting-to-cluster} $(3)$, each rigid object of $\mc$ can be completed into a cluster-tilting object. Let  $T_0=X_0\oplus X_1\oplus \overline{T}_0$, $T_1=X_1\oplus X_2\oplus \overline{T}_1$, $\cdots$, $T_{s-1}=X_{s-1}\oplus X_s\oplus \overline{T}_{s-1}$ be cluster-tilting objects. Denote by $T_{-1}:=M$ and $ T_{s}:=N$. For any $0\leq i\leq s$, $X_i$ is a common direct summand of $T_{i-1}$ and $T_{i}$ and we can apply the Iyama-Yoshino's reduction of $\mc$ with respect to $X_i$.   According to Theorem~\ref{t:IY-reduction}, Theorem~\ref{t:reduction-cluster} and the induction hypothesis, we conclude that $T_{i-1}$ and $T_i$ are connected in the cluster-tilting graph $\mathcal{G}_{ct}(\mc)$. Hence $M$ and $N$ are connected in $\mathcal{G}_{ct}(\mc)$.
\end{proof}

\section{Reachability of $\coh\X$}~\label{s:reachable-x}

\subsection{Recollection on weighted projective line}
\subsubsection{Weighted projective lines}
We follow~\cite{GL1, M}. Fix a positive integer $t\geq 2$.
A {\it weighted projective line} $\X=\X(\mathbf{p},\boldsymbol{\lambda})$ over $k$ is given by a weighted sequence $\mathbf{p}=(p_1,\dots,p_t)$
of integers, and 
a  parameter sequence  $\boldsymbol{\lambda}=(\lambda_1\dots,\lambda_t)$  of pairwise distinct points of the projective line $\P_1(k)$. Let $\mathbb{L}$ be the rank one  abelian group generated by $\vec{x}_1,\ldots, \vec{x}_t$ with the relations
\[p_1\vec{x}_1=p_2\vec{x}_2=\cdots=p_t\vec{x}_t=:\vec{c},
\]
where the element $\vec{c}$ is called the {\it canonical element} of $\mathbb{L}$. Denote by \[\vec{\omega}:=(t-2)\vec{c}-\sum\limits_{i=1}^t\vec{x}_i\in \mathbb{L},\] which is called the {\it dualizing element} of $\mathbb{L}$. Each element $\vec{x}\in \mathbb{L}$ can be uniquely written into the {\it normal form}
\[\vec{x}=\sum_{i=1}^tl_i\vec{x}_i+l\vec{c}, ~\text{where~$0\leq l_i<p_i$ and $l\in \Z$.}
\] 
Let $\vec{x}=\sum_{i=1}^tl_i\vec{x}_i+l\vec{c}$ and $\vec{y}=\sum_{i=1}^tm_i\vec{x}_i+m\vec{c}\in \mathbb{L}$ be in normal form, denote by $\vec{x}\leq \vec{y}$ if $l_i\leq m_i$ for $i=1,\ldots, t$ and $l\leq m$. This defines a partial order on $\mathbb{L}$. It is known that each $\vec{x}\in \mathbb{L}$ satisfies exactly one of the two possibilities:
\[0\leq \vec{x}~\text{or}~\vec{x}\leq \vec{c}+\vec{\omega}.
\]

\subsubsection{The category $\coh\X$ of coherent sheaves}
Let  
\[R:=R({\bf p}, {\boldsymbol{\lambda}})=k[X_1,\cdots, X_t]/I
\] be the quotient of the polynomial ring $k[X_1,\cdots, X_t]$ by the ideal $I$ generated by $f_i=X_i^{p_i}-X_2^{p_2}+\lambda_iX_1^{p_1}$ for $3\leq i\leq t$.
 The algebra $R$ is $\mathbb{L}$-graded by setting $\deg X_i=\vec{x}_i$~for ~$i=1,\ldots, t$ and we have the decomposition of $R$ into $k$-subspace
\[R=\bigoplus_{\vec{x}\in \mathbb{L}}R_{\vec{x}}.
\]

The category $\coh\X$ of coherent sheaves over $\X$ is defined to be the quotient category
\[\coh\X:=\mod^{\mathbb{L}}R/\mod_0^{\mathbb{L}}R,
\]
where $\mod^{\mathbb{L}}R$ is the category of finitely generated $\mathbb{L}$-graded $R$-modules, while $\mod_0^{\mathbb{L}}R$ is the Serre subcategory of $\mathbb{L}$-graded $R$-modules of finite length. For each sheaf $E$ and $\vec{x}\in \mathbb{L}$, denote by $E(\vec{x})$ the  grading shift of $E$ with $\vec{x}$. The free module $R$ gives the structure  sheaf $\mathcal{O}$, and each line bundle is given by the grading shift $\mathcal{O}(\vec{x})$ for a unique element $\vec{x}\in \mathbb{L}$. Moreover, we have
\begin{eqnarray}~\label{e:hom}
\Hom_{\X}(\mathcal{O}(\vec{x}), \mathcal{O}(\vec{y}))=R_{\vec{y}-\vec{x}}~\text{for any ~$\vec{x},\vec{y}\in \mathbb{L}$.}
\end{eqnarray}
 In ~\cite{GL1}, Geigle and Lenzing proved that $\coh\X$ is a connected hereditary abelian category with tilting objects and has Serre duality of the form
\begin{eqnarray}~\label{e:serre-duality}
\D\Ext^1_{\X}(E,F)=\Hom_{\X}(F, E(\vec{\omega}))
\end{eqnarray}
for all $E,F\in \coh\X$.
In particular, $\coh\X$ admits almost split sequences with the Auslander-Reiten translation $\tau$ given by the grading shift with $\vec{\omega}$. Denote by $\vect\X$ the full subcategory of $\coh\X$ consisting of vector bundles, i.e. torsion-free sheaves, and by $\coh_0\X$  the full subcategory consisting of sheaves of finite length, i.e. torsion sheaves.
 Each coherent sheaf is the direct sum of a vector bundle and a finite length sheaf. Each vector bundle has a finite filtration by line bundles and there is no nonzero morphism from $\coh_0\X$ to $\vect \X$.  We remark that $\coh\X$ does not contain nonzero projective objects.
 
 \subsubsection{The structure of $\coh_0\X$}
 
  Denote by 
 \[{\bf p}_{\boldsymbol{\lambda}}:\mathbb{P}_1(k)\to \N,~ {\bf p}_{\boldsymbol{\lambda}}(\mu)=\begin{cases}
 p_i& \text{if $\mu=\lambda_i$ for some $i$,}\\ 1 &\text{else.}
 \end{cases}
 \]
 the weight function associated with $\X$. The following proposition gives an explicitly description of the structure of $\coh_0\X$. We refer to ~\cite{Ringel,SS} for the concept of  standard tubes.
 
 \begin{proposition}~\cite[Proposition 2.5]{GL1}~\label{p:structure-finite}
 	The category $\coh_0\X$ is an exact abelian, uniserial subcategory of $\coh\X$ which is stable under Auslander-Reiten translation. The components of the Auslander-Reiten quiver of $\coh_0\X$ form a family of pairwise orthogonal standard tubes $(\mt_{\mu})_{\mu\in\mathbb{P}_1(k)}$ with rank ${\bf p}_{\boldsymbol{\lambda}}(\mu)$.
 	\end{proposition}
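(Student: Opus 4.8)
The plan is to reduce the global assertion to a local analysis at each point of $\mathbb{P}_1(k)$, using that the Auslander-Reiten translation $\tau$ is the grading shift by $\vec{\omega}$ and that every torsion sheaf is supported on a finite set of points. First I would check that $\coh_0\X$ is a Serre subcategory of $\coh\X$: the class of finite length objects is closed under subobjects, quotients and extensions inside the abelian category $\coh\X$, so $\coh_0\X$ is an abelian subcategory on which the inclusion is exact. Stability under $\tau$ is then immediate, since the grading shift $E\mapsto E(\vec{\omega})$ is an autoequivalence of $\coh\X$ and therefore preserves the class of finite length objects.

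Next I would decompose $\coh_0\X$ according to support. For $\mu\in\mathbb{P}_1(k)$ let $\mt_\mu$ be the full subcategory of torsion sheaves supported at $\mu$; since torsion sheaves with disjoint support admit no nonzero morphisms, every object of $\coh_0\X$ decomposes uniquely into its components in the various $\mt_\mu$. For $\mu\neq\nu$ one has $\Hom_\X(\mt_\mu,\mt_\nu)=0$, and the $\Ext$-vanishing $\Ext^1_\X(\mt_\mu,\mt_\nu)=0$ then follows from the Serre duality formula~\eqref{e:serre-duality} together with the $\tau$-stability of $\mt_\nu$. This yields the pairwise orthogonality of the family $(\mt_\mu)_{\mu\in\mathbb{P}_1(k)}$.

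The heart of the argument is to identify each $\mt_\mu$ with a standard tube and to compute its rank. The category $\mt_\mu$ is governed by the localization of the homogeneous coordinate algebra $R$ at $\mu$. For an ordinary point $\mu\notin\{\lambda_1,\dots,\lambda_t\}$ this localization is a discrete valuation ring, whose finite length modules form a uniserial category with a single simple object, giving a homogeneous tube of rank $1=\mathbf{p}_{\boldsymbol{\lambda}}(\mu)$. For an exceptional point $\mu=\lambda_i$ the simple torsion sheaves supported at $\lambda_i$ are the $p_i$ sheaves $S_{i,0},\dots,S_{i,p_i-1}$ arising as the simple cokernels of the maps $\mathcal{O}((j-1)\vec{x}_i)\to\mathcal{O}(j\vec{x}_i)$ given by multiplication by $X_i$. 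The key computation is that $\tau S_{i,j}=S_{i,j}(\vec{\omega})\cong S_{i,j-1}$, so $\tau$ permutes these simples cyclically with period exactly $p_i$; this reflects that, in the relevant local grading at $\lambda_i$, shifting by $\vec{\omega}$ decreases the $\vec{x}_i$-degree by one modulo $\vec{c}=p_i\vec{x}_i$. Uniseriality of $\mt_\mu$ then follows because every indecomposable in $\mt_\mu$ is obtained by iterated self-extensions along this cyclic chain of simples and has a unique composition series.

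I expect the main obstacle to be the final identification of $\mt_{\lambda_i}$ with the \emph{standard} tube of rank $p_i$. This requires computing all extensions between the simples, namely that $\dim_k\Ext^1_\X(S_{i,j},S_{i,j'})$ equals $1$ when $j'\equiv j-1 \pmod{p_i}$ and $0$ otherwise, and then verifying that the resulting abelian category is equivalent to the category of finite-dimensional nilpotent representations of the cyclic quiver with $p_i$ vertices; equivalently, that the full subcategory coincides with the mesh category of its Auslander-Reiten quiver. By comparison, the orthogonality and the $\tau$-stability established in the earlier steps are routine.
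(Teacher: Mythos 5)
This proposition is imported verbatim from Geigle--Lenzing (\cite[Proposition 2.5]{GL1}); the paper supplies no proof of its own, so there is nothing internal to compare your argument against. Measured against the source, your outline follows the standard route: Serre-subcategory formalities, $\tau$-stability via the autoequivalence $E\mapsto E(\vec\omega)$, decomposition by support, orthogonality of the $\mt_\mu$ (your derivation of $\Ext^1$-vanishing from the Serre duality formula~(\ref{e:serre-duality}) plus $\tau$-stability is clean and correct), and then a local analysis at each point. That is essentially how Geigle and Lenzing argue, via the structure of $\mathbb{L}$-graded modules over the localization of $R$ at $\mu$.

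Two points remain genuinely open in your sketch, and they carry most of the content of the proposition. First, you assert rather than prove that the simples supported at $\lambda_i$ are exactly $S_{i,0},\dots,S_{i,p_i-1}$ (and that an ordinary point carries a unique simple); this exhaustiveness is precisely what the localization is needed for, namely that $\mt_{\lambda_i}$ is equivalent to finite length $\Z/p_i\Z$-graded modules over a graded local ring, i.e.\ to nilpotent representations of the cyclic quiver with $p_i$ vertices, and that twisting by $\vec{x}_l$ for $l\neq i$ and by $\vec c$ acts trivially on these simples (which you use implicitly when computing $\tau S_{i,j}=S_{i,j-1}$, since $\vec\omega$ has nonzero coefficients at every $\vec x_l$). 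Second, the passage from the $\Ext$-quiver computation $\dim_k\Ext^1_\X(S_{i,j},S_{i,j'})=\delta_{j',\,j-1\;(\mathrm{mod}\;p_i)}$ to uniseriality and standardness of the tube is not automatic; you either get it for free from the explicit equivalence with cyclic-quiver representations just mentioned, or you must invoke the Gabriel/Amdal--Ringdal characterization of uniserial length categories by their $\Ext$-quivers. (The $\Ext$ computation itself is easy by Serre duality once one knows the simples are pairwise non-isomorphic bricks, so, contrary to your closing remark, it is the classification of simples and the equivalence of categories, not the $\Ext$ computation, that is the real work.) A minor slip: your indexing of $S_{i,j}$ as the cokernel of $\mathcal{O}((j-1)\vec x_i)\to\mathcal{O}(j\vec x_i)$ is shifted by one relative to the convention~(\ref{e:exceptional-simple}) used in the paper.
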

 As a consequence, each indecomposable object of $\coh_0\X$ is uniquely determined by its socle and its length. 
 For each $\mu\in \mathbb{P}_1(k)\backslash \boldsymbol{\lambda}$, there is a unique simple sheaf $S_\mu\in \mt_\mu$ (called {\it ordinary simple sheaf}) concentrated at $\mu$ with a short exact sequence
 \begin{eqnarray}~\label{e:ordinary-simple}
 0\to \mathcal{O}\to\mathcal{O}(\vec{c})\to S_\mu\to 0.
 \end{eqnarray}
 For $\lambda_i$, there are simple sheaves $S_{i,0},\ldots,S_{i,{p_i-1}}\in \mt_{\lambda_i}$ (called {\it exceptional simple sheaves}) concentrated at $\lambda_i$ with short exact sequences
 \begin{eqnarray}~\label{e:exceptional-simple}
 0\to\mathcal{O}(j\vec{x}_i)\xrightarrow{X_i}\mathcal{O}((j+1)\vec{x}_i)\to S_{i,j}\to 0
 \end{eqnarray}
 for $0\leq j\leq p_i-1$.
  The only non-trivial extension between them are
 \[\Ext^1_\X(S_\mu,S_\mu)\cong k~\text{and}~\Ext^1_\X(S_{i,j}, S_{i,j'})\cong k ~\text{if $j-j'\equiv 1 (\mod p_i)$}.
 \]
 The set $\{S_\mu, \mu\in \mathbb{P}_1(k)\backslash \boldsymbol{\lambda}, S_{i,j}, 1\leq i\leq t, 0\leq j\leq p_i-1\}$ forms a representatives set of isomorphism classes of simple objects of $\coh_0\X$.  Note that we have $\tau S_{i,j}=S_{i,j-1}$ for $j\in \Z/p_i\Z$.
 The following is an easy consequence of $(2.5.1), (2.5.3)$ and Proposition 2.6 of~\cite{GL1}.
 \begin{lemma}~\label{l:hom-vect-tube}
 Let $\mu\in \mathbb{P}_1(k)$ such that $\mathbf{p}_{\boldsymbol{\lambda}}(\mu)=1$ and $M$ a nonzero object in $\mt_\mu$. For any nonzero vector bundle $L$, we have $\Hom_\X(L,M)\neq 0$. 
 \end{lemma}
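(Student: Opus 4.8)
The plan is to reduce the statement, in two formal steps, to the single nonvanishing $\Hom_\X(\mathcal{O}(\vec{z}),S_\mu)\neq 0$ for an arbitrary line bundle $\mathcal{O}(\vec{z})$, and then to establish that nonvanishing directly from the Hom-formula~\eqref{e:hom}.

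First I would isolate a skyscraper-type target and a line-bundle source. Since $\mathbf{p}_{\boldsymbol{\lambda}}(\mu)=1$, Proposition~\ref{p:structure-finite} shows that $\mt_\mu$ is a standard tube of rank one, so, $\coh_0\X$ being uniserial, the socle of any nonzero object of $\mt_\mu$ is a sum of copies of the unique simple object $S_\mu$ of~\eqref{e:ordinary-simple}; in particular every nonzero $M\in\mt_\mu$ admits a monomorphism $S_\mu\hookrightarrow M$. On the source side, a nonzero vector bundle $L$ has a finite filtration by line bundles, so it admits an epimorphism $L\twoheadrightarrow\mathcal{O}(\vec{z})$ onto a quotient line bundle. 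If I can produce a nonzero map $\mathcal{O}(\vec{z})\to S_\mu$ — necessarily an epimorphism, as $S_\mu$ is simple — then the composite $L\twoheadrightarrow\mathcal{O}(\vec{z})\twoheadrightarrow S_\mu\hookrightarrow M$ is a nonzero element of $\Hom_\X(L,M)$, which finishes the reduction.

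It thus remains to show $\Hom_\X(\mathcal{O}(\vec{z}),S_\mu)\neq 0$ for every $\vec{z}\in\mathbb{L}$. The key observation is that $S_\mu$ is invariant under grading shift: applying the grading shift by $\vec{x}$ to~\eqref{e:ordinary-simple} exhibits $S_\mu(\vec{x})$ as the cokernel of multiplication by the same degree-$\vec{c}$ form defining $S_\mu$, hence as a length-one sheaf supported at $\mu$, so $S_\mu(\vec{x})\cong S_\mu$ for all $\vec{x}$. This is exactly the point where the hypothesis $\mathbf{p}_{\boldsymbol{\lambda}}(\mu)=1$ is indispensable: at an exceptional point the shift instead permutes the distinct simples $S_{i,j}$, and the conclusion genuinely fails. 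Granting the invariance, $\Hom_\X(\mathcal{O}(\vec{z}),S_\mu)\cong\Hom_\X(\mathcal{O},S_\mu(-\vec{z}))\cong\Hom_\X(\mathcal{O},S_\mu)$, so I only need the last group to be nonzero. Applying $\Hom_\X(\mathcal{O},-)$ to~\eqref{e:ordinary-simple} and using~\eqref{e:hom}, this reduces to the non-surjectivity of the multiplication map $R_0\to R_{\vec{c}}$; but $\dim_k R_0=1$, whereas $R_{\vec{c}}$ already contains the linearly independent classes of $X_1^{p_1}$ and $X_2^{p_2}$, so the cokernel is nonzero and injects into $\Hom_\X(\mathcal{O},S_\mu)$.

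The only genuinely delicate point is the grading-shift invariance $S_\mu(\vec{x})\cong S_\mu$, together with the accompanying observation that weight one is precisely what makes it hold; the two reduction steps are purely formal consequences of uniseriality, the line-bundle filtration, and the Hom-formula, while the closing nonvanishing is an elementary computation in the graded ring $R$.
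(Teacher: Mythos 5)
Your proof is correct. The paper itself gives no argument for this lemma, deferring to (2.5.1), (2.5.3) and Proposition~2.6 of the cited Geigle--Lenzing paper, and your three steps --- the socle embedding $S_\mu\hookrightarrow M$ from the rank-one tube, the line-bundle quotient $L\twoheadrightarrow\mathcal{O}(\vec{z})$ from the filtration, and the shift-invariance $S_\mu(\vec{x})\cong S_\mu$ at an ordinary point reducing everything to $\dim_k R_{\vec{c}}=2>1=\dim_k R_0$ --- are precisely the facts being invoked there, so your write-up is a faithful and complete reconstruction of the intended argument.
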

For each $\mu\in \mathbb{P}_1(k)$,  we will also denote by $\mt_\mu$ the subcategory  of $\coh_0\X$ consisting of objects which are finite direct sums of indecomposable objects lying on the AR quiver $\mt_\mu$.  
The following is a consequence of standard tubes ({\it cf.} Chapter X of~\cite{SS}).
\begin{lemma}~\label{l:hom-tube}
Let $\mt_\mu$ be a standard tube of rank $d$. Then
\begin{itemize}
\item[(1)] An indecomposable object $M\in \mt_\mu$ is rigid if and only if $l(M)<d$, where $l(M)$ is the length of $M$;
\item[(2)] Let $M\in\mt_\mu$ be an indecomposable rigid object and $N$ the socle of $M$, then $M\oplus N$ is rigid.
\end{itemize}
\end{lemma}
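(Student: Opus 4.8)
The plan is to reduce both statements to the vanishing of suitable $\Hom$-spaces via Serre duality and then to settle these by hand using the uniserial structure of the tube. Throughout I would label the $d$ simple objects of $\mt_\mu$ cyclically as $S_1,\dots,S_d$ (indices read modulo $d$), so that by the discussion preceding the lemma the Auslander--Reiten translation acts by $\tau S_j=S_{j-1}$. By Proposition~\ref{p:structure-finite} the tube $\mt_\mu$ is uniserial and each indecomposable is determined by its socle and length; thus an indecomposable $M$ of length $\ell:=l(M)$ with socle $S_i$ has composition factors $S_i,S_{i+1},\dots,S_{i+\ell-1}$ read from the socle upward, top $S_{i+\ell-1}$, and $\tau M$ is the indecomposable of length $\ell$ with socle $S_{i-1}$. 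Since $\coh_0\X$ is stable under $\tau$, the Serre duality~(\ref{e:serre-duality}) restricts to $\mt_\mu$ and gives $\Ext^1_\X(X,Y)\cong\D\Hom_\X(Y,\tau X)$ for $X,Y\in\mt_\mu$.

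For part~(1) I would rewrite $\Ext^1_\X(M,M)\cong\D\Hom_\X(M,\tau M)$ and analyse morphisms $M\to\tau M$. Any nonzero such morphism factors as a surjection onto its image followed by an inclusion, so its image is simultaneously a nonzero quotient of $M$ and a nonzero subobject of $\tau M$. By uniseriality every nonzero subobject of $\tau M$ has socle $S_{i-1}$, while the quotients of $M$ are exactly the indecomposables with socle $S_{i+j}$ for $0\le j\le\ell-1$. Hence a nonzero morphism can exist only if $S_{i+j}\cong S_{i-1}$ for some such $j$, i.e.\ $j\equiv-1\ (\mod d)$, which is possible precisely when $\ell-1\ge d-1$, that is $\ell\ge d$. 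When $\ell<d$ this forces $\Hom_\X(M,\tau M)=0$, so $M$ is rigid; when $\ell\ge d$ I would exhibit an explicit nonzero composite $M\twoheadrightarrow L\hookrightarrow\tau M$, where $L$ is the common sub-quotient of socle $S_{i-1}$ and length $\ell-d+1$ obtained by taking $j=d-1$ (it is a quotient of $M$ and a subobject of $\tau M$, these agreeing since indecomposables are determined by socle and length), showing $\Ext^1_\X(M,M)\neq0$. This establishes the equivalence that $M$ is rigid if and only if $l(M)<d$.

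For part~(2), write $N=S_i$ for the socle of the rigid indecomposable $M$ (so $1\le\ell<d$, and in particular $d\ge2$). Since $M$ is rigid by hypothesis and $\Ext^1_\X(S_i,S_i)\cong\D\Hom_\X(S_i,S_{i-1})=0$, it remains only to kill the two cross-terms. Again by Serre duality, $\Ext^1_\X(M,N)\cong\D\Hom_\X(S_i,\tau M)$ vanishes because the only simple subobject of $\tau M$ is its socle $S_{i-1}\not\cong S_i$, and $\Ext^1_\X(N,M)\cong\D\Hom_\X(M,S_{i-1})$ vanishes because a nonzero morphism $M\to S_{i-1}$ would force the top $S_{i+\ell-1}$ of $M$ to equal $S_{i-1}$, i.e.\ $\ell\equiv0\ (\mod d)$, which is impossible for $0<\ell<d$. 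Hence all four Ext-groups attached to $M\oplus N$ vanish and $M\oplus N$ is rigid.

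The arguments are elementary once this dictionary is fixed, so the only real care needed is the bookkeeping of the cyclic indices modulo $d$ and the correct matching of socles of sub- and quotient objects; this index arithmetic, together with the verification that the constructed surjection-followed-by-inclusion is genuinely nonzero (which holds because its image is a nonzero object), is the one place where I expect to have to be careful rather than to meet a genuine obstacle.
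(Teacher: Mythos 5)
Your proof is correct. Note that the paper itself offers no argument for this lemma --- it is stated as ``a consequence of standard tubes'' with a pointer to Chapter~X of [SS] --- so you are not diverging from the paper so much as supplying the proof it omits, and what you supply is essentially the standard one for serial/standard tubes. The two pillars of your argument both check out: Serre duality in the form $\Ext^1_\X(E,F)\cong\D\Hom_\X(F,\tau E)$ does restrict to $\mt_\mu$ since $\coh_0\X$ is $\tau$-stable, and the uniserial structure (Proposition~\ref{p:structure-finite}) gives exactly the dictionary you use: every nonzero subobject of a uniserial object has the same socle as the object, the nonzero quotients of $M$ are the uniserials with socle $S_{i+j}$, $0\le j\le \ell-1$, and the unique simple quotient of $M$ is its top $S_{i+\ell-1}$. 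The index bookkeeping is consistent with the paper's conventions ($\tau S_{i,j}=S_{i,j-1}$ and $\Ext^1_\X(S_{i,j},S_{i,j'})\neq 0$ iff $j-j'\equiv 1$, which forces the composition factors to increase from socle to top), the non-rigidity of $M$ when $\ell\ge d$ is genuinely witnessed by the composite $M\twoheadrightarrow L\hookrightarrow\tau M$ with $L\neq 0$, and in part~(2) all four $\Ext$-groups are accounted for, with $d\ge 2$ correctly guaranteed by $1\le\ell<d$. No gaps.
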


\subsubsection{The classification}
Denote by $p=\lcm(p_1,\cdots,p_t)$ the least common multiple of $p_1,\cdots, p_t$. The {\it genus} $g_\X$ of $\X$ is defined as 
\[g_\X=1+\frac{1}{2}((t-2)p-\sum_{i=1}^t\frac{p}{p_i}).
\]
A weighted projective line of genus $g_\X<1$ ($g_\X=1$, resp. $g_\X>1$) is called  of {\it domestic} ({\it tubular}, resp. {\it wild}) type. The domestic types are, up to permutation, $(1,p)$ with $p\geq 1$, $(p,q)$ with $p,q\geq 2$, $(2,2,n)$ with $n\geq 2$, $(2,3,3)$, $(2,3,4)$ and $(2,3,5)$, whereas the tubular types are, up to permutation, $(2,2,2,2)$, $(3,3,3)$, $(2,4,4)$ and $(2,3,6)$.  Among others, let us mention that a weighted projective line of domestic type is derived equivalent to a finite dimension hereditary algebra of tame type, whereas weighted projective lines of tubular type or wild type are never derived equivalent to  finite dimensional hereditary algebras. We need the following structure property on Auslander-Reiten quivers of $\coh\X$ for a weighted projective line $\X$, where  $(1)$ is proved by ~\cite[Theorem 5.6]{GL1}  and $(2)$ is taken from~\cite[Proposition 4.5]{LP} .
\begin{proposition}~\label{p:AR-quiver}
Let $\X$ be a weighted projective line.
\begin{itemize}
\item[(1)] If $\X$ is of tubular type, then each AR component of $\coh\X$ is a standard tube;
\item[(2)] If $\X$ is of wild type, then each AR component in $\vect\X$ has shape $\Z A_\infty$;
\item[(3)] The AR quiver of $\coh\X$ admits multiple arrows if and only if $\X$ is of type $(1,1)$.
\end{itemize}
\end{proposition}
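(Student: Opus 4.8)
Parts $(1)$ and $(2)$ are quoted from \cite{GL1} and \cite{LP}, so the whole content to be proved is $(3)$, and my plan is to derive it from $(1)$, $(2)$ and the description of $\coh_0\X$ in Proposition~\ref{p:structure-finite}. The first step is to localise where a multiple arrow could possibly sit. By Proposition~\ref{p:structure-finite} the torsion part $\coh_0\X$ is a union of pairwise orthogonal standard tubes; each tube is closed under $\tau$ and under the middle terms of its almost split sequences, so the tubes are honest components of the AR quiver of $\coh\X$, and tubes carry only simple arrows. On the other hand, for $V\in\vect\X$ the almost split sequence $0\to\tau V\to E\to V\to 0$ has torsion-free middle term (an extension of torsion-free sheaves is torsion-free), so $\vect\X$ is also closed under almost split sequences and its components are components of $\coh\X$. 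Hence every multiple arrow must occur inside $\vect\X$. Now if $\X$ is tubular, part $(1)$ forces every component to be a tube, and if $\X$ is wild, part $(2)$ forces every component of $\vect\X$ to be of type $\Z A_\infty$; in both cases all arrows are simple. Therefore a multiple arrow can exist only in the domestic case, and this reduces $(3)$ to analysing the vector-bundle components of a domestic $\X$.

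For the ``if'' direction I would treat $\X=\X(1,1)$ by hand. Here $p_1=p_2=1$, so $\vec x_1=\vec x_2=\vec c$, the group $\mathbb L$ equals $\Z\vec c$ and $\coh\X\cong\coh\P^1$. Since $\vec\omega=(t-2)\vec c-\sum_i\vec x_i=-2\vec c$, the Serre duality~\eqref{e:serre-duality} gives $\tau\mathcal{O}(\vec x)=\mathcal{O}(\vec x-2\vec c)$ and $\Ext^1_\X(\mathcal{O}(\vec x),\mathcal{O}(\vec x-2\vec c))\cong k$, so the almost split sequence ending at $\mathcal{O}(\vec x)$ is the (twisted) Euler sequence
\[
0\to\mathcal{O}(\vec x-2\vec c)\to\mathcal{O}(\vec x-\vec c)^{2}\to\mathcal{O}(\vec x)\to 0 .
\]
Its middle term contains $\mathcal{O}(\vec x-\vec c)$ with multiplicity two, which is exactly a double arrow; so the AR quiver of $\X(1,1)$ has multiple arrows. (For $t=2$ one can see the same thing from~\eqref{e:hom}: the irreducible maps between consecutive line bundles are the multiplications $\mathcal{O}(\vec x)\xrightarrow{X_i}\mathcal{O}(\vec x+\vec x_i)$, and for $(1,1)$ the two maps $X_1,X_2$ both land in $\mathcal{O}(\vec x+\vec c)$ and are independent modulo the radical square, since there is no intermediate line bundle and, $t$ being $2$, no relation relating them.)

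For the ``only if'' direction in the remaining domestic cases I would invoke the known shape of the vector-bundle component. By Theorem~\ref{t:classification} and \cite{GL1} a domestic $\X$ is derived equivalent to a connected tame hereditary algebra $H$; under $\der^b(\coh\X)\cong\der^b(\mod H)$ the vector bundles make up the transjective (connecting) component, which is of the form $\Z\tilde\Delta$ for the extended Dynkin quiver $\tilde\Delta$ of $H$. Such a translation quiver has a multiple arrow if and only if $\tilde\Delta$ itself has a multiple edge; over the algebraically closed field $k$ the only tame quiver with a genuine double arrow is the Kronecker quiver $\tilde A_1$, and $\tilde A_1$ is precisely the type of $\X(1,1)$. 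Consequently no domestic $\X$ other than $\X(1,1)$ produces a multiple arrow, which together with the previous paragraph proves $(3)$.

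The step I expect to be the main obstacle is the ``only if'' direction for domestic lines with $t\ge 3$. There the line bundles are no longer dense in $\vect\X$ and the almost split sequences ending at a line bundle already involve higher-rank bundles, so the simple-arrow property cannot be extracted from the elementary computation with the degrees $\vec x_i$ alone; indeed, a naive count is misleading, because whenever two weights are trivial the relations $f_i$ among the generators force the seemingly extra arrow back into the radical square. To make the argument airtight one really has to identify the full transjective component as $\Z\tilde\Delta$ --- either by transporting the shape of the connecting component of $\mod H$ across the derived equivalence, or by checking directly that every almost split sequence in $\vect\X$ has multiplicity-free middle term --- and this identification for all domestic types except $(1,1)$ is the heart of the matter.
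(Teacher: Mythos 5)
Your argument is correct, and it is worth noting that the paper itself offers no proof of this proposition: it cites \cite[Theorem 5.6]{GL1} for $(1)$ and \cite[Proposition 4.5]{LP}$ $ for $(2)$ and simply asserts $(3)$, so your write-up supplies a justification where the paper gives none. Your reduction is sound: since $\coh_0\X$ is closed under $\tau$ and under extensions, the almost split sequence ending at a torsion sheaf lies entirely in $\coh_0\X$, and dually no irreducible map connects $\vect\X$ with $\coh_0\X$; since standard tubes and $\Z A_\infty$ carry no multiple arrows, parts $(1)$, $(2)$ and Proposition~\ref{p:structure-finite} localize any multiple arrow to the vector-bundle part of a domestic $\X$. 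The Euler-sequence computation for $(1,1)$ is the same one the paper implicitly relies on (it records right after the proposition that $\vect\X$ for type $(1,1)$ is a single $\Z Q$ component for the Kronecker quiver $Q$). The only load-bearing input you use that is not quoted in the paper is the identification of $\vect\X$ for domestic $\X$ with $\Z\widetilde{\Delta}$ for the associated extended Dynkin diagram; this is standard (it is part of the same structure theory in \cite{GL1} from which $(1)$ is drawn, and can alternatively be obtained, as you say, by transporting the transjective component across the derived equivalence with a tame hereditary algebra), and combined with the observation that $\widetilde{A}_1$ is the only extended Dynkin diagram with a multiple edge it closes the ``only if'' direction. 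One cosmetic caveat: ``type $(1,1)$'' should be read up to the usual normalization deleting weight-one entries, since, e.g., $\X(1,1,1)$ gives the same category $\coh\P^1$; your proof handles this automatically because it reduces to the Kronecker case.
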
 

Let $\X$ be a weighted projective line of type $(1,1)$. In this case, we have $\vec{x}_1=\vec{x}_2=\vec{c}$ and $\vec{\omega}=-2\vec{c}$. According to \cite[Proposition 4.1]{GL1}, $\mathcal{O}\oplus \mathcal{O}(\vec{c})$ is a tilting object of $\coh\X$, whose endomorphism algebra is isomorphic to the path algebra $kQ$ of the Kronecker quiver $Q: \xymatrix{1\ar@<0.5ex>[r] \ar@<-0.5ex>[r]&2}$. In particular, $\coh\X$ is derived equivalent to $kQ$. As a consequence, we deduce that $\vect\X$ has precisely one connected AR component which has the shape $\Z Q$.
The following is a direct consequence of the AR quiver of $\coh \X$.
\begin{corollary}~\label{c:type1-1}
Let $\X$ be a weighted projective line of type $(1,1)$, then each exceptional coherent sheaf  is a line bundle.
\end{corollary}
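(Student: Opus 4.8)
The plan is to split an exceptional sheaf according to the torsion/torsion-free dichotomy, rule out the torsion case, and then identify the remaining indecomposable vector bundles with line bundles.

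First I would use that an exceptional sheaf $E$ is by definition indecomposable and rigid, and that each indecomposable object of $\coh\X$ lies either in $\coh_0\X$ or in $\vect\X$. For $\X$ of type $(1,1)$ all weights equal $1$, so the weight function $\mathbf{p}_{\boldsymbol{\lambda}}$ is constantly $1$ and, by Proposition~\ref{p:structure-finite}, every tube $\mt_\mu$ in $\coh_0\X$ is homogeneous of rank $1$. By Lemma~\ref{l:hom-tube}(1) an indecomposable object $M$ of a rank-$1$ tube is rigid if and only if $l(M)<1$, i.e. $M=0$; hence $\coh_0\X$ contains no nonzero rigid object and $E$ cannot be a torsion sheaf. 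Therefore $E$ is an indecomposable vector bundle, and it remains to show that every indecomposable vector bundle is a line bundle.

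By the discussion preceding the corollary (see also Proposition~\ref{p:AR-quiver}(3)) the category $\vect\X$ has a unique connected Auslander--Reiten component, of shape $\Z Q$ with $Q$ the Kronecker quiver; since $Q$ has two vertices joined by a double arrow, this component is a single chain with double arrows whose translation $\tau$ shifts by two steps. I would then locate the line bundles inside it: from \eqref{e:hom} one has $\dim_k \Hom_\X(\mathcal{O}(n\vec{c}),\mathcal{O}((n+1)\vec{c}))=\dim_k R_{\vec{c}}=2$, the irreducible maps $\mathcal{O}(n\vec{c})\rightrightarrows\mathcal{O}((n+1)\vec{c})$ assemble into the Euler-type almost split sequences $0\to\mathcal{O}(n\vec{c})\to\mathcal{O}((n+1)\vec{c})^2\to\mathcal{O}((n+2)\vec{c})\to 0$, and $\tau\mathcal{O}(n\vec{c})=\mathcal{O}(n\vec{c}+\vec{\omega})=\mathcal{O}((n-2)\vec{c})$. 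Thus the line bundles $\{\mathcal{O}(n\vec{c})\}_{n\in\Z}$ already form a full connected translation subquiver isomorphic to $\Z Q$; as $\vect\X$ has only one such component, every indecomposable vector bundle, in particular $E$, must be a line bundle.

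The main obstacle is this last step, namely verifying that the line bundles exhaust the Auslander--Reiten component, i.e. that the displayed sequences really are the almost split sequences of $\vect\X$. If one prefers to avoid this computation, an alternative is to pass through the derived equivalence $\der^b(\coh\X)\cong\der^b(kQ)$ afforded by the tilting object $\mathcal{O}\oplus\mathcal{O}(\vec{c})$: an exceptional sheaf corresponds to a shift of an exceptional $kQ$-module, and for the Kronecker algebra the only indecomposable rigid modules are the preprojective and preinjective ones, since every regular module lies in a homogeneous tube and has self-extensions. A dimension-vector computation using \eqref{e:hom} then matches these modules with the line bundles, and checking this matching is the point requiring care.
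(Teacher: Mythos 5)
Your proposal is correct and follows essentially the same route as the paper: the paper simply invokes the structure of the AR quiver of $\coh\X$ for type $(1,1)$ (homogeneous tubes in $\coh_0\X$, hence no rigid torsion sheaves, and a single $\Z Q$-component of $\vect\X$ consisting of the line bundles $\mathcal{O}(n\vec{c})$, deduced from the derived equivalence with the Kronecker algebra via the tilting object $\mathcal{O}\oplus\mathcal{O}(\vec{c})$). You have merely filled in the details that the paper leaves implicit, and both of your suggested ways of closing the last step (explicit almost split sequences, or transport of exceptional objects along the derived equivalence) are sound.
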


\subsubsection{The rank function}
Let $\go(\X)$ be the Grothendieck group of $\coh\X$. For  a coherent sheaf $M$, denote by $[M]$ the image of $M$ in $\go(\X)$. It was shown in~\cite{GL1}  that $\go(\X)$ is a free $\Z$-module of rank $\sum_{i=1}^t(p_i-1)+2$ endowed with basis $[\mathcal{O}(\vec{x})]$ for $ 0\leq \vec{x}\leq \vec{c}$.
The {\it rank function } $\opname{rk}:\go(\X)\to\Z$ is the $\Z$-linear function uniquely determined by
\[\opname{rk}(\mathcal{O}(\vec{x}))=1~\text{for any $\vec{x}\in \mathbb{L}$}.
\]
The following property on rank function is useful ({\it cf.} \cite[Corollary 1.8.2]{GL1} or Section 2.3 in~\cite{LP}).
\begin{lemma}~\label{l:rank}
For each $E\in \coh\X$ and $\vec{x}\in \mathbb{L}$, we have  $\opname{rk}(E)=\opname{rk}(E(\vec{x}))$. Moreover, $E\in \coh_0\X$ if and only if $\opname{rk}(E)=0$. Assume moreover that $E$ is indecomposable, then $E$ is a line bundle if and only if $\opname{rk}(E)=1$.
\end{lemma}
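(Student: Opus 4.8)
The plan is to reduce everything to the $\Z$-linearity of $\opname{rk}$, combined with the two structural facts recalled above: that $\go(\X)$ is generated by the line-bundle classes $[\mathcal{O}(\vec{x})]$ with $0\leq\vec{x}\leq\vec{c}$, and that every coherent sheaf splits as $V\oplus F$ with $V\in\vect\X$ and $F\in\coh_0\X$, where $V$ carries a finite filtration by line bundles. Once these are in hand, each of the three assertions follows by evaluating $\opname{rk}$ on the appropriate generators.

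For the first assertion I would note that the grading shift $(-)(\vec{x})$ is an exact autoequivalence of $\coh\X$ and hence induces a group automorphism $\sigma_{\vec{x}}$ of $\go(\X)$ sending $[\mathcal{O}(\vec{y})]$ to $[\mathcal{O}(\vec{y}+\vec{x})]$. Then $\opname{rk}$ and $\opname{rk}\circ\sigma_{\vec{x}}$ are two $\Z$-linear functions on $\go(\X)$, and since $\opname{rk}(\mathcal{O}(\vec{y}))=1=\opname{rk}(\mathcal{O}(\vec{y}+\vec{x}))$ for every $\vec{y}$, they agree on the generating set $\{[\mathcal{O}(\vec{y})]\mid 0\leq\vec{y}\leq\vec{c}\}$ and therefore coincide; this yields $\opname{rk}(E)=\opname{rk}(E(\vec{x}))$ for all $E$.

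For the second assertion I would first compute that every simple object of $\coh_0\X$ has rank $0$: applying $\opname{rk}$ to the defining short exact sequences \eqref{e:ordinary-simple} and \eqref{e:exceptional-simple} gives $\opname{rk}(S_\mu)=\opname{rk}(\mathcal{O}(\vec{c}))-\opname{rk}(\mathcal{O})=0$, and likewise $\opname{rk}(S_{i,j})=0$. Because $\coh_0\X$ is abelian and every object has a finite composition series with these simple factors, additivity of $\opname{rk}$ forces $\opname{rk}(F)=0$ for all $F\in\coh_0\X$. Conversely, writing $E=V\oplus F$ with $V\in\vect\X$ and $F\in\coh_0\X$ gives $\opname{rk}(E)=\opname{rk}(V)$, while a filtration of $V$ by $n$ line bundles yields $\opname{rk}(V)=n\geq 0$, with $n=0$ precisely when $V=0$; hence $\opname{rk}(E)=0$ forces $V=0$, i.e. $E\in\coh_0\X$. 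The third assertion then follows at once: a line bundle $\mathcal{O}(\vec{x})$ has rank $1$ by definition, and if $E$ is indecomposable with $\opname{rk}(E)=1$, then $E\notin\coh_0\X$ by the second assertion, so indecomposability forces $E$ to be an indecomposable vector bundle whose line-bundle filtration has length $n=\opname{rk}(E)=1$, making $E$ itself a line bundle.

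None of the steps presents a genuine obstacle; the one point requiring care is the observation underlying the converse directions, namely that the rank of a vector bundle equals the number of line-bundle factors in any of its filtrations and is consequently strictly positive unless the bundle vanishes. This is exactly what allows $\opname{rk}$ both to detect the torsion subcategory $\coh_0\X$ and to single out the line bundles among the indecomposable objects.
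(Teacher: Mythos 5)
Your proof is correct, but it is worth noting that the paper does not actually prove this lemma at all: it simply cites \cite[Corollary 1.8.2]{GL1} and Section 2.3 of \cite{LP}. What you have supplied is a self-contained verification using only the structural facts already recalled in the paper, and all three steps check out. The first assertion is exactly the right argument: the grading shift is an exact autoequivalence, so it induces an automorphism of $\go(\X)$, and since $\opname{rk}$ takes the value $1$ on every class $[\mathcal{O}(\vec{y})]$ with $0\leq\vec{y}\leq\vec{c}$ (a basis of $\go(\X)$), the two linear functionals $\opname{rk}$ and $\opname{rk}\circ\sigma_{\vec{x}}$ coincide. For the second assertion, your computation that every simple object of $\coh_0\X$ has rank $0$ via the short exact sequences (\ref{e:ordinary-simple}) and (\ref{e:exceptional-simple}), together with additivity along composition series, is the standard argument; the converse correctly exploits the decomposition $E=V\oplus F$ and the fact that a nonzero vector bundle has a line-bundle filtration of positive length, whose length is forced to equal $\opname{rk}(V)$ by additivity in the Grothendieck group. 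The third assertion then falls out as you say. The only implicit input is the consistency of the definition of $\opname{rk}$ (that assigning the value $1$ to \emph{all} line-bundle classes is compatible with the relations in $\go(\X)$), but this is built into the paper's definition of the rank function and so is legitimately assumed. In short: where the paper outsources the lemma to Geigle--Lenzing, you have given a correct elementary derivation from the recalled facts; nothing is missing.
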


\subsection{Reachability of $\coh\X$}~\label{ss:reachable-x}
Let $\X$ be a weighted projective line and $\coh\X$ the category of coherent sheaves over $\X$. Recall that $\coh\X$ does not contain nonzero projective objects.
In contrast to the category $\mod H$ of a finite dimensional hereditary algebra $H$, the reachable property holds true for $\coh\X$.
We begin with the following result of H\"{u}bner~\cite{Hu}.

\begin{lemma}~\label{l:special-tilting}
Let $E$ be an exceptional vector bundle of $\coh\X$. Then the right perpendicular category $E^\perp$ is equivalent to the module category $\mod H$ of a finite dimensional hereditary algebra $H$. Moreover, $H\oplus E$ is a tilting object of $\coh\X$.
\end{lemma}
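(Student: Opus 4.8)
The plan is to obtain the lemma by combining the general facts about perpendicular categories recorded in Proposition~\ref{p:perpendicular-category} with the dichotomy of Proposition~\ref{p:structure}, so that the only substantial point becomes the production of a nonzero projective object in $E^\perp$.

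I would begin with the structural input. By Proposition~\ref{p:perpendicular-category}(2), $E^\perp$ is a hereditary abelian category, and since $\coh\X$ contains no nonzero projective objects, Proposition~\ref{p:perpendicular-category}(3) yields a tilting object $T$ of $E^\perp$ with $T\oplus E$ a tilting object of $\coh\X$. In particular $E^\perp$ has tilting objects, so each connected component is a connected hereditary abelian category with tilting objects. By Proposition~\ref{p:structure}, such a component is equivalent to $\mod H_i$ for a connected hereditary algebra $H_i$ exactly when it contains a nonzero projective object, and otherwise its Auslander--Reiten translation is an autoequivalence. Hence proving $E^\perp\simeq\mod H$ amounts to exhibiting a nonzero projective object in $E^\perp$ (and, for the clean statement, to H\"{u}bner's fact that $E^\perp$ is connected).

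The next step is to translate projectivity into a vanishing of $\Hom$-spaces. Recall that $E^\perp$ is closed under extensions in $\coh\X$, so that $\Ext^1_{E^\perp}(X,Y)=\Ext^1_{\X}(X,Y)$ for $X,Y\in E^\perp$; by the Serre duality~\eqref{e:serre-duality} this equals $\D\Hom_{\X}(Y,X(\vec\omega))$. Therefore an object $P\in E^\perp$ is projective in $E^\perp$ if and only if $\Hom_{\X}(Y,P(\vec\omega))=0$ for every $Y\in E^\perp$. The strategy is then to find an object $P\in E^\perp$ extremal for the twist order whose translate $P(\vec\omega)$ receives no nonzero morphism from $E^\perp$. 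This is where the hypothesis that $E$ is a \emph{vector bundle} enters: the conditions $\Hom_{\X}(E,Y)=0=\Ext^1_{\X}(E,Y)$ confine the line bundles of $E^\perp$, and hence via their filtrations all of its objects, to a bounded window of twists, and the bottom of this window supplies the desired projective. Were $E$ instead a finite length sheaf, all sufficiently negative line bundles $\mathcal{O}(\vec x)$ would still lie in $E^\perp$, no extremal object would exist, and indeed $E^\perp$ would be of the form $\coh\X'$ with one weight removed.

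I expect the main obstacle to be precisely this last point---controlling the line bundles contained in $E^\perp$ and verifying projectivity of the extremal one---which is the heart of H\"{u}bner's perpendicular calculus~\cite{Hu} and rests on rank and degree estimates for exceptional bundles together with the filtration of vector bundles by line bundles. Granting $E^\perp\simeq\mod H$, I would settle the ``moreover'' by taking $H$ to be the projective generator of $E^\perp$. Then $\Ext^1_{\X}(H,H)=\Ext^1_{E^\perp}(H,H)=0$, while $\Hom_{\X}(E,H)=0=\Ext^1_{\X}(E,H)$ because $H\in E^\perp$; the remaining vanishing $\Ext^1_{\X}(H,E)\cong\D\Hom_{\X}(E,H(\vec\omega))=0$ follows from the same degree analysis that made $H$ projective, so $H\oplus E$ is rigid. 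Finally $|H\oplus E|=\opname{rank}\go(E^\perp)+1=\opname{rank}\go(\coh\X)$ by Theorem~\ref{t:reduction-derived-category}, whence $H\oplus E$ is a tilting object of $\coh\X$ by Lemma~\ref{l:tilting-object}.
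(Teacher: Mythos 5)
The paper does not actually prove this lemma: it is quoted as a result of H\"ubner's thesis \cite{Hu}, so there is no internal argument to compare yours against. Read as a standalone proof, your proposal has a genuine gap at exactly the point you flag yourself. The reduction is fine --- Proposition~\ref{p:perpendicular-category} makes $E^\perp$ a hereditary abelian category with tilting objects, and by Proposition~\ref{p:structure}(3) it suffices to produce a nonzero projective object in each connected component of $E^\perp$ --- but producing such a projective is the entire content of the lemma, and you supply only the heuristic that the line bundles lying in $E^\perp$ occupy a bounded window of twists. That heuristic does not by itself yield a projective: the indecomposable projectives of $E^\perp$ need not be line bundles of $\coh\X$ (they are typically higher-rank bundles), and an object extremal for the twist order need not be projective without the rank/degree estimates of H\"ubner's perpendicular calculus, which is precisely what you defer to \cite{Hu}. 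So the decisive step is cited rather than proved --- consistent with the paper's own treatment, but not a proof.

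A second, more localized gap sits in the ``moreover''. You claim that $\Ext^1_\X(H,E)\cong\D\Hom_\X(E,H(\vec\omega))=0$ ``follows from the same degree analysis that made $H$ projective''. It does not: projectivity of $H$ in $E^\perp$ gives $\Hom_\X(Y,H(\vec\omega))=0$ only for $Y\in E^\perp$, and $E\notin E^\perp$, so this vanishing is an additional assertion requiring its own argument. A correct route is available from the paper's own ingredients: Proposition~\ref{p:perpendicular-category}(3) provides a tilting object $T$ of $E^\perp$ with $T\oplus E$ tilting in $\coh\X$; once $E^\perp\simeq\mod H$ is known, $T$ is a tilting $H$-module, so there is an exact sequence $0\to H\to T^0\to T^1\to 0$ with $T^i\in\add T$, and applying $\Hom_\X(-,E)$ together with the vanishing of $\Ext^2$ in the hereditary category $\coh\X$ yields $\Ext^1_\X(H,E)=0$ from $\Ext^1_\X(T^0,E)=0$. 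The remaining checks ($H\in E^\perp$ is rigid, and $|H\oplus E|=\rank\go(\coh\X)$, so Lemma~\ref{l:tilting-object} applies) are as you state.
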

\begin{lemma}~\label{l:cogenerated}
Let $E$ be an exceptional vector bundle of $\coh\X$ and $H$ the hereditary algebra such that $\mod H\cong E^\perp$. Then each indecomposable projective $H$-module is cogenerated by $E$ in $\coh\X$. 
\end{lemma}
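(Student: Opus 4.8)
The plan is to work entirely inside $\coh\X$ and to translate both ``projective in $E^\perp$'' and ``cogenerated by $E$'' into vanishing conditions that can be matched up through Serre duality. By Lemma~\ref{l:special-tilting} the object $H$ appearing there is a tilting object of $E^\perp$ whose indecomposable summands are precisely the objects of $\coh\X$ corresponding, under $E^\perp\cong\mod H$, to the indecomposable projective $H$-modules; fix one such summand $P$. Since $H\oplus E$ is a tilting object of $\coh\X$, the pair $P\oplus E$ is rigid, so $\Ext^1_\X(P,E)=0$ and $P\in E^\perp$. Recall that $P$ being cogenerated by $E$ means exactly that the canonical morphism $P\to E^{\,\dim_k\Hom_\X(P,E)}$ is a monomorphism, equivalently that the largest subobject $U\subseteq P$ with $\Hom_\X(U,E)=0$ vanishes. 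First I would reduce the whole statement to proving $U=0$.

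Next I would reformulate projectivity. Because $E^\perp$ is an exact abelian subcategory closed under extensions (Proposition~\ref{p:perpendicular-category}$(2)$), $P$ is projective in $E^\perp=\mod H$ if and only if $\Ext^1_\X(P,Y)=0$ for every $Y\in E^\perp$. Using the Serre duality~\eqref{e:serre-duality} this becomes the condition
\[\Hom_\X(Y,\tau P)=0\quad\text{for all }Y\in E^\perp,\]
where $\tau$ denotes the grading shift by $\vec{\omega}$, an autoequivalence of $\coh\X$ because $\coh\X$ has no nonzero projective objects. This is the form of projectivity I intend to exploit.

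Now suppose $U\neq 0$ and consider the short exact sequence $0\to U\to P\to P/U\to 0$. The key step is to show that $\tau U$ lies in $E^\perp$. One half is immediate: $\Hom_\X(U,E)=0$ gives, via~\eqref{e:serre-duality} together with the fact that $\tau$ is an equivalence, $\Ext^1_\X(E,\tau U)=0$. For the other half one computes $\Hom_\X(E,\tau U)\cong\D\Ext^1_\X(U,E)$, so I must check that $\Ext^1_\X(U,E)=0$; this follows by applying $\Hom_\X(-,E)$ to the short exact sequence, since $\coh\X$ is hereditary (so $\Ext^2$ vanishes) and $\Ext^1_\X(P,E)=0$, which forces $\Ext^1_\X(U,E)$, being a quotient of $\Ext^1_\X(P,E)=0$, to vanish. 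Hence $\tau U\in E^\perp$. Feeding $Y=\tau U$ into the projectivity condition yields $\Hom_\X(\tau U,\tau P)=0$, and applying the inverse equivalence $\tau^{-1}$ gives $\Hom_\X(U,P)=0$. This contradicts the nonzero inclusion $U\hookrightarrow P$, so $U=0$ and $P$ embeds into a power of $E$.

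The main obstacle is the middle step: recognising that the object to test against the projectivity condition is $\tau U$ rather than $U$ itself, and then verifying $\tau U\in E^\perp$. The delicate point there is the vanishing $\Ext^1_\X(U,E)=0$, which is precisely where heredity of $\coh\X$ and the rigidity of the tilting object $H\oplus E$ both enter. Once the short exact sequence is in place, the remaining manipulations are routine bookkeeping with Serre duality and the equivalence $\tau$.
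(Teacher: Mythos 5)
Your argument is correct, but it is genuinely different from the one in the paper. The paper first reduces to the indecomposable projective $H$-modules $P$ sitting at sink vertices of the quiver of $H$ (using that submodule inclusions in $E^\perp$ remain monomorphisms in $\coh\X$), and then invokes Lemma~\ref{l:almost-tilting}: writing the tilting object $E\oplus H$ as $P\oplus\overline{T}$, it shows that the exchange sequence between the two complements of $\overline{T}$ must run $0\to P\to B\to P^*\to 0$ (the other direction would be a non-split sequence inside $E^\perp$ ending in the projective $P$), and the sink condition forces $B\in\add E$. Your proof instead works homologically for an arbitrary indecomposable projective: you encode projectivity of $P$ in $E^\perp$ as $\Hom_\X(Y,\tau P)=0$ for all $Y\in E^\perp$ via Serre duality, show that $\tau U$ lies in $E^\perp$ for the maximal subobject $U\subseteq P$ with $\Hom_\X(U,E)=0$ (using heredity and $\Ext^1_\X(P,E)=0$, which holds because $E\oplus H$ is tilting), and derive the contradiction $\Hom_\X(U,P)=0$. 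This avoids both the sink-vertex reduction and the two-complements machinery, at the price of one step you state but do not justify: that $U=0$ implies the canonical map $P\to E^{\dim_k\Hom_\X(P,E)}$ is injective. That implication needs the observation that its kernel $K$ satisfies $\Hom_\X(K,E)=0$, which follows because $\Ext^1_\X(P/K,E)=0$ (as $P/K$ embeds in a power of $E$, $\Ext^1_\X(E,E)=0$ and $\coh\X$ is hereditary), so every morphism $K\to E$ extends to $P$ and hence vanishes on $K$; you should make this explicit, but it uses only the tools already in play. With that addendum your proof is complete and arguably more uniform than the paper's.
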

\begin{proof}
By Lemma~\ref{l:special-tilting}, we know that $T:=E\oplus H$ is a tilting object of $\coh\X$.  Let $H=kQ$ for some acyclic quiver $Q$. Since an injective morphism in $E^\perp$ is injective in $\coh\X$ and each indecomposable projective $H$-module is a submodule of an indecomposable projective $H$-module associated with a sink vertex of $Q$. It suffices to prove the statement for an indecomposable projective $H$-module, say $P$, associated with a sink vertex of $Q$.
 Note that we have $\Hom_H(P,Q)=0$ for any indecomposable projective $H$-module such that $Q\not\cong P$.
We write $T=P\oplus \overline{T}$ and denote by $P^*$ the other complement of $\overline{T}$ ({\it cf.} Lemma~\ref{l:almost-tilting}). 

We claim that $P$ and $P^*$ fit into the following short exact sequence in $\coh\X$
\[0\to P\xrightarrow{f} B\to P^*\to 0,
\]
where $B\in \add \overline{T}$. Otherwise, by Lemma~\ref{l:almost-tilting}, there is a short exact sequence in $\coh\X$
\begin{eqnarray}~\label{e:exchange}
0\to P^*\to B'\xrightarrow{g} P\to 0,
\end{eqnarray}
where $B'\in \add \overline{T}$. It turns out that $g$ is a minimal right $\add \overline{T}$-approximation. As $P\in E^\perp$, we deduce that $E\not\in \add B'$ and hence $B'\in E^\perp$. By applying $\Hom_\X(E,-)$ to the exact sequence~(\ref{e:exchange}), we conclude that $P^*\in E^{\perp}$. In particular, ~(\ref{e:exchange}) is an exact sequence of $\mod H=E^\perp$.
Since $P$ is projective in $\mod H$, the short exact sequence (\ref{e:exchange}) is split, which contradicts to $P\not\in \add B'$. This proves the claim.  By the assumption on $P$, we clearly know that $B=E^{\oplus s}$ for some positive integer $s$. In particular, $P$ is cogenerated by $E$ in $\coh\X$. 

\end{proof}

\begin{lemma}~\label{l:reach-line}
Each line bundle $L$ is reachable by $\mathcal{O}$.
\end{lemma}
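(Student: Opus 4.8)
The plan is to reduce the statement to an explicit combinatorial computation inside the grading group $\mathbb{L}$, exploiting the fact that line bundles are exceptional objects and that a pair of line bundles is rigid precisely when their gradings differ by an element lying between $-\vec{c}$ and $\vec{c}$ for the partial order on $\mathbb{L}$.

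First I would record the rigidity criterion for pairs of line bundles. For $\vec{x}, \vec{y} \in \mathbb{L}$, the Hom formula \eqref{e:hom} gives $\Hom_\X(\mathcal{O}(\vec{x}), \mathcal{O}(\vec{y})) = R_{\vec{y} - \vec{x}}$, and combining Serre duality \eqref{e:serre-duality} with \eqref{e:hom} yields
\[
\D\Ext^1_\X(\mathcal{O}(\vec{x}), \mathcal{O}(\vec{y})) = \Hom_\X(\mathcal{O}(\vec{y}), \mathcal{O}(\vec{x} + \vec{\omega})) = R_{\vec{x} + \vec{\omega} - \vec{y}}.
\]
Since $R_{\vec{z}} \neq 0$ if and only if $\vec{z} \geq 0$, and since the dichotomy on $\mathbb{L}$ asserts that $\vec{z} \geq 0$ holds exactly when $\vec{z} \leq \vec{c} + \vec{\omega}$ fails, I conclude that $\Ext^1_\X(\mathcal{O}(\vec{x}), \mathcal{O}(\vec{y})) = 0$ if and only if $\vec{x} - \vec{y} \leq \vec{c}$. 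Applying this in both directions, $\mathcal{O}(\vec{x}) \oplus \mathcal{O}(\vec{y})$ is rigid if and only if $\vec{x} - \vec{y} \leq \vec{c}$ and $\vec{y} - \vec{x} \leq \vec{c}$.

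Next I would build the chain. Any line bundle is isomorphic to $\mathcal{O}(\vec{x})$ for a unique $\vec{x} \in \mathbb{L}$, which I write in normal form $\vec{x} = \sum_{i=1}^t l_i \vec{x}_i + l\vec{c}$. The idea is to pass from $\mathcal{O}(\vec{x})$ to $\mathcal{O} = \mathcal{O}(0)$ one generator at a time: successively subtract each $\vec{x}_i$ (with multiplicity $l_i$) and add or subtract $\vec{c}$ (with multiplicity $|l|$), producing a finite sequence of line bundles $\mathcal{O}(\vec{x}) = \mathcal{O}(\vec{z}_0), \mathcal{O}(\vec{z}_1), \ldots, \mathcal{O}(\vec{z}_s) = \mathcal{O}$ in which consecutive gradings differ by a single generator $\pm \vec{x}_i$ or $\pm \vec{c}$. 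Since $0 \leq \vec{x}_i \leq \vec{c}$ for every $i$, each such difference $\vec{d}$ satisfies both $\vec{d} \leq \vec{c}$ and $-\vec{d} \leq \vec{c}$; by the criterion above every consecutive pair $\mathcal{O}(\vec{z}_{j}) \oplus \mathcal{O}(\vec{z}_{j+1})$ is rigid. As each $\mathcal{O}(\vec{z}_j)$ is a line bundle, hence an exceptional (indecomposable rigid) object, this chain exhibits $L$ as reachable by $\mathcal{O}$ in the sense of Definition~\ref{d:reachable}.

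The computation is short, so I do not expect a serious obstacle; the only point requiring care is the translation of the rigidity condition into the partial order on $\mathbb{L}$, where one must correctly invoke the dichotomy ``$0 \leq \vec{z}$ or $\vec{z} \leq \vec{c} + \vec{\omega}$'' together with the vanishing criterion $R_{\vec{z}} \neq 0 \Leftrightarrow \vec{z} \geq 0$ to pin down exactly when the two relevant graded pieces vanish.
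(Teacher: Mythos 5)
Your proof is correct, and in spirit it is the same as the paper's: write $L=\mathcal{O}(\vec{x})$ in normal form and join it to $\mathcal{O}$ by an explicit chain of line bundles, checking rigidity of each consecutive pair via the Hom formula \eqref{e:hom} and Serre duality \eqref{e:serre-duality}. The difference lies in the granularity of the chain, and your finer version is in fact the safer one. The paper descends from $\vec{x}$ to $\sum_i l_i\vec{x}_i$ by steps of $\vec{c}$ and then passes from $\mathcal{O}(\sum_i l_i\vec{x}_i)$ to $\mathcal{O}$ in a single step; but the pair $\mathcal{O}(\sum_i l_i\vec{x}_i)\oplus\mathcal{O}$ need not be rigid when two or more of the $l_i$ are positive. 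For instance, for $\X$ of type $(2,2,2,2)$ one has $\vec{x}_1+\vec{x}_2+\vec{\omega}=2\vec{c}-\vec{x}_3-\vec{x}_4=\vec{x}_3+\vec{x}_4\geq 0$, so $\Ext^1_\X(\mathcal{O}(\vec{x}_1+\vec{x}_2),\mathcal{O})\cong \D R_{\vec{x}_3+\vec{x}_4}\neq 0$. Your chain changes the grading by a single generator $\pm\vec{x}_i$ or $\pm\vec{c}$ at each step; every such difference $\vec{d}$ does satisfy both $\vec{d}\leq\vec{c}$ and $-\vec{d}\leq\vec{c}$, so every consecutive pair is rigid, and all intermediate objects are line bundles, hence exceptional. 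So your argument closes a gap that the coarser chain leaves open.

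One caveat on conventions: the equivalence ``$\vec{x}+\vec{\omega}-\vec{y}\leq\vec{c}+\vec{\omega}$ iff $\vec{x}-\vec{y}\leq\vec{c}$'' and the inequality $\vec{x}_i\leq\vec{c}$ both use translation invariance of $\leq$, i.e.\ the standard Geigle--Lenzing order in which $\vec{a}\leq\vec{b}$ means $\vec{b}-\vec{a}\in\sum_i\mathbb{N}\vec{x}_i$. The componentwise comparison of normal forms, taken literally, is not translation invariant (under it $\vec{x}_i\not\leq\vec{c}$, even though $\vec{c}-\vec{x}_i=(p_i-1)\vec{x}_i\geq 0$), so you should state explicitly that you work with the standard order; with that understanding the argument is complete.
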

\begin{proof}
Let $L=\mathcal{O}(\vec{x})$ with $\vec{x}=\sum_{i=1}^tl_i\vec{x}_i+l\vec{c}$, where $0\leq l_i\leq p_i-1$ and $l\in \Z$. Without loss of generality, let us assume that $0\leq l$. By the Serre duality~(\ref{e:serre-duality}) and ~(\ref{e:hom}), one can show that $\mathcal{O}(\vec{x})\oplus \mathcal{O}(\vec{x}-\vec{c}), \mathcal{O}(\vec{x}-\vec{c})\oplus \mathcal{O}(\vec{x}-\vec{2c}),\cdots, \mathcal{O}(\sum_{i=1}^tl_i\vec{x}_i+\vec{c})\oplus \mathcal{O}(\sum_{i=1}^tl_i\vec{x}_i)$ and $\mathcal{O}(\sum_{i=1}^tl_i\vec{x}_i)\oplus \mathcal{O}$ are rigid. Hence $L$ is reachable by $\mathcal{O}$ by definition.
\end{proof}
\begin{lemma}~\label{l:reach-finite}
Each exceptional coherent sheaf of finite length is reachable by $\mathcal{O}$.
\end{lemma}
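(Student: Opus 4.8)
The plan is to connect an arbitrary exceptional finite-length sheaf $M$ to $\mathcal{O}$ through two successive reductions: first move down the tube containing $M$ to a simple exceptional sheaf, then cross from that simple sheaf to a suitable line bundle, after which Lemma~\ref{l:reach-line} finishes the job. Since reachability is an equivalence relation, it suffices to exhibit each of these links separately and then chain them by transitivity.

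First I would locate $M$. By Proposition~\ref{p:structure-finite} the sheaf $M$ lies in a standard tube $\mt_\mu$, and by Lemma~\ref{l:hom-tube}(1) an indecomposable object of a rank-$d$ tube can be rigid only if its length is strictly less than $d$; in a rank-one tube this is impossible, so necessarily $\mu=\lambda_i$ for some $i$ with $p_i\geq 2$ and $M\in\mt_{\lambda_i}$ with $l(M)<p_i$. Writing $N=\operatorname{soc}(M)$, which is an exceptional simple sheaf $S_{i,j}$, Lemma~\ref{l:hom-tube}(2) gives that $M\oplus N$ is rigid, so $M$ is reachable by $S_{i,j}$ in a single step.

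The core of the argument is the second link, namely that $S_{i,j}$ is reachable by a line bundle. I claim the correct choice is $L:=\mathcal{O}((j-1)\vec{x}_i)$ (indices taken mod $p_i$) and that $S_{i,j}\oplus L$ is already rigid. One of the two extension groups vanishes for free: by Serre duality (\ref{e:serre-duality}), $\Ext^1_\X(L,S_{i,j})\cong\D\Hom_\X(S_{i,j},L(\vec{\omega}))=0$, since $S_{i,j}$ is torsion while $L(\vec{\omega})$ is a vector bundle. For the other, Serre duality together with $\tau S_{i,j}=S_{i,j-1}$ identifies $\Ext^1_\X(S_{i,j},L)$ with $\D\Hom_\X(\mathcal{O}((j-1)\vec{x}_i),S_{i,j-1})$, so everything reduces to the vanishing $\Hom_\X(\mathcal{O}(m\vec{x}_i),S_{i,m})=0$ with $m=j-1$. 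This I would prove by applying $\Hom_\X(\mathcal{O}(m\vec{x}_i),-)$ to the defining sequence (\ref{e:exceptional-simple}) of $S_{i,m}$: using the hom-formula, the first map is $R_0\xrightarrow{X_i}R_{\vec{x}_i}$, an isomorphism, while the obstruction term $\Ext^1_\X(\mathcal{O}(m\vec{x}_i),\mathcal{O}(m\vec{x}_i))\cong\D R_{\vec{\omega}}$ vanishes because $\mathcal{O}(m\vec{x}_i)$ is exceptional; the long exact sequence then squeezes the middle $\Hom$ to zero. This Hom-vanishing — and in particular pinning down the correct twist $(j-1)\vec{x}_i$ rather than $j\vec{x}_i$, for which the defining sequence of $S_{i,j}$ itself produces a nonzero element of $\Ext^1_\X(S_{i,j},\mathcal{O}(j\vec{x}_i))$ — is the step I expect to demand the most care.

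Finally, Lemma~\ref{l:reach-line} shows that $L=\mathcal{O}((j-1)\vec{x}_i)$ is reachable by $\mathcal{O}$. Chaining the three links $M\leadsto S_{i,j}\leadsto L\leadsto\mathcal{O}$ by transitivity of the reachability relation then yields that $M$ is reachable by $\mathcal{O}$, as required.
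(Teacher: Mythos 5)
Your argument is correct and takes essentially the same route as the paper: reduce the exceptional torsion sheaf to its socle via Lemma~\ref{l:hom-tube}(2), show the exceptional simple $S_{i,j}$ forms a rigid pair with a suitable line bundle, and conclude with Lemma~\ref{l:reach-line}. The only difference is in the middle step and is cosmetic: the paper verifies directly that $\mathcal{O}\oplus S_{i,1}$ is rigid and then applies the autoequivalence $\tau^{-(j-1)}$ to obtain the rigid pair $S_{i,j}\oplus\mathcal{O}(-(j-1)\vec{\omega})$, whereas you compute directly that $S_{i,j}\oplus\mathcal{O}((j-1)\vec{x}_i)$ is rigid; both computations are valid.
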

\begin{proof}
Let $M$ be an exceptional coherent sheaf of finite length. In particular, there is a $\lambda_i\in \boldsymbol{\lambda}$ such that $M\in \mt_{\lambda_i}$. Moreover, $l(M)\leq p_i-1$.
According to Lemma~\ref{l:hom-tube}~$(2)$, each exceptional sheaf in $\mt_{\lambda_i}$ is reachable by its socle. It suffices to show that each simple sheave in $\mt_{\lambda_i}$ is reachable by $\mathcal{O}$.

 Applying the functor $\Hom_{\X}(-, \mathcal{O})$ to the exact sequence~(\ref{e:exceptional-simple}) of $S_{i,1}$,  we conclude that $\Ext^1_\X(S_{i,1}, \mathcal{O})=0$ by ~(\ref{e:hom}) and the Serre duality ~(\ref{e:serre-duality}). On the other hand, 
\[\Ext^1_\X(\mathcal{O}, S_{i,1})=\D\Hom_\X(S_{i,1}, \mathcal{O}(\vec{\omega}))=0,
\]
since there is no nonzero morphism from $\coh_0\X$ to $\vect\X$. Consequently, $\Ext^1_\X(\mathcal{O}\oplus S_{i,1}, \mathcal{O}\oplus S_{i,1})=0$ and hence $S_{i,1}$ is reachable by $\mathcal{O}$.

For the simple sheave $S_{i,j}$ of $\mt_{\lambda_i}$, we have $S_{i,j}\cong \tau^{-(j-1)}S_{i,1}$. Note that as $\tau$ is an equivalence of $\coh\X$, we conclude that $S_{i,j}\oplus \tau^{-(j-1)}\mathcal{O}=S_{ij}\oplus \mathcal{O}(-(j-1)\vec{\omega})$ is rigid. By definition, $S_{i,j}$ is reachable by the line bundle $\mathcal{O}(-(j-1)\vec{\omega})$. Now the result follows from Lemma~\ref{l:reach-line}.
\end{proof}

The following is the main result of this subsection, which plays an important role in the proof of Theorem~\ref{t:main-2}.
\begin{proposition}~\label{l:reachable-x}
Let $M$ and $N$ be arbitrary exceptional objects of $\coh\X$. Then $M$ is reachable by $N$.
\end{proposition}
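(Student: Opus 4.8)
```latex
The plan is to establish a transitive chain reducing the general statement to the
two special cases already treated in Lemma~\ref{l:reach-line} and
Lemma~\ref{l:reach-finite}. Since reachability is an equivalence relation on the
set of isomorphism classes of exceptional objects of $\coh\X$, it suffices to
prove that \emph{every} exceptional object $M$ is reachable by the structure
sheaf $\mathcal{O}$; transitivity then yields that any two exceptional objects are
reachable by each other. By Lemma~\ref{l:rank} every indecomposable coherent
sheaf has nonnegative rank and the rank detects whether $M$ lies in $\coh_0\X$
(rank $0$) or is a vector bundle (rank $\geq 1$). Thus I would split the argument
according to the rank of $M$: the finite-length case is exactly
Lemma~\ref{l:reach-finite}, the line-bundle case ($\operatorname{rk} M=1$) is
Lemma~\ref{l:reach-line}, and the remaining work is to handle an exceptional
\emph{vector bundle} of rank at least $2$.

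For the main case, let $E:=M$ be an exceptional vector bundle with
$\operatorname{rk}(E)\geq 2$. The key tool is H\"{u}bner's
Lemma~\ref{l:special-tilting}: the right perpendicular category $E^\perp$ is
equivalent to $\mod H$ for a finite dimensional hereditary algebra $H$, and
$E\oplus H$ is a tilting object of $\coh\X$. First I would invoke
Lemma~\ref{l:cogenerated}, which produces an indecomposable projective $H$-module
$P$ (attached to a sink of the quiver $Q$ with $H=kQ$) that is cogenerated by $E$:
there is a short exact sequence
\begin{eqnarray*}
0\to P\to E^{\oplus s}\to P^*\to 0
\end{eqnarray*}
in $\coh\X$ with $P^*$ the other complement. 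The strategy is then to exhibit a
reachability chain from $E$ to a line bundle, after which
Lemma~\ref{l:reach-line} transports us the rest of the way to $\mathcal{O}$. The
natural intermediate step is to show $E$ is reachable by the projective $P$ viewed
inside $\coh\X$, using the rigidity of $E\oplus P$ (which holds since
$P\in E^\perp$ forces $\operatorname{Ext}^1_\X(E,P)=0$, and
$\operatorname{Ext}^1_\X(P,E)=0$ follows from $P$ being cogenerated by $E$ together
with Serre duality~(\ref{e:serre-duality})). One then walks down within the tilting
object $E\oplus H$ toward an exceptional object of strictly smaller rank.

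I expect the genuine obstacle to be the inductive control of the rank. The clean
way to organize it is induction on $\operatorname{rk}(E)$: having landed (via the
perpendicular category $E^\perp\cong \mod H$) on a hereditary algebra of smaller
Grothendieck rank, I would use the connectedness of tilting/cluster-tilting data
for $\mod H$ together with the reachability already available there to descend to
a line bundle or a finite-length sheaf, both of which are reachable by
$\mathcal{O}$. The subtle point is that reachability in $E^\perp$ must be promoted
to reachability in $\coh\X$: an exact sequence witnessing rigidity inside
$E^\perp=\mod H$ need not remain the relevant one in $\coh\X$, so I must verify
that the rigid pairs $M_i\oplus M_{i+1}$ produced in $\mod H$ stay rigid after the
embedding $E^\perp\hookrightarrow\coh\X$ (which is exact and fully faithful, so
$\operatorname{Ext}^1$ is preserved), and that the connecting object linking the
$E^\perp$-chain to $E$ itself is rigid in $\coh\X$. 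Assembling these pieces---the
rank reduction through $E^\perp$, the projective cogenerated by $E$ from
Lemma~\ref{l:cogenerated}, and the two base cases---gives a single reachability
class containing $\mathcal{O}$, completing the proof.
```
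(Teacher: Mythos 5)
Your overall architecture matches the paper's: reduce to reachability by $\mathcal{O}$, induct on the rank function, dispose of the rank $0$ and rank $1$ cases by Lemmas~\ref{l:reach-finite} and \ref{l:reach-line}, and for an exceptional vector bundle $E$ of higher rank pass to $E^\perp\cong\mod H$ and use Lemma~\ref{l:cogenerated} to produce a projective $P$ cogenerated by $E$, so that $E$ is reachable by $P$. But there is a genuine gap exactly where you say you ``expect the genuine obstacle'' to be: you never resolve it. The induction is on the sheaf-theoretic rank $\operatorname{rk}$, not on the rank of the Grothendieck group, and there is no reason why the projective $P\in E^\perp$ should satisfy $\operatorname{rk}(P)<\operatorname{rk}(E)$. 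The paper explicitly isolates the case $\operatorname{rk}(P)\geq\operatorname{rk}(E)$ and spends the bulk of the proof there: when $\dim_k\Hom_\X(P,E)=1$ it builds a short exact sequence $0\to P\to E\to N\to 0$, forces $\operatorname{rk}(N)=0$ by a rank count, and uses Lemma~\ref{l:hom-vect-tube} to show every summand of $N$ lies in a tube of rank $>1$, producing a simple rigid torsion sheaf $S$ with $\Hom_\X(P,S)=0$, whence $P\oplus\tau^{-1}S$ is rigid by Serre duality and Lemma~\ref{l:reach-finite} finishes; when $\dim_k\Hom_\X(P,E)>1$ it runs an analogous argument through the Auslander--Reiten sequence ending at $E$ (which is why the type $(1,1)$ case, where the AR quiver has multiple arrows, must be split off first via Corollary~\ref{c:type1-1} --- another point your proposal omits). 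None of this escape route into $\coh_0\X$ appears in your sketch.

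Your proposed substitute --- ``use the connectedness of tilting/cluster-tilting data for $\mod H$ together with the reachability already available there'' --- does not work. The reachable property fails for module categories of hereditary algebras (the paper's own remark after Proposition~\ref{p:reachable-algebra} gives the Kronecker algebra as a counterexample); it holds only in the cluster category $\mc(\mod H)$, where the chains pass through shifted projectives $\Sigma Q$ that do not correspond to objects of $E^\perp\subseteq\coh\X$, so those chains cannot be transported into $\coh\X$. Moreover, even a chain entirely inside $E^\perp$ never leaves $E^\perp$, and $\mathcal{O}$ is generally not in $E^\perp$; the whole difficulty is to exit the perpendicular category, which is precisely what the paper's torsion-sheaf argument accomplishes and what your proposal leaves unaddressed.
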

\begin{proof}
If $\X$ is of type $(1,1)$, then $\coh\X$ is derived equivalent to the Kronecker quiver. In this case, each exceptional object is a line bundle by Corollary~\ref{c:type1-1} and the result follows from Lemma~\ref{l:reach-line}.
In the following, we assume that $\X$ is not of type $(1,1)$. 
It suffices to show that each exceptional sheaf $E$ is reachable by $\mathcal{O}$. We prove this result by induction  on the rank $\opname{rk} E$ of $E$. If $\opname{rk} E=0$, then $E\in \coh_0\X$ and the result follows from Lemma~\ref{l:reach-finite}. If $\opname{rk} E=1$, then $E$ is a line bundle by Lemma~\ref{l:rank} and the result follows from Lemma~\ref{l:reach-line}. 

Let us assume that $\opname{rk} E>1$. In particular, $E$ is an exceptional vector bundle.
Consider the right perpendicular category $E^\perp$ of $E$ in $\coh\X$. Denote by 
\[0\to \tau E\to M\to E\to 0
\]
the Auslander-Reiten sequence ending at $E$.  By the assumption that $\X$ is not of type $(1,1)$, we deduce that the Auslander-Reiten quiver of $\coh\X$ does not contain multiple arrows by Proposition~\ref{p:AR-quiver}. Consequently, $M$ is basic. Again by Proposition~\ref{p:AR-quiver}, we know that 
$M$ has at most two non-isomorphic indecomposable direct summands.

By ~\cite[Proposition 2.2]{HR}, we know that $M\in E^\perp$. Moreover, for each $Z\in E^\perp$, we have $\Hom_\X(Z,M)\cong \Hom_\X(Z,E)$.  By Lemma~\ref{l:special-tilting}, $E^\perp$ is equivalent to $\mod H$ for some finite dimensional hereditary algebra $H$ and $E\oplus H$ is a tilting object of $\coh \X$. Let $P$ be a simple projective $H$-module. According to Lemma~\ref{l:cogenerated}, $P$ is cogenerated by $E$ in $\coh\X$.  By definition, $E$ is reachable by $P$.  If $\opname{rk} P<\opname{rk} E$, by induction, $P$ is reachable by $\mathcal{O}$ and hence $E$ is reachable by $\mathcal{O}$. It remains to consider the case with $\opname{rk} P\geq \opname{rk} E$ and we break the proof into two situations: $\dim_k\Hom_\X(P,E)=1$ and $\dim_k\Hom_\X(P,E)> 1$.

\noindent{{\bf Case 1:} $\dim_k\Hom_\X(P,E)=1$}. Since $P$ is cogenerated by $E$, there is an injective morphism $f:P\to E^{\oplus s}$ for some positive integer $s$. As $\dim_k\Hom_\X(P,E)=1$, each nonzero morphism from $P$ to $E$ is injective.  Note that $P\not\cong E$, thus we can form a short exact sequence in $\coh\X$
\[0\to P\to E\to N\to 0.
\]
By definition of the rank function, we have $\opname{rk} E=\opname{rk}P+\opname{rk} N$. Since $\opname{rk} P\geq \opname{rk} E$,  we have $\opname{rk}P=\opname{rk} E$ and $\opname{rk} N=0$. In particular, $N\in \coh_0\X$. Note that we also have $\dim_k\Hom_\X(P,P)=\dim_k\Hom_H(P,P)=1$ and $\Ext^1_\X(P,P)=\Ext^1_H(P,P)=0$. By applying the functor $\Hom_\X(P,-)$ to the exact sequence, we obtain that $\Hom_\X(P,N)=0$. Note that as $\opname{rk} P=\opname{rk} E>1$, we conclude that $P$ is a vector bundle.
According to Lemma~\ref{l:hom-vect-tube}, each indecomposable direct summand of $N$ belongs to a standard tube whose 
rank is strictly greater than one. Let $M$ be an indecomposable direct summand of $N$ with socle $S_M$. It follows that $\Hom_\X(P,S_M)=0$. Consequently, $P\oplus \tau^{-1}S_M$ is rigid by the Serre duality. By Lemma~\ref{l:reach-finite}, $\tau^{-1}S_M$ is reachable by $\mathcal{O}$. Therefore $P$ and $E$ are reachable by $\mathcal{O}$ in this case.

\noindent{{\bf Case 2:} $\dim_k\Hom_\X(P,E)=s>1$.} In this case, we have $\dim_k\Hom_\X(P,M)=s$. Note that as $P$ is a simple projective object in $\mod H=E^\perp$ and $M\in E^\perp$, we have the following exact sequence 
\[0\to P^{\oplus s}\xrightarrow{\iota} M\to N\to 0
\]
in $E^\perp$. Since $M$ is basic, which has at most two non-isomorphic indecomposable direct summands, we conclude that the morphism $\iota$ is not an isomorphism. Hence $N\neq 0$.

Recall that we have assume that $\opname{rk} P\geq \opname{rk} E$. On the other hand, we have\[\opname{rk} M=\opname{rk} \tau E+\opname{rk} E=2\opname{rk} E~\text{and}~s\opname{rk} P+\opname{rk}N=\opname{rk} M.\]
 Consequently, $s=2$ and $\opname{rk} N=0$. Since $N\in E^\perp$ and $E$ is a vector bundle, we deduce that each indecomposable direct summand of $N$ belongs to a standard tube with rank $d>1$. Now similar to the Case 1, there is a simple rigid sheaf $S_N$
  such that $\Hom_\X(E, S_N)=0$. Consequently, $E\oplus \tau^{-1}S_N$ is rigid. By Lemma~\ref{l:reach-finite}, we conclude that $E$ is reachable by $\mathcal{O}$. This finishes the proof.
\end{proof}

\section{Proof of Theorem~\ref{t:main-2}}~\label{s:proof}
Let $\mh$ be a hereditary abelian category with tilting objects. If $\mh$ is not connected, then $\mh$ can be decomposed into a coproduct $\mh=\coprod_{j=1}^t\mh_j$ of finitely many connected hereditary abelian categories with tilting objects. Consequently, we have 
\[\der^b(\mh)=\coprod_{j=1}^t\der^b(\mh_j)~\text{and}~\mc(\mh)=\coprod_{j=1}^t\mc(\mh_j).
\]
It is clear that the cluster-tilting graph $\mathcal{G}_{ct}(\mc(\mh))$ is connected if $\mathcal{G}_{ct}(\mc(\mh_j))$ is connected for each $j=1,\ldots, t$. Hence the connectedness for $\mh$ in Theorem~\ref{t:main-2} is not necessary.

\subsubsection*{Proof of Theorem~\ref{t:main-2}}
We prove it by induction on the rank $n:=\rank \go(\mh)$ of the Grothendieck group $\go(\mh)$.

If $n=1$, then $\mh=\mod k$ and the result is obvious ({\it cf.} Proposition~\ref{p:BMRRT}). 

Now assume that $n>1$. According to Theorem~\ref{t:classification}, $\mh$ is either derived equivalent to $\mod H$ for a finite dimensional hereditary algebra $H$ or to $\coh\X$ for a weighted projective line $\X$.  By Proposition~\ref{p:iso-ct-graph}, the cluster-tilting graph $\mathcal{G}_{ct}(\mh)$ is invariant under derived equivalences. Therefore, without loss of generality, we may assume that $\mh=\mod H$ or $\mh=\coh\X$. If $\mh=\mod H$, then the result follows from Proposition~\ref{p:BMRRT}.

Let us consider the case $\mh=\coh\X$. According to Proposition~\ref{p:bijection-tilting-cluster-tilting}, we may identify the tilting objects of $\mh$ with the cluster-tilting objects of $\mc(\mh)$. Let $M=M_1\oplus \overline{M}$ and $N=N_1\oplus \overline{N}$ be two arbitrary basic tilting objects of $\mh$, where  $M_1$ and $N_1$ are indecomposable. We have to show that $M$ and $N$ are connected in the cluster-tilting graph $\mathcal{G}_{ct}(\mc(\mh))$. By Proposition~\ref{l:reachable-x}, there exist exceptional objects $X_0=M_1, X_1, \cdots, X_s=N_1\in \mh$ such that $X_0\oplus X_1,\cdots, X_{s-1}\oplus X_s$ are rigid in $\mh$. According to Lemma~\ref{l:partial-tilting}, each rigid object of $\mh$ is a partial tilting object. For each $0\leq i<s$, denote by $\overline{T_i}$ a complement of the partial tilting object $X_i\oplus X_{i+1}$. Set  $T_{-1}:=M, T_0:=X_0\oplus X_1\oplus \overline{T}_0, \cdots, T_{s-1}:=X_{s-1}\oplus X_s\oplus \overline{T}_{s-1}, T_s:=N$. In particular, for each $0\leq i\leq s$, $X_i$ is a common direct summand of $T_{i-1}$ and $T_i$. Let $\mathbb{S}$ be a Serre functor of $\der^b(\mh)$. Consider the subcategory 
\[\mathcal{E}_i:=\{\mathbb{S}_2^pX_i~|~p\in \Z\}
\]
of $\der^b(\mh)$, which is rigid, functorially finite and $\mathbb{S}_2$-stable.
Denote by $\der^b(\mh)_{\mathcal{E}_i}$ the Iyama-Yoshino's reduction of $\der^b(\mh)$ with respect to $\mathcal{E}_i$. By Theorem~\ref{t:reduction-derived-category}, there is a hereditary abelian category $\mh'$ such that $\der^b(\mh)_{\mathcal{E}_i}\cong \der^b(\mh')$ with $\rank \go(\mh')=\rank \go(\mh)-1$. The category $\mh'$ is not connected in general. However, it decomposes into a coproduct of connected hereditary abelian categories whose Grothendieck groups have ranks strictly less than $\rank \go(\mh)$.
 Hence, the cluster-tilting graph $\mathcal{G}_{ct}(\der^b(\mh'))\cong \mathcal{G}_{ct}(\mc(\mh'))$ is connected by induction hypothesis. Note that $\pi^{-1}(\add T_{i-1})$ and $\pi^{-1}(\add T_{i})$ are cluster-tilting subcategories of $\der^b(\mh)$ which contain the subcategory $\mathcal{E}_i$.
By Theorem~\ref{t:IY-reduction}, there is a one-to-one correspondence between the cluster-tilting subcategories of $\der^b(\mh)$ containing $\mathcal{E}_i$ and the cluster-tilting subcategories of $\der^b(\mh')$. It follows that $\pi^{-1}(\add T_{i-1})$ is connected to  $\pi^{-1}(\add T_i)$ in the cluster-tilting graph $\mathcal{G}_{ct}(\der^b(\mh))$ by definition ({\it cf.} Remark~\ref{r:orbit}). Consequently, $T_{i-1}$ and $T_i$ are connected in $\mathcal{G}_{ct}(\mc(\mh))$ for $0\leq i\leq s$. This completes the proof.

\section{Consequences}~\label{s:consequence}
\subsection{Application to cluster algebras}
Let $\mc$ be a $2$-Calabi-Yau triangulated category with cluster-tilting objects. Denote by $\Sigma$ the suspension functor of $\mc$.
Let $T=M\oplus \overline{T}$ be a basic cluster-tilting object with indecomposable direct summand $M$. It was shown in~\cite{IY} that there exists a unique object $M^*\not\cong M$ such  that $\mu_M(T):=M^*\oplus \overline{T}$ is again a cluster-tilting object of $\mc$. The cluster-tilting object $\mu_M(T)$ is called the {\it mutation of $T$ at the indecomposable direct summand $M$}. The category $\mc$ admits a {\it cluster structure}~\cite{BIRS} if 
\begin{itemize}
\item for each basic cluster-tilting object $T$, the Gabriel quiver $Q_T$ of its endomorphism algebra $\End_\mc(T)$ has no loops and no $2$-cycles;
\item if $T=M\oplus \overline{T}$ is a basic cluster-tilting object with $M$ indecomposable,  then $Q_{\mu_M(T)}$ is the Fomin-Zelevinsky's mutation~\cite{FZ} of $Q_T$ at the vertex corresponding to $M$.
\end{itemize} 
Assume that $\mc$ admits a cluster structure. For each basic cluster-tilting object $T$ of $\mc$, one may construct a cluster algebra $\mathcal{A}_T$ associated with $T$. We refer to~\cite{FZ} for the terminology of cluster algebras.
The cluster algebra $\mathcal{A}_T$ is called {\it injective-reachable}~if $T$ and $\Sigma T$ are connected in the cluster-tilting graph $\mathcal{G}_{ct}(\mc)$. Injective-reachable cluster algebras were introduced by Qin~\cite{Q} and have played an important role in his construction of triangular bases for (quantum) cluster algebras. We remark that injective-reachable cluster algebras can be defined without using categorifications and we refer to~\cite{Q} for the original definition. On the other hand, we do not have an effective method to determine whether a given cluster algebra is injective-reachable or not.

Let $\X$ be a weighted projective line and $\mc_\X:=\mc(\coh\X)$ the associated cluster category. 
It was shown in~\cite[Theorem 3.1]{BKL} that the cluster category $\mc_\X$ admits a cluster structure. In particular, for each basic cluster-tilting object $T$, one may construct a cluster algebra $\mathcal{A}_T$. The following is a direct consequence of Theorem~\ref{t:main-2} and ~\cite[Theorem 5]{DK}.
\begin{corollary}~\label{c:cor-1}
Let $\X$ be a weighted projective line and $\mc_\X$ the cluster category associated with $\X$. Let $T$ be a basic cluster-tilting object of $\mc_\X$ and $\mathcal{A}_T$ the associated cluster algebra.
\begin{itemize}
\item[(1)] There is a bijection between the set of exceptional objects of $\coh\X$ and the set of cluster variables of $\mathcal{A}_T$;
\item[(2)] The cluster-tilting graph $\mathcal{G}_{ct}(\mc(\mh))$ is isomorphic to the exchange graph of $\mathcal{A}_T$;

\item[(3)] Let $T'$ be an arbitrary basic cluster-tilting object of $\mc_\X$, the cluster algebra $\mathcal{A}_T$ is isomorphic to $\mathcal{A}_{T'}$. Hence we may denote it by $\mathcal{A}_{\X}$;
\item[(4)] The cluster algebra $\mathcal{A}_{\X}$ is injective-reachable.
\end{itemize}
\end{corollary}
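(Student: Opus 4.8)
The plan is to derive all four assertions from the connectedness of $\mathcal{G}_{ct}(\mc_\X)$ proved in Theorem~\ref{t:main-2}, combined with the cluster structure on $\mc_\X$ from \cite[Theorem 3.1]{BKL} and the injectivity of the cluster character supplied by \cite[Theorem 5]{DK}. First I would record two elementary identifications. Since $\coh\X$ contains no nonzero projective objects, the projection $\pi$ identifies the indecomposable objects of $\coh\X$ with those of $\mc_\X$, and by Lemma~\ref{l:tilting-to-cluster}$(1)$ this restricts to a bijection between the exceptional objects of $\coh\X$ and the indecomposable rigid objects of $\mc_\X$. Moreover, by Lemma~\ref{l:tilting-to-cluster}$(3)$ every indecomposable rigid object occurs as a direct summand of some basic cluster-tilting object, so by Theorem~\ref{t:main-2} every such object is \emph{reachable} from $T$ by iterated mutation.

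For $(1)$, the cluster structure attaches to each basic cluster-tilting object $T'$ a seed whose cluster variables are the values of the cluster character on the indecomposable summands of $T'$. Connectedness of $\mathcal{G}_{ct}(\mc_\X)$ makes every basic cluster-tilting object reachable from $T$, so every indecomposable rigid object of $\mc_\X$ contributes a cluster variable of $\mathcal{A}_T$; this gives surjectivity of the cluster character onto the set of cluster variables. Injectivity is exactly \cite[Theorem 5]{DK}: distinct reachable indecomposable rigid objects have distinct indices, hence distinct $g$-vectors and distinct cluster characters. Composing with the identification above yields the bijection between exceptional objects of $\coh\X$ and cluster variables of $\mathcal{A}_T$.

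Statement $(2)$ runs in parallel. The cluster structure defines a morphism of graphs from $\mathcal{G}_{ct}(\mc_\X)$ to the exchange graph of $\mathcal{A}_T$, sending a cluster-tilting object to the cluster formed by the cluster characters of its indecomposable summands and a single-summand mutation to the corresponding seed mutation; it is surjective by connectedness, and the same injectivity from \cite[Theorem 5]{DK} shows a cluster-tilting object is recovered from the set of its summands, hence from its cluster, so the morphism is bijective on vertices and matches edges with mutations, giving the asserted isomorphism. Statement $(3)$ is immediate: by Theorem~\ref{t:main-2} any two basic cluster-tilting objects $T$ and $T'$ are connected in $\mathcal{G}_{ct}(\mc_\X)$, so their seeds are mutation-equivalent and therefore generate the same cluster algebra, justifying the notation $\mathcal{A}_\X$. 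For $(4)$, since the suspension $\Sigma$ is an autoequivalence of $\mc_\X$, the object $\Sigma T$ is again a basic cluster-tilting object; connectedness of $\mathcal{G}_{ct}(\mc_\X)$ then forces $T$ and $\Sigma T$ to be connected, which is precisely the definition of injective-reachability.

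The parts $(3)$ and $(4)$ are little more than restatements of Theorem~\ref{t:main-2}. The step requiring genuine input is the injectivity used in $(1)$ and $(2)$: connectedness by itself only produces surjections from indecomposable rigid objects onto cluster variables and from cluster-tilting objects onto clusters, and it is \cite[Theorem 5]{DK} that upgrades these to bijections by separating distinct rigid objects through their indices. I expect this to be the only point needing care.
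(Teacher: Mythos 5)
Your proposal is correct and follows exactly the route the paper intends: the paper gives no written proof, stating only that the corollary is ``a direct consequence of Theorem~\ref{t:main-2} and [DK, Theorem 5]'', and your argument is precisely a careful expansion of that, combining the connectedness of $\mathcal{G}_{ct}(\mc_\X)$ with the cluster structure of \cite{BKL} and the injectivity of the index map from \cite{DK}. You correctly identify the only nontrivial input as the injectivity statement upgrading the surjections to bijections in parts $(1)$ and $(2)$.
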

\subsection{$\tau$-tilting theory}
Let $A$ be a finite dimensional algebra over $k$. Denote by $\mod A$ the category of finitely generated right $A$-modules. Let $\tau$ be the Auslander-Reiten translation of $\mod A$.  Recall that a module $M\in \mod A$ is {\it $\tau$-rigid} if $\Hom_A(M,\tau M)=0$.
A {\it $\tau$-rigid pair} is a pair $(M,P)$ with $M\in \mod A$ and $P$ a finitely generated projective $A$-module, such that $M$ is $\tau$-rigid and $\Hom_A(P,M)=0$.
A $\tau$-rigid pair is called {\it support $\tau$-tilting pair} if $|M|+|P|=|A|$.
In this case, $M$ is a {\it support $\tau$-tilting} $A$-module.
Denote by $\opname{s\tau-tilt} A$ the set of isomorphism classes of basic support $\tau$-tilting $A$-modules. It was shown in~\cite{AIR} that $\opname{s\tau-tilt} A$ admits a partial order. Denote by $\mh(\opname{s\tau-tilt} A)$ the Hasse quiver of the poset $\opname{s\tau-tilt} A$. 
The connectedness of the Hasse quiver is known for certain classes of finite dimensional algebras, see~\cite{AIR, Mi, FG18} for instance. Our main result Theorem~\ref{t:main-2} yields a new class of finite dimensional algebras whose Hasse quivers of support $\tau$-tilting modules are connected.
\begin{corollary}
Let $\X$ be a weighted projective line and $\mc_\X$ the associated cluster category.  Let $T\in \mc_\X$ be a basic cluster-tilting object. Denote by $\Lambda:=\End_{\mc_\X}(T)$ the endomorphism algebra of $T$.  Then the Hasse quiver $\mh(\opname{s\tau-tilt} \Lambda)$ is connected.
\end{corollary}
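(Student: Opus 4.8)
The plan is to transport the connectedness of the cluster-tilting graph $\mathcal{G}_{ct}(\mc_\X)$ established in Theorem~\ref{t:main-2} over to the Hasse quiver $\mh(\opname{s\tau-tilt}\Lambda)$ by invoking the $\tau$-tilting theory of Adachi, Iyama and Reiten~\cite{AIR}. The bridge is the following observation: since $\mc_\X$ is a $\Hom$-finite $2$-Calabi--Yau triangulated category with a basic cluster-tilting object $T$ and $\Lambda=\End_{\mc_\X}(T)$, the functor $\Hom_{\mc_\X}(T,-)$ induces a bijection between the set of isomorphism classes of basic cluster-tilting objects of $\mc_\X$ and the set $\opname{s\tau-tilt}\Lambda$ of isomorphism classes of basic support $\tau$-tilting $\Lambda$-modules; this is~\cite[Theorem 4.1]{AIR}. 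All hypotheses of~\cite{AIR} are satisfied here, since $\mc_\X$ is $\Hom$-finite, Krull--Schmidt, and admits cluster-tilting objects.

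First I would record that the bijection of~\cite{AIR} is compatible with mutation. Concretely, the mutation $\mu_M(U)$ of a cluster-tilting object $U=M\oplus\overline{U}$ at an indecomposable summand $M$ corresponds, under $\Hom_{\mc_\X}(T,-)$, to the mutation of the associated support $\tau$-tilting module. Because $\mc_\X$ carries a cluster structure by~\cite[Theorem 3.1]{BKL}, the edges of $\mathcal{G}_{ct}(\mc_\X)$ are exactly the single-summand exchanges of cluster-tilting objects, while the arrows of $\mh(\opname{s\tau-tilt}\Lambda)$ are, by definition of the poset structure on $\opname{s\tau-tilt}\Lambda$, governed by mutation of support $\tau$-tilting modules. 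Hence the bijection above upgrades to an isomorphism between $\mathcal{G}_{ct}(\mc_\X)$ and the underlying graph of the Hasse quiver $\mh(\opname{s\tau-tilt}\Lambda)$.

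With this identification in hand the corollary is immediate: by Theorem~\ref{t:main-2} the graph $\mathcal{G}_{ct}(\mc_\X)=\mathcal{G}_{ct}(\mc(\coh\X))$ is connected, and therefore so is $\mh(\opname{s\tau-tilt}\Lambda)$. The only point requiring genuine care, and thus the main obstacle, is the verification that the Adachi--Iyama--Reiten bijection is mutation-preserving in the present setting, that is, that an edge of $\mathcal{G}_{ct}(\mc_\X)$ is sent to an arrow of the Hasse quiver of support $\tau$-tilting modules. This is precisely where the cluster structure of~\cite{BKL} is essential, since it guarantees that the exchange of cluster-tilting objects matches the Fomin--Zelevinsky mutation of the associated Gabriel quivers, which in turn controls the poset structure on $\opname{s\tau-tilt}\Lambda$; once this compatibility is invoked, the remainder of the argument is purely formal.
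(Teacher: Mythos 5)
Your proposal is correct and follows essentially the same route as the paper, whose proof is exactly the one-line combination of \cite[Theorem 4.1]{AIR} with Theorem~\ref{t:main-2}; you have merely spelled out the mutation-compatibility of the Adachi--Iyama--Reiten bijection that the paper leaves implicit. No gaps.
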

\begin{proof}
It follows from \cite[Theorem 4.1]{AIR} and Theorem~\ref{t:main-2} directly.
\end{proof}

\end{document}